\documentclass[reqno]{amsart}

\usepackage{amsmath,amssymb,amscd, accents} \usepackage{graphicx}
\DeclareGraphicsExtensions{.eps} \usepackage{mathrsfs}
\usepackage[mathcal]{eucal}

\newtheorem{Thm}{Theorem}{\bfseries}{\itshape}
\newtheorem*{Thm*}{Theorem}{\bfseries}{\itshape}
\newtheorem{Cor}{Corollary}{\bfseries}{\itshape}
\newtheorem{Prop}[Cor]{Proposition}{\bfseries}{\itshape}
\newtheorem{Lem}[Cor]{Lemma}{\bfseries}{\itshape}
\newtheorem*{Lem*}{Lemma}{\bfseries}{\itshape}
{\bfseries}{\itshape}
{\bfseries}{\itshape}
\newtheorem{Def}[Cor]{Definition}{\bfseries}{\rmfamily}
{\scshape}{\rmfamily}
\newtheorem{Rem}[Cor]{Remark}{\scshape}{\rmfamily}
{\bfseries}{\itshape}

\renewcommand\ge{\geqslant} \renewcommand\le{\leqslant}
\let\tildeaccent=\~ \let\hataccent=\^
\renewcommand\~[1]{\widetilde{#1}}

\def\<{\left<} \def\>{\right>} \def\({\left(} \def\){\right)}

\def\abs#1{\left\vert #1 \right\vert} \def\norm#1{\left\Vert #1
  \right\Vert} 

\let\parasymbol=\S \def\secref#1{\parasymbol\ref{#1}}
 \def\pd#1#2{\tfrac{\partial#1}{\partial#2}}

\let\polishL=l \def\Zoladek.{\.Zol\c adek}

 \def\const{\operatorname{const}}
\def\codim{\operatorname{codim}} \def\Mat{\operatorname{Mat}}
 
 \def\Im{\operatorname{Im}}
 \def\dist{\operatorname{dist}}
\def\diam{\operatorname{diam}} 
\def\GL{\operatorname{GL}} \def\SL{\operatorname{SL}}
\def\PGL{\operatorname{PGL}}
\def\etc.{\emph{etc}.}

\def\:{\colon} \def\R{{\mathbb R}} \def\C{{\mathbb C}} \def\Z{{\mathbb
    Z}} \def\N{{\mathbb N}} \def\Q{{\mathbb Q}} \def\P{{\mathbb P}}
\def\H{{\mathbb H}}

\def\A{{\mathbb A}}
\def\K{{\mathbb K}}
\def\M{{\mathbb M}}
\def\B{{\mathbb B}}

\let\PolishL=\L 
\def\L{{\mathbb L}}

 \def\e{\varepsilon} \def\S{\varSigma}

\def\poly{\operatorname{poly}}

 \def\d{\,\mathrm d}

 \def\Lojas.{\PolishL ojasiewicz}
 
\def\cP{{\mathcal P}} \def\cR{{\mathcal R}}

\def\cF{{\mathcal F}} \def\cL{{\mathcal L}} \def\cR{{\mathcal R}}

\def\cB{{\mathcal B}}
\def\cM{{\mathcal M}}

\def\cW{{\mathcal W}}

\def\cO{{\mathcal O}}

\def\Aut{\operatorname{Aut}} 
\def\slope{\sphericalangle} 
 \def\mult{\operatorname{mult}}

\def\V{V}

\def\rest#1{{\vert_{#1}}}

\def\clo{\operatorname{Clo}}
\def\vol{\operatorname{vol}}

\def\vxi{{\boldsymbol\xi}}

\def\valpha{{\boldsymbol\alpha}}

\def\vx{{\mathbf x}}
\def\vy{{\mathbf y}}
\def\vz{{\mathbf z}}
\def\vw{{\mathbf w}}

\def\vc{{\mathbf c}}

\def\vs{{\mathbf s}}
\def\vv{{\mathbf v}}

\def\id{\operatorname{id}}

\def\Qa{\Q^{\text{alg}}}

\def\CD{C^D}

\def\alg{\mathrm{alg}}
\def\RE{\mathrm{RE}}
\def\trans{\mathrm{trans}}
\def\ws{\mathrm{ws}}
\def\disc{\operatorname{disc}}

\def\hFal{h_{\mathrm{Fal}}}
\def\slope{\sphericalangle}

\newcommand{\mo}[1][k]{M^{\smash{(#1)}}}
\newcommand{\bmo}{M}

\begin{document}

\title{Point counting for foliations over number fields}

\author{Gal Binyamini}
\address{Weizmann Institute of Science, Rehovot, Israel}
\email{gal.binyamini@weizmann.ac.il}

\thanks{This research was supported by the ISRAEL SCIENCE FOUNDATION
  (grant No. 1167/17) and by funding received from the MINERVA
  Stiftung with the funds from the BMBF of the Federal Republic of
  Germany. This project has received funding from the European
  Research Council (ERC) under the European Union's Horizon 2020
  research and innovation programme (grant agreement No 802107)}

\subjclass[2010]{14G05,11G50,03C64,14G35}
\keywords{Pila-Wilkie theorem, Diophantine geometry, Foliations}

\begin{abstract}
  Let $\M$ be an affine variety equipped with a foliation, both
  defined over a number field $\K$. For an algebraic $V\subset\M$ over
  $\K$ write $\delta_V$ for the maximum of the degree and log-height
  of $V$. Write $\Sigma_V$ for the points where the leafs intersect
  $V$ improperly. Fix a compact subset $\cB$ of a leaf $\cL$. We prove
  effective bounds on the geometry of the intersection $\cB\cap V$. In
  particular when $\codim V=\dim\cL$ we prove that $\#(\cB\cap V)$ is
  bounded by a polynomial in $\delta_V$ and
  $\log\dist^{-1}(\cB,\Sigma_V)$. Using these bounds we prove a result
  on the interpolation of algebraic points in images of $\cB\cap V$ by
  an algebraic map $\Phi$. For instance under suitable conditions we
  show that $\Phi(\cB\cap V)$ contains at most $\poly(g,h)$ algebraic
  points of log-height $h$ and degree $g$.

  We deduce several results in Diophantine geometry. i) Following
  Masser-Zannier, we prove that given a pair of sections $P,Q$ of a
  non-isotrivial family of squares of elliptic curves that do not
  satisfy a constant relation, whenever $P,Q$ are simultaneously
  torsion their order of torsion is bounded effectively by a
  polynomial in $\delta_P,\delta_Q$. In particular the set of such
  simultaneous torsion points is effectively computable in polynomial
  time. ii) Following Pila, we prove that given $V\subset\C^n$ there
  is an (ineffective) upper bound, polynomial in $\delta_V$, for the
  degrees and discriminants of maximal special subvarieties. In
  particular it follows that Andr\'e-Oort for powers of the modular
  curve is decidable in polynomial time (by an algorithm depending on
  a universal, ineffective Siegel constant). iii) Following Schmidt,
  we show that our counting result implies a Galois-orbit lower bound
  for torsion points on elliptic curves of the type previously
  obtained using transcendence methods by David.
\end{abstract}
\maketitle
\date{\today}

\section{Introduction}
\label{sec:intro}

This paper is roughly divided into two parts. In~\secref{sec:intro} we
state our main technical results on point counting for
foliations. This includes upper bounds for the number of intersections
between a leaf of a foliation and an algebraic variety
(Theorem~\ref{thm:zero-count}), a corresponding bound for the covering
of such intersections by Weierstrass polydiscs
(Theorem~\ref{thm:wp-cover}), and consequently a counting result for
algebraic points in terms of height and degree
(Theorem~\ref{thm:alg-count}) in the spirit of the Pila-Wilkie theorem
and Wilkie's conjecture. The proofs of these result are given
in~\secref{sec:mult-ops}--\secref{sec:alg-count-proof}.

In the second part starting~\secref{sec:applications} we state three
applications of our point counting results in Diophantine
geometry. These include an effective form of Masser-Zannier bound for
simultaneous torsions points on squares of elliptic curves, and in
particular effective polynomial-time computability of this set; a
polynomial bound for Pila's proof of Andr\'e-Oort for $\C^n$, and in
particular the polynomial-time decidability (by an algorithm with an
ineffective constant); and a proof of Galois-orbit lower bounds for
torsion points in elliptic curves following an idea of Schmidt. We
also briefly discuss similar implications for Galois-orbit lower
bounds in Shimura varietis, to be presented in an upcoming paper with
Schmidt and Yafaev. The proofs of these results are given
in~\secref{sec:effective-mz-proof}--\secref{sec:degree-bounds-proof}.

Finally in Appendix~\ref{appendix:fuchs-growth} we prove some growth
estimates for solutions of inhomogeneous Fuchsian differential
equations over number fields. These are used in our treatment of the
Masser-Zannier result and would probably be similarly useful in many
of its generalizations.

\subsection{Setup}

In this section we introduce the main notations and terminology used
throughout the paper.

\subsubsection{The variety}

Let $\M\subset\A^N_\K$ be an irreducible affine variety defined over a
number field $\K$. We equip $\M$ with the standard Euclidean metric
from $\A^N$, denoted $\dist$, and denote by $\B_R\subset\M$ the
intersection of $\M$ with the ball of radius $R$ around the origin in
$\A^N$. Set $\B:=B_1$.

\subsubsection{The foliation}

Let $\vxi:=(\xi_1,\ldots,\xi_n)$ denote $n$ commuting, generically
linearly independent, rational vector fields on $\M$ defined over
$\K$.  We denote by $\cF$ the (singular) foliation of $\M$ generated
by $\vxi$ and by $\Sigma_\cF\subset\M$ the union of the polar loci of
$\xi_1,\ldots,\xi_n$ and the set of points where they are linearly
dependent.

For every $p\in\M\setminus\Sigma_\cF$ denote by $\cL_p$ the germ of
the leaf of $\cF$ through $p$. We have a germ of a holomorphic map
$\phi_p:(\C^n,0)\to\cL_p$ satisfying $\partial\phi_p/\partial x_i=\xi_i$ for
$i=1,\ldots,n$. We refer to this coordinate chart as the
$\vxi$-coordinates on $\cL_p$.

\subsubsection{Balls and polydiscs}

If $A\subset\C^n$ is a ball (resp. polydisc) and $\delta>0$, we denote
by $A^\delta$ the ball (resp. polydisc) with the same center, where
the radius $r$ (resp. each radii $r$) is replaced by $\delta^{-1}r$.
If $\phi_p$ continues holomorphically to a ball $B\subset\C^n$ around
the origin then we call $\cB:=\phi_p(B)$ a $\vxi$-ball. If $\phi_p$
extends to $B^\delta$ we denote $\cB^\delta:=\phi_p(B^\delta)$.

\subsubsection{Degrees and heights}

We denote by $h:\Q^\alg\to\R_{\ge0}$ the absolute logarithmic Weil
height. If $x\in\Q^\alg$ has minimal polynomial
$a_0\prod_{i=1}^d(x-x_i)$ over $\Z[x]$ then
\begin{equation}
  h(x) = \frac1d\left(\log |a_0| + \sum_{i=1}^d \log^+|x_i|\right), \qquad \log^+\alpha=\max\{\log\alpha,0\}.
\end{equation}
We also denote $H(x):=e^{h(x)}$. We define the height of a vector
$\vx\in(\Q^\alg)^n$ as the maximal height of the coordinates.

For a variety $V\subset\M$ we denote by $\deg V$ the degree with
respect to the standard projective embedding $\A^n\to\P^n$; we define
$h(V)$ as the height of the Chow coordinates of $V$ with respect to
this embedding. For a vector field $\vxi$ we define $\deg\vxi$
(resp. $h(\vxi)$) as the maximum degree (resp. logarithmic height) of
the polynomials $\vxi(\vx_i)$ where $\vx_i$ are the affine coordinates
on the ambient space. Finally we set
\begin{equation}
  \delta_\M:=\max([\K:\Q],\deg\M,h(\M))
\end{equation}
and
\begin{align}
  \delta(V) &:= \max(\delta_\M,\deg V,h(V)) & \delta(\vxi)&:=\max(\delta_\M,\deg\vxi,h(\vxi)).
\end{align} 
We sometimes write $\delta_V,\delta_\vxi$ for $\delta(V),\delta(\vxi)$
to avoid cluttering the notation.

\subsubsection{The unlikely intersection locus}

Let $V\subset\M$ be a pure-dimensional subvariety of codimension at
most $n$ defined over $\K$. We define the \emph{unlikely intersection
  locus} of $V$ and $\cF$ to be
\begin{equation}
  \Sigma_V := \Sigma_\cF\cup\{ p\in\M: \dim(V\cap\cL_p)>n-\codim V\},
\end{equation}
i.e. the set of points $p$ where $V$ intersects $\cL_p$ improperly.

\subsubsection{Weierstrass polydiscs}

Let $\cB$ be a $\vxi$-ball. We say that a coordinate system $\vx$ is a
\emph{unitary coordinate system} if it is obtained from the
$\vxi$-coordinates by a linear unitary transformation.

Let $X\subset\cB$ be an analytic subset of pure dimension $m$. We say
that a polydisc $\Delta:=\Delta_z\times\Delta_w$ in the unitary
$\vx=\vz\times\vw$ coordinates is a Weierstrass polydisc for $X$ if
$\bar\Delta\subset\cB$ and if $\dim\Delta_z=m$ and
$X\cap(\bar\Delta_z\times\partial\Delta_w)=\emptyset$. In this case
the projection $\Delta\cap X\to\Delta_z$ is a proper ramified covering
map, and we denote its (finite) degree by $e(\Delta,X)$ and call it
the degree of $X$ in $\Delta$.

\subsubsection{Asymptotic notation}

We use the asymptotic notation $Z=\poly_X(Y)$ to mean that $Z<P_X(Y)$
where $P_X$ is a polynomial depending on $X$. In this text the
coefficients of $P_X$ can always be explicitly computed from $X$
unless explicitly stated otherwise. We similarly write $Z=O_X(Y)$ for
$Z< C_X\cdot Y$ where $C_X\in\R_\ge0$ is a constant depending on $X$.

Throughout the paper the implicit constants in asymptotic notations
are assumed to depend on the ambient dimension of $\M$, which we omit
for brevity. All implicit constants are effective unless explicitly
stated otherwise (this occurs only in Theorem~\ref{thm:ao-decidable}
on Andr\'e-Oort for powers of the mdoular curve).

\subsection{Statement of the main results}
\label{sec:main-results}

Our first main theorem is the following bound for the number of
intersections between a $\vxi$-ball and an algebraic variety of
complementary dimension. Throughout this section we let $R$ denote a
positive real number.

\begin{Thm}\label{thm:zero-count}
  Suppose $\codim V=n$ and let $\cB\subset\B_R$ be a
  $\vxi$-ball of radius at most $R$. Then
  \begin{equation}
    \#(\cB^2\cap V) = \poly(\delta_\vxi,\delta_V,\log R,\log\dist^{-1}(\cB,\Sigma_V))
  \end{equation}
  where intersection points are counted with multiplicities.
\end{Thm}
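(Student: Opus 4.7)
The plan is to reduce the counting problem to zero-counting for a finite system of holomorphic functions on the ball $B\subset\C^n$ obtained by pulling back defining equations of $V$ via $\phi_p$, and then to control those zeros using growth estimates, a Jensen-type counting argument, and the multiplicity bounds provided by the multiplicity operator framework of Section~\ref{sec:mult-ops}.

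First I would reduce to a complete-intersection setup: since $\codim V=n$, one can find polynomials $P_1,\ldots,P_n$ of degree and log-height $\poly(\delta_V)$ whose common zero locus on $\M$ contains $V$ as an irreducible component, and use a Chow-form/residue argument to isolate the contribution of $V$ while preserving polynomial bounds. Setting $f_i:=P_i\circ\phi_p$ on $B^2$, the set $\cB^2\cap V$ is contained in the common zero set $Z$ of $f_1,\ldots,f_n$, so it suffices to bound $\#Z$ with multiplicities.

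The two quantitative inputs are growth and multiplicity. For growth, because $\phi_p$ is the flow of $\vxi$ starting from a point in $\B_R$, each $f_i$ satisfies an algebraic differential equation in the coordinates on $\C^n$ whose coefficients are polynomials on $\M$ of degree and height $\poly(\delta_\vxi,\delta_V)$, and the flow stays at distance $\gtrsim\dist(\cB,\Sigma_\cF)$ from the polar locus of $\vxi$. A Gronwall/majorant estimate then yields
\begin{equation}
  \log\sup_B|f_i|=\poly(\delta_\vxi,\delta_V,\log R,\log\dist^{-1}(\cB,\Sigma_V)).
\end{equation}
For multiplicities, at any $p\in Z$ with $p\notin\Sigma_V$ iterated Lie derivatives of the $P_i$ along $\xi_1,\ldots,\xi_n$ produce an algebraic witness of polynomial size whose nonvanishing at $p$ forces $\mult_p(f_1,\ldots,f_n)=\poly(\delta_\vxi,\delta_V)$; this is precisely the content of the multiplicity-operator machinery of Section~\ref{sec:mult-ops}.

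Finally I would combine a slicing/successive-resultant reduction with Jensen's formula to count $Z\cap B^2$: slicing by generic affine subspaces reduces to counting zeros of a single holomorphic function $g$ on a disc, whose maximum modulus on the outer disc is controlled by the growth estimate while some reference value $|g(q_0)|$ must be bounded below. The main obstacle is the required effective \Lojas.-type inequality: one must show that a suitable minimum modulus of $g$ degrades only logarithmically in $\dist(\cB,\Sigma_V)$, with \Lojas. exponents and constants polynomial in $\delta_V$ (this is precisely the algebraic content of the hypothesis that $\cB$ stays away from the unlikely-intersection locus where transversality fails). Combining the resulting lower bound with the growth estimate in Jensen's formula and multiplying by the multiplicity bound above then produces the polynomial bound for $\#(\cB^2\cap V)$ claimed in the theorem.
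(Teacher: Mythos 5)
Your overall toolkit (Jensen-type counting, multiplicity operators, a Diophantine \Lojas. inequality for the lower bound) is the right one, and it is essentially how the paper handles the one-dimensional case. But the proposal has a genuine gap exactly where the higher-dimensional difficulty lies: the reduction to a single holomorphic function of one variable. The common zeros of $f_1,\ldots,f_n$ form a zero-dimensional set, so ``slicing by generic affine subspaces'' does not apply to them, and a ``successive resultant'' is only defined once you know that the intermediate curve $\Gamma=\{f_1=\cdots=f_{n-1}=0\}$ projects properly, with finite and \emph{controlled} degree $\mu$, onto a disc $D_z$ --- i.e.\ once you have a Weierstrass polydisc $\Delta$ for $\Gamma$ with $e(\Gamma,\Delta)$ appropriately bounded, since $\mu$ enters the final count multiplicatively. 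Producing such polydiscs is Theorem~\ref{thm:wp-cover}, which the paper proves by a volume estimate (Crofton's formula) that in turn invokes Theorem~\ref{thm:zero-count} in dimension $n-1$; the two theorems are proved by \emph{simultaneous induction}. Your proposal contains no such inductive structure, and without it the key quantity $\mu$ is a priori unbounded.

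Second, you flag the lower bound on the minimum modulus of $g$ as ``the main obstacle'' but do not supply an argument for it. The paper's resolution is not a direct \Lojas. inequality for the transcendental function $g$: it shows that $\Sigma_V$ is set-theoretically cut out by the multiplicity operators $\mo(P)$, which are \emph{polynomials} on $\M$ of controlled degree and height (Proposition~\ref{prop:SigmaV-complexity}), applies Brownawell's inequality to those to produce a point where some $|\mo_p(P)|$ is bounded below by a power of $\dist(p,\Sigma_V)$ (Corollary~\ref{cor:mo-v-dist}), and then transfers this to a lower bound on the analytic resultant outside a small exceptional set (Lemmas~\ref{lem:mo-v-wp} and~\ref{lem:mo-v-resultant}); this last transfer is the technical heart of the proof and is absent from your sketch. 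Relatedly, restricting to linear slices of the leaf may create new unlikely intersections (e.g.\ $P_1$ vanishing identically on a $\vxi$-linear hyperplane of $\cL_p$), which destroys all control on $\log\dist^{-1}(\cdot,\Sigma)$ in the sliced problem; the paper spends Section~\ref{sec:general-position} reparametrizing $\vxi$ over $\Q$ with controlled height precisely to rule this out, and an appeal to ``generic'' subspaces does not address the need for an arithmetic, quantitatively controlled choice.
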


The reader may for simplicity consider the case $R=1$. The general
case reduces to this case immediately by rescaling the coordinates on
$\M$ and the vector fields $\xi$ by a factor of $R$. This rescaling
factor enters logarithmically into $\delta_V$ and $\delta_\vxi$, hence
the dependence on $\log R$ in the general case. To simplify our
presentation we will therefore consider only the case $R=1$ in the
proof of Theorem~\ref{thm:zero-count}.

\begin{Rem}\label{rem:use-polydiscs}
  Similarly to the comment above, by rescaling each coordinate
  separately we may also work with arbitrary polydiscs instead of
  arbitrary balls.
\end{Rem}

We also record a corollary which is sometimes useful in the case of
higher codimensions.

\begin{Cor}\label{cor:zero-count-any-codim}
  Let $V\subset\M$ have arbitrary codimension and let
  \begin{equation}
    \Sigma := \Sigma_\cF \cup\{ p\in\M: \dim(V\cap\cL_p)>0\}.
  \end{equation}
  Let $\cB\subset\B_R$ be a $\vxi$-ball of radius at most $R$. Then
  \begin{equation}
    \#(\cB^2\cap V) = \poly(\delta_\vxi,\delta_V,\log R,\log\dist^{-1}(\cB,\Sigma))
  \end{equation}
  where intersection points are counted with multiplicities.
\end{Cor}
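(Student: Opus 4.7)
The plan is to reduce the corollary to Theorem~\ref{thm:zero-count} by enlarging $V$ to a variety of codimension exactly $n$. If $\codim V = n$, there is nothing to do. If $\codim V < n$, the claim is vacuous: at any $p \in V \cap \cL$ the standard dimension inequality gives $\dim(V \cap \cL_p) \geq n - \codim V > 0$ at $p$, so $p \in \Sigma$; hence $\cB^2 \cap V \neq \emptyset$ forces $\dist(\cB, \Sigma) = 0$ and the bound holds trivially. The essential case is thus $k := \codim V > n$.

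In this case I would fix $D = \poly(\delta_V)$ large enough that the degree-$\leq D$ part of the ideal of $V$ cuts $V$ out set-theoretically, and form $\tilde V := \{g_1 = \cdots = g_n = 0\} \cap \M$ where $g_1, \ldots, g_n$ are integer-coefficient linear combinations of degree-$\leq D$ elements of the ideal, of height polynomial in $\delta_V$. For a suitable choice of the $g_i$, $\tilde V$ has codimension exactly $n$, contains $V$, and satisfies $\delta(\tilde V) = \poly(\delta_V)$. Since the ideal of $\tilde V$ is contained in the ideal of $V$, multiplicities are preserved in the sense $\mult_p(\tilde V \cap \cL) \geq \mult_p(V \cap \cL)$ at every isolated $p$, so $\#(\cB^2 \cap V) \leq \#(\cB^2 \cap \tilde V)$ with multiplicities. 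Theorem~\ref{thm:zero-count} applied to $\tilde V$ then yields a polynomial bound in terms of $\delta_\vxi$, $\delta_{\tilde V}$, $\log R$, and $\log\dist^{-1}(\cB, \Sigma_{\tilde V})$.

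The main obstacle is to control $\log\dist^{-1}(\cB, \Sigma_{\tilde V})$ by $\log\dist^{-1}(\cB, \Sigma)$. Since $V \subseteq \tilde V$ automatically $\Sigma \subseteq \Sigma_{\tilde V}$; the extra locus $\Sigma_{\tilde V} \setminus \Sigma$ arises from spurious extra components of $\tilde V$ meeting $\cL$ positively, and for a generic choice of the $g_i$ it is contained in a proper subvariety $Z$ of polynomially bounded degree and height. The delicate step is producing such a choice effectively: one needs a Bertini- or \Lojas.-style argument exhibiting small-height integer coefficients for which $\log\dist^{-1}(\cB, Z) = \poly(\delta_\vxi, \delta_V, \log R, \log\dist^{-1}(\cB, \Sigma))$. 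Once this quantitative transversality is in place the corollary follows directly from Theorem~\ref{thm:zero-count}.
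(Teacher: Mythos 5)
Your overall reduction --- enlarge $V$ to a complete intersection $\tilde V\supset V$ of codimension exactly $n$, cut out by $n$ bounded-height combinations of bounded-degree generators of the ideal, note that multiplicities of isolated intersections can only increase, and apply Theorem~\ref{thm:zero-count} to $\tilde V$ --- is exactly the route the paper takes: this is Proposition~\ref{prop:CI-choose} applied with $m=n$, and the paper's proof of the corollary is literally that single application. Your preliminary observations (the case $\codim V=n$ is the theorem itself; the case $\codim V<n$ is vacuous because every point of $V\cap\cL_p$ then lies in $\Sigma$) are also fine.

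The gap is in the one step that carries all the content, which you defer as ``the delicate step,'' and moreover you aim it at the wrong target. You propose to tolerate a spurious locus $Z=\Sigma_{\tilde V}\setminus\Sigma$ of bounded degree and height and then to choose coefficients for which $\log\dist^{-1}(\cB,Z)$ is polynomially controlled. Bounded degree and height of $Z$ give no lower bound whatsoever on $\dist(\cB,Z)$, and any quantitative avoidance of $\cB$ would make the choice of $\tilde V$ depend on $\cB$, which is both awkward and unnecessary. The statement one should prove --- and the one Proposition~\ref{prop:CI-choose} does prove --- is that a suitable bounded-height choice yields $\Sigma_{\tilde V}$ \emph{equal} to $\{p:\dim(V\cap\cL_p)>0\}$, i.e.\ no spurious locus at all, so that $\dist(\cB,\Sigma_{\tilde V})=\dist(\cB,\Sigma)$ and no comparison is needed. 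The mechanism is a codimension count in the space of coefficient tuples: writing the new equation as $\tilde P=Q_1P_1+\cdots+Q_SP_S$ with the $Q_j$ of degree at most $D:=\dim\M+1$, for each fixed $p\notin\Sigma$ the set of tuples for which $\tilde P$ vanishes identically on some component of the previous intersection has codimension at least $D>\dim\M$ (the linear-independence argument of Lemma~\ref{lem:unlikely-codim}), so the union over all $p$ projects to a proper subvariety of the coefficient space, of degree bounded via Proposition~\ref{prop:SigmaV-complexity}; Lemma~\ref{lem:Q-pt-select} then supplies a rational tuple of polynomially bounded height outside it. Once you replace your distance-to-$Z$ step by this exact-equality statement, your argument closes.
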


Our second main theorem states that the intersection between a
$\vxi$-ball and a subvariety admits a covering by Weierstrass
polydiscs of effectively bounded size.

\begin{Thm}\label{thm:wp-cover}
  Suppose $\codim V\le n$ and let $\cB\subset\B_R$ be a $\vxi$-ball of
  radius at most $R$. Then there exists a collection of Weierstrass
  polydiscs $\{\Delta_\alpha\subset\cB\}$ for $\cB\cap V$ such that
  the union of $\Delta^2_\alpha$ covers $\cB^2$ and
  \begin{equation}
    \#\{\Delta_\alpha\},\max_\alpha e(\cB\cap V,\Delta_\alpha) = \poly(\delta_\vxi,\delta_V,\log R,\log\dist^{-1}(\cB,\Sigma_V)).
  \end{equation}
\end{Thm}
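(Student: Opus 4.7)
The plan is to prove Theorem~\ref{thm:wp-cover} by constructing, around every point $q\in\cB^2$, a single Weierstrass polydisc $\Delta_q\subset\cB$ whose radii are bounded below by $1/P$, where
\[
P=\poly(\delta_\vxi,\delta_V,\log R,\log\dist^{-1}(\cB,\Sigma_V)),
\]
and then forming a covering by the half-scaled polydiscs $\Delta_q^2$. The degree $e(\Delta_q,\cB\cap V)$ will be bounded by applying Theorem~\ref{thm:zero-count} to a suitably chosen sub-foliation of $\cF$ whose leaves are the fibers of the projection to $\Delta_z$. Set $m=n-\codim V$ throughout.

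\emph{Step 1 (local construction).} Fix $q\in\cB^2$. We seek a unitary coordinate change $\vx=\vz\times\vw$ centered at $q$ with $\dim\vz=m$, together with radii $r_z,r_w\in[1/P,\,2/P]$, such that the polydisc $\Delta=\Delta_z\times\Delta_w$ satisfies $\bar\Delta\subset\cB$ and $(\cB\cap V)\cap(\bar\Delta_z\times\partial\Delta_w)=\emptyset$. Containment $\bar\Delta\subset\cB$ is automatic from $q\in\cB^2$ as soon as $r_z,r_w\le1/2$. For the boundary-avoidance condition, I would fix an algebraic-unitary transformation $U$ of polynomial height (a generic such $U$ makes the projection of $\cB\cap V$ to the $\vz$-coordinates generically finite), then choose the radius $r_w$ by pigeonhole among $\poly(P)$ candidate values: for each candidate $r_w$, the intersection of $\cB\cap V$ with the real boundary hypersurface $\bar\Delta_z\times\partial\Delta_w$ lives in a one-real-parameter family of complex-analytic slices, and applying Theorem~\ref{thm:zero-count} to the corresponding one-dimensional sub-families bounds the number of bad radii polynomially. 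A suitable Chebyshev-type search therefore locates a good radius.

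\emph{Step 2 (degree bound).} The degree $e(\Delta_q,\cB\cap V)$ equals $\#\bigl(V\cap(\{z_0\}\times\Delta_w)\bigr)$ for a generic $z_0\in\Delta_z$. The fiber $\{z_0\}\times\Delta_w$ is an open subset of a leaf of the sub-foliation $\cF'$ generated by the $\vw$-components of the unitary-transformed vector fields $U\cdot\vxi$. Since $\cF'$ has dimension $\codim V$, the intersection with $V$ is of complementary dimension, and Theorem~\ref{thm:zero-count} gives
\[
\#\bigl(V\cap(\{z_0\}\times\Delta_w)\bigr)=\poly(\delta_{\cF'},\delta_V,\log R,\log\dist^{-1}(\cB,\Sigma_{V,\cF'})).
\]
One checks that $\delta_{\cF'}=\poly(\delta_\vxi,h(U))=\poly(\delta_\vxi,P)$ and that the unlikely intersection locus of $\cF'$ with $V$ is contained in $\Sigma_V$, since any improper intersection of $V$ with an $\cF'$-leaf automatically forces an improper intersection with the ambient $\cF$-leaf.

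\emph{Step 3 (covering).} Place a lattice of spacing $1/(10P)$ on $\cB^2$; the number of lattice points is $O(P^{2n})$. For each lattice point $q$ use the polydisc $\Delta_q$ from Step~1; the half-scaled $\Delta_q^2$ has radii at least $1/(2P)$ and hence covers a neighborhood of $q$ larger than the lattice spacing, so the union of the $\Delta_q^2$ contains $\cB^2$.

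\emph{Main obstacle.} The decisive step is Step~1: producing, uniformly in $q\in\cB^2$, a Weierstrass polydisc of polynomial size. The non-trivial content is the quantitative boundary-avoidance, for which I plan to combine a generic choice of algebraic-unitary direction (of bounded height) with the pigeonhole described above, fueled by Theorem~\ref{thm:zero-count} applied to one-real-parameter slices of the candidate boundary. The height bookkeeping required to keep $\delta_{\cF'}$ polynomial in the original parameters — and to ensure the unlikely-intersection locus of $\cF'$ stays controlled by $\Sigma_V$ — is the source of most of the technical work.
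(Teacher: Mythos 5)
Your overall architecture (one polydisc of polynomial size around each point of a $1/P$-lattice, degree bounded by counting on fibers realized as leaves of a sub-foliation) matches the paper's in outline, but two of your key steps have genuine gaps. The decisive one is Step~1. The set of ``bad'' radii $r_w$ for a fixed center and direction is $\{\,\norm{w(x)} : x\in X\cap(\bar\Delta_z\times\C^{n-m})\,\}$, which is a finite union of \emph{intervals}, not a finite set of points, so there is nothing to pigeonhole unless you first bound its total measure; and the slices $\bar\Delta_z\times\partial\Delta_w(r_w)$ are real hypersurfaces meeting $X$ in positive-dimensional real sets, so Theorem~\ref{thm:zero-count} (which counts isolated intersections of a leaf with a variety of complementary codimension) does not apply to them. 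The quantitative engine you are missing is a bound on $\vol(\cB\cap V)$. The paper obtains the polydisc from Proposition~\ref{prop:empty-ball}: the Lelong-type lower bound $\vol(X\cap B(p,\e))\ge c\,\e^{2\dim X}$ forces a ball of radius $\poly(\vol(X))^{-1}$ disjoint from $S^1\cdot X$, which yields the Weierstrass polydisc (Corollary~\ref{cor:wp-v-vol}); the degree bound then comes for free from $\vol(X\cap\Delta)\ge e(X,\Delta)\vol(\Delta_z)$. The volume itself is estimated by the complex Crofton formula, reducing to $\#(V\cap\cB\cap L)$ over linear slices $L$, which is where Theorem~\ref{thm:zero-count} (in lower dimension, on the foliated space $L_k\M$) actually enters.

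The second gap is your assertion in Step~2 that an improper intersection of $V$ with an $\cF'$-leaf ``automatically forces'' an improper intersection with the ambient $\cF$-leaf, hence $\Sigma_{V,\cF'}\subset\Sigma_V$. This is false: with $n=2$ and $\codim V=1$, the curve $V\cap\cL_p$ is a proper intersection, yet it may coincide with a single fiber $\{z_0\}\times\C_w$, making the intersection with that $\cF'$-leaf improper while $\Sigma_V$ is not approached at all. This is exactly the degeneration the paper's general-position section (\secref{sec:general-position}) is built to rule out: one must reparametrize the leaves by a generic polynomial map $\Phi_0$ of controlled height (Lemma~\ref{lem:unlikely-codim}, Lemma~\ref{lem:Q-pt-select}) so that no new unlikely intersections arise in any linear sub-foliation. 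Without this, the term $\log\dist^{-1}(\cB,\Sigma_{V,\cF'})$ in your application of Theorem~\ref{thm:zero-count} is uncontrolled (possibly infinite), and the degree bound collapses. You flag the bookkeeping as an obstacle, but the containment you assert is not a bookkeeping issue; it is simply untrue without the perturbation.
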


The same comment on rescaling to the case $R=1$ applies to
Theorem~\ref{thm:wp-cover} as well.

\begin{Rem}
  It would also have been possible to state our results in invariant
  language for a general algebraic variety and its foliation without
  fixing an affine chart and a basis of commuting vector fields. We
  opted for the less invariant language in order to give an explicit
  description of the dependence of our constants on the foliation
  $\cF$ and the relatively compact domain $\cB\subset\cF$ being
  considered.
\end{Rem}

\subsection{Counting algebraic points}

For this section we fix: $\ell\in\N$; a map $\Phi\in\cO(\M)^\ell$
defined over $\K$; an algebraic $\K$-variety $V\subset\M$; and a
$\vxi$-ball $\cB\subset\B_R$ of radius at most $R$. Set
\begin{equation}
  A = A_{V,\Phi,\cB} := \Phi(\cB^2\cap V) \subset \C^\ell.
\end{equation}
Denote
\begin{equation}
  A(g,h) := \{ p\in A : [\Q(p):\Q] \le g \text{ and } h(p)\le h\}.
\end{equation}
Our goal will be to study the sets $A(g,h)$ in the spirit of the
Pila-Wilkie counting theorem \cite{pila-wilkie}. Toward this end we
introduce the following notation.

\begin{Def}\label{def:blocks}
  Let $\cW\subset\C^\ell$ be an irreducible algebraic variety.  We
  denote by $\Sigma(V,\cW;\Phi)$ the union of (i) the points $p$ where
  the germ $\Phi\rest{\cL_p\cap V}$ is not a finite map; (ii) the
  points $p$ where $\Phi(\cL_p\cap V)$ contains one of the analytic
  components of the germ $\cW_{\Phi(p)}$. We omit $\Phi$ from the
  notation if it is clear from the context.
\end{Def}

In most applications $\Phi$ will be a set of coordinates on the leafs
of our foliation and condition (i) will be empty. Condition (ii) then
states that $\Phi(\cL_p\cap V)$ contains a connected semialgebraic set
of positive dimension (namely a component of $\cW$). Our main result
is the following.

\begin{Thm}\label{thm:alg-count}
  Let $\e>0$. There exists a collection of irreducible
  $\Q$-subvarieties $\{\cW_\alpha\subset\C^\ell\}$ such
  that $\dist(\cB,\Sigma(V,\cW_\alpha))<\e$,
  \begin{equation}
    A(g,h) \subset \cup_\alpha \cW_\alpha,
  \end{equation}
  and
  \begin{equation}\label{eq:alg-count-deg}
    \#\{\cW_\alpha\}, \max_\alpha \delta_{\cW_\alpha} =
    \poly(\delta_\vxi,\delta_V,\delta_\Phi,g,h,\log R,\log \e^{-1}).
  \end{equation}
\end{Thm}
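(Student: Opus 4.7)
My plan is to combine the Weierstrass polydisc covering of Theorem~\ref{thm:wp-cover} with the interpolation-determinant method of Bombieri--Pila, iterated on dimension in the style of Pila--Wilkie.

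The first step is to apply Theorem~\ref{thm:wp-cover} to $\cB\cap V$, obtaining a polynomially-bounded collection of Weierstrass polydiscs $\Delta_\beta=\Delta_{z,\beta}\times\Delta_{w,\beta}$ covering $\cB^2\cap V$, each of degree $e_\beta$ polynomial in $\delta_\vxi,\delta_V,\log R,\log\dist^{-1}(\cB,\Sigma_V)$. On each $\Delta_\beta$ the intersection $\cB\cap V\cap\Delta_\beta$ is a ramified cover of $\Delta_{z,\beta}$ of degree $e_\beta$, so composing with $\Phi$ yields $e_\beta$ holomorphic branches $\Delta_{z,\beta}\to\C^\ell$. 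Cauchy estimates on $\Delta_{z,\beta}$ versus its shrunk counterpart $\Delta^2_{z,\beta}$ give derivative bounds on these branches that are polynomial in all parameters.

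Next, for each $\Delta_\beta$ and each branch, I would run the interpolation-determinant method. Fixing a degree $d$ to be calibrated, choose a basis $m_1,\dots,m_M$ of monomials in the $\ell$ coordinates of total degree $\le d$. Given $M$ distinct parameter points $z_1,\dots,z_M\in\Delta^2_{z,\beta}$ whose lifts $q_j$ on a fixed branch satisfy $\Phi(q_j)\in A(g,h)$, form the determinant $D:=\det(m_i(\Phi(q_j)))$. A Taylor-expansion argument using the Cauchy derivative bounds yields an analytic upper bound on $|D|$; on the other hand $D$ is algebraic with height and degree polynomial in $M,d,g,h$, so if $D\neq 0$ a Liouville lower bound applies. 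Balancing with $d$ and $M$ polynomial in the parameters, the analytic bound wins and $D=0$ for every choice of $M$-tuple of points. Varying the tuple forces the algebraic points to lie on an algebraic hypersurface $\cW$ whose degree and height are polynomial in the data.

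Finally, for each candidate $\cW$ I dichotomize. Either $\Phi(\cL_p\cap V)$ contains a component of the germ $\cW_{\Phi(p)}$ for some $p$ within $\e$ of $\cB$---in which case $\dist(\cB,\Sigma(V,\cW))<\e$ and $\cW$ joins the output collection---or the intersection $V':=V\cap\Phi^{-1}(\cW)$ has strictly greater leaf-codimension along $\cB$ while retaining polynomial complexity by B\'ezout. I recurse with $V'$ in place of $V$; since the leaf-codimension strictly increases, the recursion terminates after at most $n=\dim\cL$ steps, bottoming out in the zero-dimensional case already handled by Theorem~\ref{thm:zero-count}. \emph{The main obstacle} is the delicate calibration of $d$ and $M$ in the determinant balance so that one obtains the polynomial (Wilkie-conjecture-type) dependence on $g,h$ claimed in~\eqref{eq:alg-count-deg}, rather than the $H^\e$ dependence of the naive Pila--Wilkie argument; this is exactly where the explicit polynomial control on $e_\beta$ provided by Theorem~\ref{thm:wp-cover} is decisive, since it is what keeps the derivative bounds on the branches of $\Phi$ polynomial. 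A secondary concern is ensuring that parameter complexity increments compose polynomially across the dimensional descent, which follows from B\'ezout together with the strict codimension increase at each stage.
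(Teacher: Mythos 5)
Your overall architecture --- a Weierstrass polydisc covering via Theorem~\ref{thm:wp-cover}, interpolation of the algebraic points on each polydisc by a hypersurface of controlled complexity, and a dimensional descent governed by the dichotomy ``either $\dist(\cB,\Sigma(V,\cW))<\e$ or intersect and recurse'' --- matches the paper's proof (Proposition~\ref{prop:alg-count-step}). But the interpolation step as you propose it has a genuine gap, and it sits exactly at the point you flag as ``the main obstacle'': the Bombieri--Pila interpolation determinant cannot deliver polynomial dependence on $g$. The determinant $D=\det(m_i(\Phi(q_j)))$ is an algebraic number lying in the compositum of the fields $\Q(\Phi(q_1)),\ldots,\Q(\Phi(q_M))$; when the points are merely of degree at most $g$ over $\Q$ but lie in \emph{varying} number fields, this compositum has degree up to $g^{M}$, so your claim that $D$ has degree polynomial in $M,d,g,h$ is false, and the Liouville lower bound for $D\neq0$ carries an exponent of order $g^{M}$ rather than $g$. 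No calibration of $d$ and $M$ can beat this against the analytic upper bound (whose gain is only a power of $M$ in the exponent), and the sharp control on $e_\beta$ from Theorem~\ref{thm:wp-cover} does not help here --- it only improves the analytic side. The determinant method does work when all points lie in one fixed number field (then $D$ lives in that field and the Liouville exponent is bounded), which is the setting of \cite{me:rest-wilkie}; the paper states explicitly that this method ``does not produce good bounds'' for $A(g,h)$ with varying fields.

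The paper's replacement is Wilkie's Thue--Siegel approach (Lemma~\ref{lem:thue-siegel}, Lemma~\ref{lem:interpolation-upper-bd}, Proposition~\ref{prop:interpolation}): instead of a determinant built from the points, one constructs a single nonzero polynomial $P\in\Z[y_1,\ldots,y_{m+1}]$ of controlled degree and coefficient size which is uniformly small on \emph{all} of $X\cap\Delta^2$, by applying the Siegel lemma to the coefficients of the Weierstrass expansions of the monomials $\Phi^\alpha$ (Proposition~\ref{prop:wp-expansion}). Because $P$ has integer coefficients and is built independently of the algebraic points, the arithmetic lower bound at each individual point of degree $g$ and height $h$ (Lemma~\ref{lem:algebraic-lower-bd}) has exponent exactly $-g$, and the balance then yields $\deg P$ and $h(P)$ polynomial in $g,h$ as required. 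You would need to replace your determinant step with this construction. A secondary omission: when you recurse with $V'=V\cap\Phi^{-1}(\cW)$ you must still control the term $\log\dist^{-1}(\cB,\Sigma_{V'})$ appearing in Theorem~\ref{thm:wp-cover}; the paper does this by showing $\{p:\dim(V'\cap\cL_p)\ge\dim\cW\}\subset\Sigma(V,\cW;\Phi)$ and passing to a complete intersection $W\supset V'$ with $\Sigma_W\subset\Sigma(V,\cW;\Phi)$ via Proposition~\ref{prop:CI-choose}, which is precisely where the hypothesis $\dist(\cB,\Sigma(V,\cW))\ge\e$ and the $\log\e^{-1}$ in the final bound enter.
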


As with Theorem~\ref{thm:zero-count}, one can always reduce to the
case $R=1$ in Theorem~\ref{thm:alg-count} by rescaling, and we will
consider only the case $R=1$ in the proof.

\begin{Rem}[Blocks from nearby leafs]
  Theorem~\ref{thm:alg-count} can be viewed as an analog of the
  Pila-Wilkie theorem in its blocks formulation
  \cite{pila:blocks}. Suppose for simplicity that $\Phi$ is such that
  condition $(i)$ in Definition~\ref{def:blocks} is automatically
  satisfied for all leafs. The $\{\cW_\alpha\}$ are similar to blocks
  in the sense that they are algebraic varieties containing all of
  $A(g,h)$. The difference is that in the Pila-Wilkie theorem, these
  blocks are all subsets of $A^\alg$. In Theorem~\ref{thm:alg-count}
  one should think of the set $A$ as belonging to a family $A_\cL$,
  parametrized by varying the leaf $\cL$ while keeping $V,\Phi$
  fixed. The blocks $\cW_\alpha$ correspond to some algebraic part,
  but possibly of an $A_\cL$ for a nearby leaf $\cL$ (at distance $\e$
  from the original leaf). We therefore refer to $\{W_\alpha\}$ as
  blocks coming from nearby leafs.
\end{Rem}

Ideally one would hope to obtain a result
with~\eqref{eq:alg-count-deg} independent of $\e$, which would
eliminate the need to consider blocks from \emph{nearby} leafs and
give a result roughly analogous to a block-counting version of the
Wilkie conjecture. Unfortunately, due to the dependence in our main
theorems on $\log\dist^{-1}(\cB,\Sigma_V)$, one cannot expect to
derive such a result.

On the other hand, in practical applications of the counting theorem
one usually has good control over the possible blocks, not only on
$\cB$ but on all nearby leafs. This occurs because the foliations
normally used in Diophantine applications are highly symmetric,
usually arising as flat structures associated to a principal
$G$-bundle for some algebraic group $G$. This implies that the blocks
from nearby leafs are obtained as symmetric images (by a symmetry
$\e$-close to the identity) of the blocks from leaf $\cL$ itself. In such
cases Theorem~\ref{thm:alg-count} gives an effective polylogarithmic
version of the Pila-Wilkie counting theorem, which usually leads to
refined information for the Diophantine application. We give several
examples of this in~\secref{sec:applications}.

As a simple example of this type we have the following consequence
of Theorem~\ref{thm:alg-count}, in the case the no blocks appear on
any of the leafs.

\begin{Cor}\label{cor:wilkie}
  Suppose that for every $p\in\M$ the germ $\Phi\rest{\cL_p\cap V}$ is
  a finite map, and $\Phi(\cL_p\cap V)$ contains no germs of algebraic
  curves. Then
  \begin{equation}
    \#A(g,h) = \poly_\ell(\delta_\vxi,\delta_V,\delta_\Phi,\log R,g,h).
  \end{equation}
\end{Cor}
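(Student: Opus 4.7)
The plan is to invoke Theorem~\ref{thm:alg-count} with $\e$ chosen as any fixed positive constant (say $\e=1$), and to argue that under the hypotheses of the corollary the resulting cover $\{\cW_\alpha\}$ consists entirely of zero-dimensional subvarieties; the bound will then follow by summing their degrees.

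To justify the first step, I would show that $\Sigma(V,\cW;\Phi)=\emptyset$ for every irreducible $\Q$-subvariety $\cW\subset\C^\ell$ of positive dimension. Given a point $q\in\cW$ and an analytic branch $B$ of the germ $\cW_q$, one can pass to the normalization $\tilde\cW\to\cW$ and take a sufficiently generic algebraic curve $\tilde C\subset\tilde\cW$ through the preimage of $q$ corresponding to $B$; its image $C\subset\cW$ is an algebraic curve whose germ at $q$ is contained in $B$. Consequently, if some $p\in\M\setminus\Sigma_\cF$ satisfied condition (ii) of Definition~\ref{def:blocks} for $\cW$, then $\Phi(\cL_p\cap V)$ would contain a component $B$ of $\cW_{\Phi(p)}$, and hence the germ at $\Phi(p)$ of the algebraic curve $C$, contradicting the assumption that $\Phi(\cL_p\cap V)$ contains no germs of algebraic curves. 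Condition (i) is excluded by the finiteness hypothesis. Thus $\Sigma(V,\cW;\Phi)=\emptyset$, and since $\dist(\cB,\emptyset)=\infty$ the distance constraint supplied by Theorem~\ref{thm:alg-count} forbids any positive-dimensional $\cW_\alpha$.

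For the counting step, I observe that each remaining $\cW_\alpha$ is a zero-dimensional irreducible $\Q$-subvariety, i.e.\ a Galois orbit whose cardinality equals $\deg\cW_\alpha\le\delta_{\cW_\alpha}$. Fixing $\e=1$ so that $\log\e^{-1}=0$, the estimates~\eqref{eq:alg-count-deg} and a union bound yield
\[
\#A(g,h)\le\sum_\alpha\#\cW_\alpha(\C)\le\#\{\cW_\alpha\}\cdot\max_\alpha\delta_{\cW_\alpha}=\poly(\delta_\vxi,\delta_V,\delta_\Phi,\log R,g,h),
\]
as required. The only delicate ingredient is the geometric claim that a positive-dimensional algebraic variety always admits an algebraic curve through any prescribed point and branch; this is immediate at smooth points by slicing with a generic affine subspace and is reduced to the smooth case via normalization at singular ones. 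Beyond this, the proof is a direct specialization of Theorem~\ref{thm:alg-count} to a situation where no blocks from nearby leafs can appear.
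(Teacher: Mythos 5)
Your argument is correct and is exactly the reasoning the paper intends (the paper leaves the proof implicit, describing the corollary as the case where ``no blocks appear on any of the leafs''): the hypotheses force $\Sigma(V,\cW;\Phi)=\emptyset$ for every positive-dimensional $\cW$, so the constraint $\dist(\cB,\Sigma(V,\cW_\alpha))<\e$ in Theorem~\ref{thm:alg-count} leaves only zero-dimensional $\cW_\alpha$, and the count follows from~\eqref{eq:alg-count-deg}. Your normalization argument supplying a germ of an algebraic curve inside any analytic branch of a positive-dimensional variety is a correct and adequate justification of the key geometric step.
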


\subsection{A result for restricted elementary functions}

Recall the structure of \emph{restricted elementary functions} is
defined by
\begin{equation}
  \R^\RE = (\R,<,+,\cdot,\exp\rest{[0,1]},\sin\rest{[0,\pi]}).
\end{equation}

For a set $A\subset\R^m$ we define the \emph{algebraic part} $A^\alg$
of $A$ to be the union of all connected semialgebraic subsets of $A$
of positive dimension. We define the \emph{transcendental part}
$A^\trans$ of $A$ to be $A\setminus A^\alg$.

In \cite{me:rest-wilkie} together with Novikov we established the
\emph{Wilkie conjecture} for $\R^\RE$-definable sets. Namely,
according to \cite[Theorem~2]{me:rest-wilkie} if $A\subset\R^m$ is
$\R^\RE$-definable then $\#A^\trans(g,h)=\poly_{A,g}(h)$. Replacing
the application of \cite[Proposition~12]{me:rest-wilkie} by the
stronger Proposition~\ref{prop:interpolation} established in the
present paper yields sharp dependence on $g$.

\begin{Thm}\label{thm:RE-wilkie}
  Let $A\subset\R^m$ be $\R^\RE$-definable. Then 
  \begin{equation}
    \#A^\trans(g,h) = \poly_A(g,h).
  \end{equation}
\end{Thm}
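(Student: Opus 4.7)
The plan is to follow the proof of \cite[Theorem~2]{me:rest-wilkie} essentially verbatim, substituting the sharper interpolation bound of the present paper at one specific step. Concretely, the result of \cite{me:rest-wilkie} gave $\#A^\trans(g,h)=\poly_{A,g}(h)$, and the text preceding our theorem already tells us that the only missing ingredient was a version of the interpolation lemma with joint polynomial dependence in $g$ and $h$; this is exactly what Proposition~\ref{prop:interpolation} now supplies.

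First I would recall the reduction from \cite{me:rest-wilkie}: every $\R^\RE$-definable set can, after a cell decomposition adapted to the restricted exponential and sine, be presented as a finite union of images $\Phi(\cB\cap V)$, where $\cB$ is a $\vxi$-ball on a leaf of an algebraic foliation $\cF$ defined over $\Q$, $V$ is an algebraic subvariety, and $\Phi$ is an algebraic map. The crucial point is that $\exp\rest{[0,1]}$ and $\sin\rest{[0,\pi]}$ satisfy polynomial ODEs over $\Q$ with bounded initial data, so the data $(\M,\vxi,V,\Phi,\cB)$ fall into the setup of~\secref{sec:main-results} with $\delta_\vxi,\delta_V,\delta_\Phi,R$ all bounded by constants depending only on $A$. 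Thus $A(g,h)$ is identified with a subset of $A_{V,\Phi,\cB}(g,h)$, and the transcendental part $A^\trans$ corresponds to those algebraic points that are \emph{not} captured by blocks of positive-dimensional algebraic families.

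Second, I would replay the inductive ``removal of blocks'' scheme of \cite{me:rest-wilkie}: one covers $A(g,h)$ by a family of auxiliary varieties using an interpolation-determinant argument, splits off those that produce genuine blocks (which contribute only to $A^\alg$, not to $A^\trans$), and recurses on the residual transcendental points. At each step of this recursion the key quantitative input is a bound on the degree and number of the interpolating varieties in terms of $g$ and $h$. In \cite{me:rest-wilkie} this input is \cite[Proposition~12]{me:rest-wilkie}, whose dependence on $g$ is not polynomial, forcing the final bound to be $\poly_{A,g}(h)$. Replacing that input by Proposition~\ref{prop:interpolation} of the present paper — whose estimates, in the spirit of Theorem~\ref{thm:alg-count}, are jointly polynomial in $g$ and $h$ — yields polynomial dependence on $g$ at every stage of the induction. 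Since the combinatorial structure of the induction is unchanged, the final count aggregates to $\poly_A(g,h)$.

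The main obstacle, which is already absorbed into Proposition~\ref{prop:interpolation}, is precisely the interpolation step: one must produce interpolating hypersurfaces of degree polynomial rather than exponential in $g$. Once that is granted, the argument of \cite{me:rest-wilkie} is essentially a bookkeeping exercise, and the only verification left is that at each stage of their induction Proposition~12 is invoked with inputs (a leaf, an algebraic variety, a parameter $g$) that lie in the hypotheses of Proposition~\ref{prop:interpolation}; this is immediate from the construction above and from the fact that both propositions have the same input/output shape, differing only in the quality of the $g$-dependence.
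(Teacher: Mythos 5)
Your proposal is correct and is essentially the paper's own argument: the paper proves Theorem~\ref{thm:RE-wilkie} precisely by rerunning the induction of \cite[Theorem~2]{me:rest-wilkie} with \cite[Proposition~12]{me:rest-wilkie} replaced by Proposition~\ref{prop:interpolation}, whose bounds are jointly polynomial in $g$ and $h$. One small correction to your framing: the reduction in \cite{me:rest-wilkie} runs through Pfaffian functions and the Weierstrass-polydisc covers established there, not through the foliation setup of \secref{sec:main-results} --- which is why the paper stresses that this proof is independent of the main technical machinery here; the substitution still goes through because Proposition~\ref{prop:interpolation} is stated for general analytic sets and Weierstrass polydiscs, matching the input shape of \cite[Proposition~12]{me:rest-wilkie}.
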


We remark that the proof of Proposition~\ref{prop:interpolation} and
consequently Theorem~\ref{thm:RE-wilkie} is self-contained and
independent of the main technical material developed in the present
paper. Still, we thought Theorem~\ref{thm:RE-wilkie} is worth stating
explicitly for its own sake, and for putting
Theorem~\ref{thm:alg-count} into proper context.

\subsection{Comparison with other effective counting results}

For restricted elementary functions, the approach developed in
\cite{me:rest-wilkie} gives results that are strictly stronger than
the results obtained in this paper. This can also be generalized to
holomorphic-Pfaffian functions, including elliptic and abelian
functions. The main limitation of this approach is that it does not
seem to apply to period integrals and other maps that arise in
problems related to variation of Hodge structures. It therefore does
not seem to give an approach to effectivizing the main Diophantine
applications considered in
Theorems~\ref{thm:effective-mz}~and~\ref{thm:ao-decidable}. It does
apply in the context considered in Theorem~\ref{thm:torsion-deg}, but
not in the corresponding analog for Shimura varieties briefly
discussed in~\secref{sec:deg-bounds-future}.

An alternative approach based on the theory of Noetherian functions
has been developed in \cite{me:noetherian-pw}. This class does include
period integrals and related maps. The results of the present paper
have four main advantages:
\begin{enumerate}
\item The asymptotic bounds in Theorem~\ref{thm:alg-count} depend
  polynomially on $g,h$, whereas the results of
  \cite{me:noetherian-pw} are for fixed $g$, and sub-exponential
  $e^{\e h}$ in $h$.
\item The asymptotic bounds in Theorem~\ref{thm:alg-count} depend
  polynomially on the degrees of the equations, whereas in
  \cite{me:noetherian-pw} the dependence is repeated-exponential. The
  sharper dependence allows us to obtain the natural asymptotic
  estimates in the Diophantine applications, leading for instance to
  polynomial-time algorithms.
\item The results of \cite{me:noetherian-pw} deal strictly with
  \emph{semi-Noetherian} sets, i.e. sets defined by means of
  equalities and inequalities but \emph{no
    projections}. Theorem~\ref{thm:alg-count} on the other hand allows
  images under algebraic maps. In many cases, for instance in the
  proof of Theorem~\ref{thm:effective-mz}, the use of projections is
  essential and \cite{me:noetherian-pw} is difficult, if at all
  possible, to use directly.
\item Both the present paper and \cite{me:noetherian-pw} count points
  only in compact domains. However estimates in
  \cite{me:noetherian-pw} grow \emph{polynomially} with the radius $R$
  of a ball containing the domain, whereas in the present paper they
  grow \emph{polylogarithmically}. In many applications this sharper
  asymptotic allows us to deal with non-compact domains by restricting
  to sufficiently large compact subsets.
\end{enumerate}

On the other hand, the approach of \cite{me:noetherian-pw} has one
main advantage: it gives bounds independent of the log-heights of the
equations and the distance to the unlikely intersection
locus. Unfortunately the technical tools used in
\cite{me:noetherian-pw} to achieve this are of a very different nature
and we currently do not see a way to combine these approaches. This
seems to be a fundamental difficulty related to Gabrielov-Khovanskii's
conjecture on effective bounds for systems of Noetherian equations
\cite[Conjectures~1,2]{gk:noetherian}, which is formulated in the
local case and is still open even in this context (though see
\cite{me:deflicity} for a solution under a mild condition).

\subsection{Sketch of the proof}
\label{sec:sketch}

In \cite{me:rest-wilkie} the notion of Weierstrass polydiscs was
introduced for the purpose of studying rational points on analytic
sets. In \cite{me:rest-wilkie} the sets under consideration are
Pfaffian, and an analog of Theorem~\ref{thm:zero-count} (with bounds
depending only on $\deg V$) was already available due to Khovanskii's
theory of Fewnomials \cite{khovanskii:fewnomials}. One of the main
results of \cite{me:rest-wilkie} was a corresponding analog of
Theorem~\ref{thm:wp-cover}, established by combining Khovanskii's
estimates with some ideas related to metric entropy.

In the context of arbitrary foliations there is no known analog for
Khovanskii's theory of Fewnomials. It was therefore reasonable to
expect that the first step toward generalizing the results of
\cite{me:rest-wilkie} would be to establish such a result on counting
intersections, following which one could hopefully deduce a result on
covering by Weierstrass polydiscs using a similar
reduction. Surprisingly, our proof does not follow this line. Instead,
we prove Theorems~\ref{thm:zero-count}~and~\ref{thm:wp-cover} by
simultaneous induction, using crucially the Weierstrass polydisc
construction in dimension $n-1$ when proving the bound on intersection
points in dimension $n$. We briefly review the ideas for the two
simultaneous inductive steps below.

\subsubsection{Poof of Theorem~$\ref{thm:zero-count}_n$ assuming
  Theorem~$\ref{thm:zero-count}_{n-1}$ and Theorem~$\ref{thm:wp-cover}_n$}

We start by reviewing the argument for one-dimensional
foliations. This case is considerably simpler and was essentially
treated in \cite{me:poly-zeros}. The problem in this case reduces to
counting the zeros of a polynomial $P$ restricted to a ball $\cB^2$ in
the trajectory $\gamma$ of a polynomial vector field. Our principal
zero-counting tool is a result from value distribution theory (see
Proposition~\ref{prop:jensen}) stating that
\begin{equation}
  \#\{z\in\cB^2 : P(z)=0\} \le \const\cdot\log \frac{\max_{z\in\cB} |P(z)|}{\max_{z\in\cB^2}|P(z)|}.
\end{equation}
In our context the logarithm of the numerator can be suitably
estimated from above easily, and the key problem is to estimate the
logarithm of the denominator from below.

By the Cauchy estimates, it is enough to give a lower bound
\begin{equation}\label{eq:Pk-estimate}
  \log (1/P^{(k)}(0)) \ge \poly(\delta_\xi,\delta_P,\log\dist^{-1}(0,\Sigma_V))
\end{equation}
for some $k=\poly(\delta_\xi,\delta_P)$. Note that $P^{(k)}=\xi^k P$
are themselves polynomials. Using multiplicity estimates
(e.g. \cite{nesterenko:mult-nonlinear,gabrielov:mult}) one can show
that for $\mu=\poly(\delta_\xi,\delta_P)$, the ideal generated by
these polynomials for $k=1,\ldots,\mu$ defines the variety
$\Sigma_V$. A Diophantine \Lojas. inequality due to Brownawell
\cite{brownawell:null2} then shows that one of these polynomials can
be estimated from below in terms of the distance to $\Sigma_V$
giving~\eqref{eq:Pk-estimate}.

Consider now the higher dimensional setting, where for instance $V$ is
given by $V(P_1,\ldots,P_n)$. The first difficulty in extending the
scheme above to this context is to find a suitable replacement for the
ideal generated by the $\xi$-derivatives. This problem has been
addressed in our joint paper with Novikov \cite{me:mult-ops}, where we
defined a collection of differential operators $\{\mo_\alpha\}$ of
order $k$ on maps $F:\C^n\to\C^n$, such that all operators $\mo(F)$
vanish at a point if and only if that point is a common zeros of
$F_1,\ldots,F_n$ of multiplicity at least $k$. Combined with the
multidimensional multiplicity estimates of Gabrielov-Khovanskii
\cite{gk:noetherian} this allows one to find a multiplicity operator
$\mo(P)$ of absolute value comparable to $\dist(\cB,\Sigma_V)$ (see
Proposition~\ref{cor:mo-v-dist}).

The other, more substantial, difficulty is to find an appropriate
analog for the value distribution theoretic statement. It is well
known that the Nevanlinna-type arguments used above in dimension one
generally become much more complicated to carry out for sets of
codimension greater than one, and indeed this has been the primary
reason that many works on point-counting using value distribution have
been restricted to the one-dimensional case.

Our main new idea is that one can overcome this difficulty by
appealing to the notion of Weierstrass polydiscs. Namely, using the
inductive hypothesis we may reduce to studying the common zeros of
$P_1,\ldots,P_n$ inside a Weierstrass polydisc
$\Delta:=D_z\times\Delta_w$ for the curve
\begin{equation}
  \Gamma:=\cB\cap V(P_1,\ldots,P_{n-1}).
\end{equation}
This is equivalent to studying the zeros of the \emph{analytic
  resultant}
\begin{equation}
  \cR(z) = \prod_{w:(z,w)\in\Gamma\cap\Delta} P_n(z,w).
\end{equation}
We are thus reduced to the case of holomorphic functions of one
variable, and it remains to show that $\cR(z)$ can be estimated from
below in terms of the multiplicity operators (similar to how $P(z)$
was estimated from below in terms of the usual derivatives in the one
dimensional case). This is indeed possible, using some properties of
multiplicity operators developed in \cite{me:mult-ops}, and the
precise technical statement is proved in
Lemma~\ref{lem:mo-v-resultant}.

\subsubsection{Proof of Theorem~$\ref{thm:wp-cover}_n$ assuming
  Theorem~$\ref{thm:zero-count}_{n-1}$}

In \cite{me:rest-wilkie} the proof of the analog of
Theorem~\ref{thm:wp-cover} was based on a simple geometric
observation. Namely, one shows that to construct Weierstrass polydisc
containing a ball of radius $r$ around the origin for a set
$X\subset\cB$ it is essentially enough to find a ball $B'\subset\cB$
of radius $\sim r$ disjoint from $S^1\cdot X$ (where $S^1$ acts on
$\cB$ by scalar multiplication).

To find such a ball, in \cite{me:rest-wilkie} we appeal to Vitushkin's
formula. Unfortunately this real argument would require restricting to
real codimension one sets. Since our inductions works by decreasing
the complex dimension (in order to use arguments from value
distribution theory), this approach is not viable in our
case. Instead, we show in Proposition~\ref{prop:empty-ball} that one
can always find a ball $B'$ as above with
\begin{equation}\label{eq:intro-wp-v-vol}
  1/r = O(\sqrt[\alpha]{\vol(X)}), \qquad \alpha:=2n-2m-1.
\end{equation}
The proof is based on the fact that the volume of a complex analytic
set passing through the origin of a ball of radius $\e$ is at least
$\const\cdot\e^{2\dim X}$. An analytic set that meets many disjoint
balls must therefore have large volume. We remark that this is an
essentially complex-geometric statement which fails in the real
setting.

Having established the estimate~\eqref{eq:intro-wp-v-vol}, we see that
to construct a reasonably large Weierstrass polydisc around the origin
for $\cB\cap V$ (and then cover $\cB^2$ by a simple subdivision
argument) it is enough to estimate the volume of this set. Moreover, a
simple integral estimate shows that having found such a Weierstrass
polydisc $\Delta$, the multiplicity $e(X,\Delta)$ is also upper
bounded in terms of $\vol(\cB\cap V)$. We reduce the estimation of
this volume, using a complex analytic version of Crofton's formula, to
counting the intersections of $\cB\cap V$ with all linear planes of
complementary dimension. We realize these planes as leafs of a new
(lower-dimensional) foliated space and finish the proof by inductive
application of Theorem~\ref{thm:zero-count}.

\subsubsection{Under the rug}

The two inductive steps of our proof are carried out by restricting
our foliation $\cF$ to its linear sub-foliations (where the leafs are
given by linear subspaces, in the $\vxi$-variables, of the original
leafs). It may happen coincidentally that new unlikely intersections
are created in this process. For example, if $P_1,P_2$ are two
polynomial equations intersecting properly with a two-dimensional leaf
$\cL_p$, it may happen that the restriction of $P_1$ to some
one-dimensional $\vxi$-linear subspace of $\cL_p$ vanishes
identically. In this case one cannot control the
$\log\dist^{-1}(\cB,\Sigma_V)$ term coming up in the induction.

To avoid this problem, we note that the particular choice of linear
$\vxi$-coordinates plays no special role in the argument, and one can
use any other parametrization (sufficiently close to the identity to
maintain control over the distortion of the $\vxi$-unit balls). We
therefore replace the vector fields $\vxi$ by a new tuple $\tilde\vxi$
generating the same foliation $\cF$, but producing a different
parametrization of the leafs. We show that for a sufficiently generic
choice of $\tilde\vxi$ one can avoid creating new unlikely
intersections in any of the linear sections considered in the
proof. The main technical difficulty is to show that $\tilde\vxi$ can
be constructed with $\delta_\vxi=\poly(\delta_\vxi,\delta_V)$.

\subsubsection{Counting algebraic points}

Having proved the general results on counting intersection points
between algebraic varieties and leafs and covering such intersections
with a bounded number of Weierstrass polydiscs, one can attempt to
approach a Pila-Wilkie type counting theorem using the strategy
employed in \cite{me:analytic-interpolation,me:rest-wilkie}. A direct
application of this strategy yields adequate estimates for the
algebraic points in a fixed number field (as a function of height),
but fails to produce such estimates when one fixes only the degree of
the number field. To achieve this greater generality we use an
alternative approach suggested by Wilkie in \cite{wilkie:pw-notes},
which replaces the interpolation determinant method by a use of the
Thue-Siegel lemma. We remark that Habegger has used this approach in
his work on an approximate Pila-Wilkie type theorem
\cite{habegger:approx}, and our result is influenced by his idea.

Since we, unlike Wilkie and Habegger, use Weierstrass polydiscs in
place of the traditional $C^r$-smooth parametrization some technical
preparations parallel to \cite{wilkie:pw-notes,habegger:approx} must
be made. This material is developed in~\secref{sec:interpolation}.

\section{Multiplicity operators and local geometry on $\cF$}
\label{sec:mult-ops}

Let $F=(F_1,\ldots,F_n)$ denote an $n$-tuple of holomorphic functions
in some domain $\Omega\subset\C^n$. The paper \cite{me:mult-ops}
defines a collection $\{\bmo_B^\alpha\}$ of ``basic multiplicity
operators'' of order $k$. These are partial differential operators of
order $k$, i.e. polynomial combinations of $F_1,\ldots,F_n$ and their
first $k$ derivatives\footnote{We remark that in \cite{me:mult-ops} a
  general multiplicity operator is defined as an element of the convex
  hull of the basic ones; however in this paper, since we are
  concerned with heights over a number field, we will stick to using
  only the basic operators and write ``multiplicity operator'' for a
  basic operator.}. We will usually denote a multiplicity operator of
order $k$ by $\mo$ and write $\mo_p(F)$ for $[\mo(F)](p)$.

The key defining property of the multiplicity operators is the
following. Denote by $\mult_p F$ the multiplicity of $p$ as a common
zero of $F_1,\ldots,F_n$ (with $\mult_pF=0$ if $p$ is not a common
zero and $\mult_p F=0$ if $p$ is a non-isolated zero).
\begin{Prop}[\protect{\cite[Proposition~5]{me:mult-ops}}]\label{prop:mo-basic}
  We have $\mult_p F>k$ if and only if $\mo_pF=0$ for all multiplicity
  operators of order $k$.
\end{Prop}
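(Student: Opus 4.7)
The plan is to prove both directions of the equivalence by induction on the order $k$, mirroring the inductive construction of the basic operators $\bmo_B^\alpha$ themselves. The base case $k=0$ is immediate: the only operators of order $0$ are $F_1,\ldots,F_n$, and they all vanish at $p$ iff $p$ is a common zero, i.e.\ $\mult_p F \ge 1 > 0$. For the inductive step I would separate the two implications.

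For the forward direction, assume $\mult_p F > k$. Each $\mo$ of order $k$ is a polynomial combination of $F_1,\ldots,F_n$ and their partial derivatives of orders $\le k$. Using the characterization $\mult_p F = \dim_\C \O_p/(F_1,\ldots,F_n)$ for isolated zeros, and the Koszul/Jacobian description of this length, I would argue that any such polynomial combination is expressible in the ideal $\mathfrak m_p^{\mult_p F - k}$; when $\mult_p F > k$ this forces $\mo_p F = 0$. The non-isolated case is handled by a density/deformation argument: perturb $F$ to nearby isolated-zero data, apply the isolated statement, and pass to the limit (all operators being continuous in the jets of $F$).

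For the converse, assuming $\mult_p F \le k$ I need to \emph{exhibit} an order-$k$ operator that does not vanish. The model to keep in mind is the case $n=1$, where $F^{(j)}(p)$ for $j\le k$ detects the multiplicity exactly. For $n\ge 2$ the analog is obtained by iterating a Jacobian/resolvent construction: if $J=\det(\partial F_j/\partial x_i)$, then $J(p)\neq0$ characterizes $\mult_p F =1$; for higher multiplicities one forms combinations of $F_i$ and partials that, when $J$ is viewed as a formal inverse, correspond to iterated derivations along $F$-transverse directions. The family $\{\bmo_B^\alpha\}$ of \cite{me:mult-ops} encodes exactly these iterated formal divisions in polynomial form, and one verifies by an explicit calculation in the Weierstrass-prepared local model that at least one such polynomial evaluates to a nonzero element of $\O_p/\mathfrak m_p^{k+1}$.

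The main obstacle is the converse direction: the challenge is to show that the combinatorial family of basic operators is \emph{exhaustive}, in the sense that for every possible shape of the local algebra $\O_p/(F_1,\ldots,F_n)$ of length $\le k$ at least one of the $\bmo_B^\alpha$ detects it. The one-dimensional case is misleadingly simple because a single operator $F^{(k)}$ suffices, whereas in higher dimensions one must cover the many combinatorial types of multiplicity structures (captured by the multi-index $\alpha$). Verifying this exhaustiveness — rather than merely producing many candidate operators — is the heart of the argument and is where I would expect to lean most heavily on the explicit inductive construction of \cite{me:mult-ops}.
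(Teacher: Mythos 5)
This proposition is not proved in the paper at all: it is imported verbatim as Proposition~5 of \cite{me:mult-ops}, so the only available comparison is between your sketch and the construction in that reference, and your sketch has a concrete gap in the forward direction. You claim that \emph{any} polynomial combination of $F_1,\ldots,F_n$ and their partial derivatives of order $\le k$ lies in $\mathfrak m_p^{\mult_p F-k}$, hence vanishes when $\mult_p F>k$. This is false as a statement about general combinations: take $n=2$, $F=(x^2,y)$, $p$ the origin, so $\mult_pF=2>1$, yet the first-order partial $\partial_y F_2\equiv 1$ does not vanish at $p$, and neither does any combination involving it nontrivially. The entire content of the cited result is that the \emph{specific} combinations $\bmo_B^\alpha$ (built from an iterated formal division/Jacobian process) do vanish to the right order; the forward direction therefore cannot be obtained from a generic commutative-algebra argument about differential polynomials and must use the actual inductive definition of the operators. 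The perturbation argument for non-isolated zeros also needs care, since $\mult_pF$ is declared to be $0$ (not $\infty$) at a non-isolated zero, so ``$\mult_pF>k$'' never holds there and that case is vacuous rather than requiring a limit argument.

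For the converse you candidly state that exhaustiveness of the family $\{\bmo_B^\alpha\}$ --- the fact that for every local algebra of length $\le k$ at least one basic operator is nonzero --- is the heart of the matter and that you would ``lean most heavily on the explicit inductive construction of \cite{me:mult-ops}.'' That is precisely the part that is not supplied, so what you have is a plan that defers the essential step to the very reference being reproved. Since the present paper treats the proposition as a black box, the two defensible options are to cite it as the paper does, or to genuinely reproduce the argument of \cite{me:mult-ops}; the intermediate sketch does neither and, in the forward direction, rests on an assertion that is false for general polynomial combinations.
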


\subsection{Multiplicity operators and Weierstrass polydiscs}

In this section we denote by $B\subset\C^n$ the unit ball. The norm
$\norm{\cdot}$ always denotes the maximum norm. We will need the
following basic lemma on multiplicity operators.

\begin{Lem}\label{lem:mo-lower-bound}
  Let $F_1,\ldots,F_n:B\to D(1)$. Suppose that $s=\abs{\mo_0F}\neq0$
  for some multiplicity operator $\mo$. Let $\ell\in(\C^n)^*$ have
  unit norm and let $0<\rho<s$. Then there is a ball $B'$ around the
  origin of radius at least $s/\poly_n(k)$ and a union of at most $k$
  discs $U_\rho$ of total radius at most $\poly_n(k)\cdot\rho$ such
  that
  \begin{equation}
    \vz\in B'\setminus\ell^{-1}(U_\rho) \implies \log \norm{F(\vz)} \ge (k+1)\log\rho-\poly_n(k).
  \end{equation}
\end{Lem}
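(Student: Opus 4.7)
After a unitary change of coordinates I may assume $\ell(\vz) = z_1$, since the class of basic multiplicity operators of order $k$ is stable under such changes (possibly after replacing $\mo$ by another operator of the same order with comparable value). The plan is to extract from the non-vanishing of $\mo_0 F$ a scalar holomorphic witness $\Phi$ that is essentially a polynomial of degree $\le k$ in $z_1$, and then apply a Cartan-type lower bound slicewise in $z_1$.

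Concretely, I would produce $\Phi\in\cO(B)$ with two properties: (i) $\Phi$ belongs to the ideal $(F_1,\ldots,F_n)$ with cofactors that are polynomial combinations of the $F_j$ and their first $k{-}1$ derivatives, so that the Cauchy estimates applied to $F_1,\ldots,F_n:B\to D(1)$ yield $|\Phi(\vz)|\le e^{\poly_n(k)}\,\norm{F(\vz)}$ on $B^2$; and (ii) when restricted to the $z_1$-axis, $\Phi$ is a Weierstrass-type polynomial of degree $d\le k$ in $z_1$ whose leading coefficient at the origin has absolute value at least $s/\poly_n(k)$. Producing $\Phi$ with this quantitative structure is the heart of the argument and is the content the multiplicity-operator framework of \cite{me:mult-ops} is designed to deliver: a basic multiplicity operator records, in its non-vanishing, an explicit Weierstrass-like element of the ideal.

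Given $\Phi$, let $B'$ be the ball of radius $s/\poly_n(k)$ around the origin, chosen small enough that the $z_1$-coefficients of $\Phi(\cdot,\vz')$ deviate from those of $\Phi(\cdot,0)$ by a small fraction of the leading-coefficient lower bound. For each fixed $\vz'$ with $(0,\vz')\in B'$, the slice $z_1\mapsto\Phi(z_1,\vz')$ is a holomorphic function bounded by $e^{\poly_n(k)}$ on a disc of radius $\sim s/\poly_n(k)$, with at most $d\le k$ zeros in that disc by Jensen's formula together with property~(ii). A quantitative Cartan lemma then gives, for each $\vz'$, an exceptional set of at most $d$ discs in $\C$ of total radius $\poly_n(k)\cdot\rho$ outside of which $|\Phi(z_1,\vz')|\ge\rho^d/e^{\poly_n(k)}\ge\rho^{k+1}/e^{\poly_n(k)}$.

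The remaining step is to show that a single exceptional set $U_\rho\subset\C$ suffices uniformly in $\vz'$. Rouch\'e applied to $\Phi(\cdot,\vz')$ in its Weierstrass-polynomial form shows that the roots of $\Phi(\cdot,\vz')$ move by at most $\rho$ from those of $\Phi(\cdot,0)$ as $\vz'$ ranges over $B'$, so the $(\poly_n(k)\cdot\rho)$-enlargement of the $\le k$ roots of $\Phi(\cdot,0)$ captures every bad $z_1$ value simultaneously. Combining the upper bound $|\Phi|\le e^{\poly_n(k)}\,\norm{F}$ with the slicewise lower bound then yields $\log\norm{F(\vz)}\ge(k+1)\log\rho-\poly_n(k)$ on $B'\setminus\ell^{-1}(U_\rho)$, as required. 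The principal obstacle is property~(ii): producing $\Phi$ whose leading Taylor coefficient at the origin is bounded below by $s/\poly_n(k)$ directly from the scalar data $|\mo_0 F|=s$, which is precisely where the specific combinatorial structure of the multiplicity operators from \cite{me:mult-ops} is essential.
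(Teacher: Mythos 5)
Your overall architecture --- reduce to $\ell=z_1$, extract from the non-vanishing of $\mo_0F$ a scalar element $\Phi$ of the ideal $(F_1,\ldots,F_n)$ behaving like a degree-$\le k$ polynomial in $z_1$ with leading coefficient comparable to $s$, then run a Cartan-type estimate slicewise and unify the exceptional sets --- is a plausible reconstruction of the argument that lives inside \cite{me:mult-ops}. But as written this is not a proof: the step you yourself flag as ``the heart of the argument'' (property~(ii), the quantitative Weierstrass-type witness) \emph{is} the content of \cite[Theorem~2]{me:mult-ops}, and the paper's proof of this lemma consists of nothing more than invoking the proof of that theorem and verifying that the two constants appearing in it, $C_k$ and $\CD_{n,k}$, satisfy $C_k=2^{-O(k)}$ (from \cite[Lemma~4.1]{me:polyfuchs}) and $\log\CD_{n,k}=\poly_n(k)$ (a Cramer's-rule determinant of size $\poly_n(k)$). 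So the entire mathematical content --- both the existence of the witness and the fact that all losses are only $e^{\poly_n(k)}$ --- sits inside the black box you have deferred. Identifying where the difficulty lies is not the same as resolving it.

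Two further points on the scaffolding you did supply. First, the opening reduction to $\ell=z_1$ is not free: a basic multiplicity operator in the original coordinates is a combination of (a priori exponentially many in $k$) basic operators in the rotated coordinates, so the best one can immediately guarantee is a new operator with value $s'\ge s\,e^{-\poly_n(k)}$ rather than $s/\poly_n(k)$; this would degrade the radius of $B'$ to $s\,e^{-\poly_n(k)}$, which is strictly weaker than the stated $s/\poly_n(k)$. The cited theorem avoids this by handling an arbitrary direction $\ell$ directly, which is presumably why the lemma is phrased with a general unit functional rather than a coordinate. Second, an element of the ideal $(F_1,\ldots,F_n)$ cannot literally be a polynomial in $z_1$; it is at best a Weierstrass polynomial times a unit, and bounding that unit above and below on a ball of radius comparable to $s$ is again part of the constant-chasing the actual proof must do (your appeal to Jensen for the zero count, as stated, would give a bound depending on $\log(1/s)$ rather than $k$ unless the polynomial structure is exact). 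Your accounting of the exponent --- degree $\le k$ from Cartan plus one extra factor of $\rho$ absorbed by the leading coefficient $\ge s>\rho$ --- is the right explanation of the $(k+1)\log\rho$ in the statement, so the shape is correct; what is missing is precisely the part that needs proving.
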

\begin{proof}
  The statement follows from the proof of
  \cite[Theorem~2]{me:mult-ops}. To see this it suffices to check in
  the proof that the various constants appearing there indeed have
  logarithms of order $\poly_n(k)$. This boils down to estimating the
  constants $C_k$ and $\CD_{n,k}$. The former is given explicitly in
  \cite[Lemma~4.1]{me:polyfuchs} in the form $C_k=2^{-O(k)}$. The
  latter arises in the proof of \cite[Proposition~6]{me:mult-ops} from
  applying Cramer's rule to a determinant of size $\poly_n(k)$, and is
  easily seen to satisfy $\log\CD_{n,k}=\poly_n(k)$.
\end{proof}

We now state a result relating the multiplicity operators to the
construction of a Weierstrass polydisc for a curve.

\begin{Lem}\label{lem:mo-v-wp}
  Let $F_1,\ldots,F_{n-1}:B\to D(1)$. Suppose that
  $s=\abs{\mo_0F}\neq0$ for some $(n-1)$-dimensional multiplicity
  operator $\mo$ with respect to the variables
  $\vw=z_2,\ldots,z_n$. Then there exists a Weierstrass polydisc in
  the standard coordinates $\Delta=D(r_1)\times\cdots\times D(r_n)$
  with all the radii satisfying
  \begin{equation}
    \log r_i \ge \poly_n(k) \log s.
  \end{equation}
\end{Lem}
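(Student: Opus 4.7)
The plan is to apply Lemma~\ref{lem:mo-lower-bound} to the restriction $G(\vw):=F(0,\vw)$, once for each coordinate projection $\ell_j(\vw):=w_j$ with $j=2,\ldots,n$, and then to transfer the resulting lower bounds from $z_1=0$ to a thin slab $|z_1|\le r_1$ by Cauchy estimates. For each such $j$, Lemma~\ref{lem:mo-lower-bound} applied to $G$ (viewed as an $(n-1)$-tuple in $\vw=(z_2,\ldots,z_n)$) yields a max-norm ball $B'_j\subset\C^{n-1}$ of radius $\ge r^*:=s/\poly_n(k)$, uniform in $j$, and, for each $\rho<s$, a bad set $\ell_j^{-1}(U_\rho^{(j)})=\{w_j\in U_\rho^{(j)}\}$ with $U_\rho^{(j)}\subset\C$ a union of at most $k$ discs of total radius $\le\poly_n(k)\rho$, outside of which $\log\|G\|\ge(k+1)\log\rho-\poly_n(k)$.

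The key geometric step is to pick radii $r_j\in[r^*/2,r^*]$ so that the circle $\{|z|=r_j\}\subset\C$ avoids $U_\rho^{(j)}$. For each $j$ the set of bad $r_j$ has Lebesgue measure $\le 2\poly_n(k)\rho$, since a disc of radius $r'$ in $U_\rho^{(j)}$ intersects $\{|z|=r_j\}$ only when $r_j$ lies in an interval of length $2r'$. Fixing $\rho:=r^*/\poly_n(k)$ for a sufficiently large polynomial leaves ample freedom to choose each $r_j$. With these choices the polydisc $\Delta_w:=\prod_{j=2}^n D(r_j)$ sits inside $\bigcap_j B'_j$, and any point $\vw$ on the topological boundary $\partial\Delta_w=\bigcup_j\{|w_j|=r_j,\ |w_i|\le r_i\,\forall i\ne j\}$ lies on some face with $|w_j|=r_j$, hence $w_j\notin U_\rho^{(j)}$, placing $\vw$ outside $\ell_j^{-1}(U_\rho^{(j)})$; so $\|G(\vw)\|\ge\rho^{k+1}/\poly_n(k)>0$ uniformly on $\partial\Delta_w$.

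To produce the Weierstrass polydisc, I promote this bound to a slab in $z_1$. Since $\|F\|\le 1$ on $B$, Cauchy's inequality gives $|\partial F_i/\partial z_1|\le O_n(1)$ on $B^2$, so $\|F(z_1,\vw)-G(\vw)\|\le O_n(1)|z_1|$ for $(z_1,\vw)\in B^2$. Setting $r_1:=\rho^{k+1}/\poly_n(k)$ with a sufficiently large polynomial therefore forces $\|F\|>0$ throughout $\bar D(r_1)\times\partial\Delta_w$, which is exactly the Weierstrass condition $V(F_1,\ldots,F_{n-1})\cap(\bar D(r_1)\times\partial\Delta_w)=\emptyset$. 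Chasing $r^*\gtrsim s/\poly_n(k)$, $\rho\gtrsim s/\poly_n(k)$, $r_j\gtrsim r^*$, and $r_1\gtrsim\rho^{k+1}/\poly_n(k)$ yields $\log r_i\ge\poly_n(k)\log s$ for every radius (using $s<1$).

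The main obstacle is geometric rather than analytic: the bad set $\ell^{-1}(U_\rho)$ from Lemma~\ref{lem:mo-lower-bound} is aligned with an arbitrary complex hyperplane direction $\ell$, while the topological boundary of a polydisc has faces $\{|w_j|=r_j\}$ along every coordinate, whose images under a generic $\ell$ can sweep out entire discs that cannot be avoided. Specializing $\ell$ to each standard coordinate projection $\ell_j$ collapses the bad set to a single-coordinate constraint, which can then be evaded independently on each face by a one-dimensional tuning of the corresponding $r_j$.
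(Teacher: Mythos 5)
Your proposal is correct and follows essentially the same route as the paper: apply Lemma~\ref{lem:mo-lower-bound} to $F(0,\vw)$ with $\ell$ equal to each coordinate $w_j$, choose each $r_j$ so that $\partial D(r_j)$ misses the corresponding $U_\rho^{(j)}$ while $\Delta_w$ stays inside the balls $B'$, and then use the Cauchy estimate on the first derivatives to propagate the lower bound on $\partial\Delta_w$ to a slab $|z_1|\le r_1$. The only difference is that you make explicit the measure-theoretic argument for selecting the $r_j$, which the paper leaves implicit.
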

\begin{proof}
  We claim that one can find a polydisc
  $\Delta_w=D(r_2)\times\cdots\times D(r_n)$ such that
  \begin{equation}
    \log \norm{F(0,\vw)}\ge (k+1)\log s-\poly_n(k) \qquad \text{for every }\vw\in\partial\Delta_w
  \end{equation}
  and moreover
  \begin{equation}
    \log r_i \ge \poly_n(k)\log s \text{ for } i=2,\ldots,n.
  \end{equation}
  To prove this apply Lemma~\ref{lem:mo-lower-bound} to $F(0,\vw)$
  with $\ell$ given by each of the $\vz_2,\ldots,\vz_n$-coordinates
  with a suitable choice $\rho=s/\poly_n(k)$, and then choose
  $\Delta_w$ to be a polydisc inside the balls $B'$ and with each
  $\partial D(r_j)$ disjoint from the set $U_\rho$ obtained for
  $\ell=\vz_j$.

  Since $F_1,\ldots,F_{n-1}$ have unit maximum norms, their
  derivatives are bounded by $O(1)$ in $B^2$ by the Cauchy
  estimate. It follows that $F(z,\vw)$ cannot vanish on
  $\partial\Delta_w$ for $z\in D(r_1)$ where
  \begin{equation}
    \log r_1\sim (k+1)\log s-\poly_n(k)
  \end{equation}
  so $D(r_1)\times\Delta_w$ indeed gives a Weierstrass polydisc
  satisfying the final condition $\log r_1\ge\poly_n(k) \log s$.
\end{proof}

Suppose that $\Gamma\subset\C^n$ is an analytic curve,
$\Delta=D_z\times\Delta_w$ is a Weierstrass polydisc for $\Gamma$ and
$G:\Delta\to\C$ is holomorphic.

\begin{Def}
  We define the \emph{analytic resultant} of $G$ with respect to
  $\Delta$ to be the holomorphic function
  $\cR_{\Delta,\Gamma}(G):D_z\to\C$ given by
  \begin{equation}\label{eq:analytic-res}
    \cR_{\Delta,\Gamma}(G) = \prod_{w:(z,w)\in\Gamma\cap\Delta} G(z,w).
  \end{equation}
\end{Def}

Our second result concerns a lower estimate for analytic resultants in
terms of multiplicity operators.

\begin{Lem}\label{lem:mo-v-resultant}
  Let $F_1,\ldots,F_n:B\to D(1)$ be holomorphic. Set
  $\Gamma=\{F_1=\cdots=F_{n-1}=0\}$ and suppose that
  $\Delta=D(r)\times\Delta_w\subset B$ is a Weierstrass polydisc in
  the standard coordinates for $\Gamma$ with multiplicity
  $\mu$. Suppose that $s=\abs{\mo_0(F)}\neq0$ for some multiplicity
  operator $\mo$. Let $0<\rho<s$. Then for $z$ in a ball of radius
  $\Omega_n(s)$ around the origin and outside a union of balls of
  radius $O_n(\rho)$ we have
  \begin{equation}
    \log |R(z)| > \mu\cdot((k+1)\log\rho-\poly_n(k)), \qquad R:=\cR_{\Delta,\Gamma}(G):D(r)\to\C.
  \end{equation}
\end{Lem}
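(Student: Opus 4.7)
My plan is to reduce the desired lower bound on the one-variable resultant $R$ to a pointwise lower bound on $|F_n|$ along $\Gamma$, and then sum the latter over the $\mu$ sheets of the Weierstrass covering.

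First I would apply Lemma~\ref{lem:mo-lower-bound} to the full $n$-tuple $F=(F_1,\ldots,F_n)$ with $\ell$ chosen to be the projection onto the $z$-coordinate. This produces a polydisc $B'$ centered at the origin of polyradius at least $s/\poly_n(k)$ and a union $U_\rho$ of at most $k$ discs in the $z$-plane of total radius at most $\poly_n(k)\cdot\rho$, satisfying
\begin{equation*}
  (z,w)\in B'\setminus(U_\rho\times\C^{n-1}) \implies \log\|F(z,w)\|\ge(k+1)\log\rho-\poly_n(k).
\end{equation*}

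Next, since $F_1,\ldots,F_{n-1}$ vanish identically on $\Gamma$, the max-norm $\|F(z,w)\|$ restricted to $\Gamma$ reduces to $|F_n(z,w)|$. Combining this with the previous step yields, for any point $(z,w)\in\Gamma\cap B'$ with $z\notin U_\rho$, the bound $\log|F_n(z,w)|\ge(k+1)\log\rho-\poly_n(k)$.

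The analytic resultant factors as $R(z)=\prod_{i=1}^\mu F_n(z,w_i(z))$, where the $(z,w_i(z))$ are the $\mu$ preimages of $z$ under the ramified cover $\Gamma\cap\Delta\to D(r)$. Summing the previous inequality over the $\mu$ sheets gives
\begin{equation*}
  \log|R(z)|=\sum_{i=1}^\mu\log|F_n(z,w_i(z))|\ge\mu\cdot\bigl((k+1)\log\rho-\poly_n(k)\bigr),
\end{equation*}
provided every sheet satisfies $(z,w_i(z))\in B'$ and $z\notin U_\rho$. The exclusion of $U_\rho$ corresponds exactly to the union of discs of total radius $O_n(\rho)$ appearing in the statement.

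The main obstacle is verifying the sheet-containment condition $(z,w_i(z))\in B'$. Restricting $z$ to a ball of radius $\Omega_n(s)$ takes care of the first coordinate; for the fiber coordinate $w_i(z)$ one must invoke the Weierstrass polydisc structure for $\Gamma$ together with the Cauchy estimates on the bounded functions $F_1,\ldots,F_{n-1}$ to control $\|w_i(z)\|$ when $z$ is small. I expect this sheet-control argument to parallel the construction in Lemma~\ref{lem:mo-v-wp}, where an analogous shrinking of a Weierstrass polydisc is carried out; this is the quantitative core of the proof, and once it is in place the other steps are formal.
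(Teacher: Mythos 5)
Your proposal follows essentially the same route as the paper's proof: apply Lemma~\ref{lem:mo-lower-bound} with $\ell$ the first coordinate, observe that $\norm{F}=|F_n|$ on $\Gamma$, and multiply the resulting pointwise bound over the $\mu$ sheets of the Weierstrass covering. The sheet-containment issue you flag as the remaining obstacle is exactly the point the paper dispatches with the single assertion that the bound holds ``in particular'' at the $\mu$ points of $\Gamma$ over $z$ (which in the intended application is ensured by choosing the polydisc $\Delta_w$ inside the ball $B'$, as in Lemma~\ref{lem:mo-v-wp}), so you have identified rather than missed the one step the paper leaves implicit.
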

\begin{proof}
  Apply Lemma~\ref{lem:mo-lower-bound} with $\ell=\vz_1$ and
  $\rho$. We see that $\log \norm{F(\vz)}\ge(k+1)\log\rho-\poly_n(k)$ in
  a ball $B'$ of radius $\Omega_n(s)$ whenever $\vz_1$ lies outside
  $U_\rho$. In particular this is true for the $\mu$ points over
  $\vz_1$ where $F_1,\ldots,F_{n-1}$ vanish, and at these points we
  obtain the same estimate for $\log|F_n(\vz)|$. Taking product over
  the $\mu$ different points proves the statement.
\end{proof}

\subsection{Multiplicity operators along $\cF$}

When $P=(P_1,\ldots,P_n)\in\cO(\M)^n$ we may apply the multiplicity
operator $\mo$ to $P$ by evaluating the derivatives along
$\vxi_1,\ldots,\vxi_n$. This amount to computing, for each point
$p\in\M$, the multiplicity operator of $P\rest\cL_p$ in the
$\vxi$-chart. 

\begin{Lem}\label{lem:mo-complexity}
  For any multiplicity operator $\mo$ we have
  \begin{equation}
    \delta(\mo P) = \poly(\delta_P,\delta_\vxi,k).
  \end{equation}
\end{Lem}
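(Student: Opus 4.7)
The plan is to unwind the definition of a multiplicity operator and to track degrees and heights through the two elementary operations used to build it: iterated $\vxi$-differentiation of the polynomials $P_i$, and a fixed polynomial combination of the resulting derivatives. Throughout, I will use the standard fact that for polynomials $f,g$ over $\K$ one has $\deg(fg)\le\deg f+\deg g$, $h(fg)\le h(f)+h(g)+O(\log(1+\deg f+\deg g))$, and that if $\eta$ is a rational vector field with $\delta_\eta$-bounded degree and log-height then the degree and log-height of $\eta f$ are bounded by $\deg f+\deg\eta$ and $h(f)+h(\eta)+O(\log(1+\deg f+\deg\eta))$ respectively (after clearing the common denominator coming from the polar locus of the $\xi_i$, whose degree and height are themselves $\poly(\delta_\vxi)$).

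First I would prove, by induction on $|\alpha|\le k$, that for each multi-index $\alpha$ the rational function $\vxi^\alpha P_i$ admits an expression $N_{i,\alpha}/D_\alpha$ with $D_\alpha$ a power of a fixed polynomial of $\poly(\delta_\vxi)$-bounded complexity supported on $\Sigma_\cF$, and with
\begin{equation}
\deg N_{i,\alpha},\,h(N_{i,\alpha})=\poly(\delta_P,\delta_\vxi,k).
\end{equation}
The degree bound is immediate (each derivation adds $O(\delta_\vxi)$ to the degree), and the height bound follows because the inductive step is a finite sum of products $\xi_j\cdot \vxi^{\alpha'}P_i$, each estimated by the product and sum formulas recalled above; since $|\alpha|\le k$ the arising constants accumulate at most polynomially in $k$.

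Second, I would invoke the explicit construction of the basic multiplicity operators $\bmo_B^\alpha$ from \cite{me:mult-ops}. By their definition each $\bmo(P)$ is a polynomial expression in the $\vxi^\alpha P_i$ with $|\alpha|\le k$, and the combinatorial data of this expression (number of monomials, degrees and heights of the integer or rational coefficients appearing) is bounded by $\poly_n(k)$; this is exactly the same bookkeeping that gives $\log C_k,\log\CD_{n,k}=\poly_n(k)$ invoked in the proof of Lemma~\ref{lem:mo-lower-bound}. Combining this with the first step via the product and sum estimates yields the claimed bound
\begin{equation}
\delta(\mo P)=\poly(\delta_P,\delta_\vxi,k).
\end{equation}

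The only nontrivial step is the first one, and even there the main thing to verify is that the denominator can be taken to be a \emph{fixed} power of a polynomial of bounded complexity so that the iterated differentiation does not produce denominators of uncontrolled combinatorial size; this is standard and follows from clearing the polar locus of $\vxi$ once at the start and working in the resulting common ring. No step presents a genuine obstacle: the lemma is essentially a bookkeeping exercise, and it is stated here so that the bound on $\delta(\mo P)$ can be plugged into the Diophantine \Lojas. inequality used later to produce Proposition~\ref{cor:mo-v-dist}.
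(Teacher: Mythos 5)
Your proposal is correct and follows essentially the same route as the paper, which simply notes that $\mo$ is obtained by expanding a determinant of size $\poly_n(k)$ whose entries are $\vxi$-derivatives of $P$ up to order $k$, so the bound is a bookkeeping exercise. Your two steps (controlling $\vxi^\alpha P_i$ after clearing the polar-locus denominator once, then bounding the polynomial combination) are exactly the details the paper leaves implicit; the only caveat is that the \emph{number} of monomials in the determinant expansion is not itself $\poly_n(k)$ but only has logarithm $\poly_n(k)$, which is what the height estimate actually needs.
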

\begin{proof}
  This is a simple computation owing to the fact that $\mo$ is defined
  by expanding a determinant of size $\poly_n(k)$ with entries defined
  in terms of $P$ and its $\vxi$-derivatives up to order $k$.
\end{proof}

We will require the following result of Gabrielov-Khovanskii
\cite{gk:noetherian}.

\begin{Thm}\label{thm:gk-mult}
  With $P$ as above and $p\in\M\setminus\Sigma_{V(P)}$,
  \begin{equation}
    \mult_p P < \poly(\deg\vxi, \deg P).
  \end{equation}
\end{Thm}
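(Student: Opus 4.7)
The plan is to reduce the multiplicity bound to a Bezout-style estimate on $\M$ using the multiplicity operators from \cite{me:mult-ops}. By Proposition~\ref{prop:mo-basic}, we have $\mult_p P>k$ if and only if $[\mo(P)](p)=0$ for every multiplicity operator $\mo$ of order $k$. So it suffices to exhibit, at each $p\in\M\setminus\Sigma_{V(P)}$, some $\mo$ of order $k=\poly(\deg\vxi,\deg P)$ with $[\mo(P)](p)\neq 0$.

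First I would introduce the descending chain of algebraic subvarieties
\begin{equation*}
  Z_k := \bigcap_{\mo:\ \ord\mo\le k} V(\mo(P))\ \subset\ \M,
\end{equation*}
observing via Lemma~\ref{lem:mo-complexity} that each $Z_k$ is cut out by polynomials of degree $\poly(\deg\vxi,\deg P,k)$. The crucial geometric input is that the stable limit $Z_\infty$ consists exactly of points $p$ for which $P_1,\ldots,P_n$ have a non-isolated common zero on $\cL_p$ (equivalently, for which every multiplicity operator vanishes at $p$), hence $Z_\infty\subset\Sigma_{V(P)}$. So it would be enough to bound the length until the chain stabilizes by $\poly(\deg\vxi,\deg P)$.

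The heart of the argument is to convert the descending chain in $\M$ into a bounded Bezout count. The strategy I would follow is that of Gabrielov-Khovanskii \cite{gk:noetherian}: lift the foliated data to a jet-type algebraic variety in a larger affine space, whose dimension is $\poly_n$, and whose defining equations have degree controlled by $\deg\vxi$ together with the order of the jets considered. The image in this jet variety of a point of multiplicity $>k$ lies on a prescribed algebraic locus cut out by polynomial equations of degree $\poly(\deg P,\deg\vxi,k)$; outside $\Sigma_{V(P)}$ the intersection is proper, and a Bezout bound on the jet variety then forces $k=\poly(\deg\vxi,\deg P)$.

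The main obstacle I expect is the passage from the qualitative stabilization of $Z_k$ to a polynomial stabilization length: a purely dimension-theoretic argument only produces a bound of the form $\dim\M$ many steps, not a polynomial bound in the degrees, whereas the theorem requires a Bezout-type estimate that genuinely tracks the degrees. Handling this requires the Gabrielov-Khovanskii construction of an auxiliary algebraic variety on which the chain of multiplicity loci pulls back to an intersection of algebraic hypersurfaces of controlled degree, so that ordinary Bezout on that ambient space suffices; absent their construction, one must invest real work in producing it, and this is where the polynomial dependence on $\deg\vxi$ ultimately comes from.
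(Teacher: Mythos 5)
The paper gives no proof of this statement at all: it is imported verbatim as a theorem of Gabrielov--Khovanskii, with \cite{gk:noetherian} cited as the source, and your proposal likewise outsources the entire quantitative content to that same reference, so in substance the two coincide. Two caveats are worth recording. First, your scaffolding via Proposition~\ref{prop:mo-basic} and the chain $Z_k$ is not actually a reduction: by that proposition $Z_k$ is exactly the locus $\{\mult_p P>k\}$, so ``the chain stabilizes by step $k_0=\poly(\deg\vxi,\deg P)$'' is a verbatim restatement of the theorem rather than a step toward it, and the multiplicity operators (which postdate \cite{gk:noetherian}) play no role in the statement being proved --- they are used in the paper only afterwards, in Proposition~\ref{prop:SigmaV-complexity}. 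Second, your description of the Gabrielov--Khovanskii argument as a jet-space Bezout count is not how their proof actually runs (it is a deformation/integral-geometric argument bounding the number of points into which a multiple zero splits), but since you defer to their construction as a black box, exactly as the paper does, this does not affect the correctness of the proposal.
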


As a consequence we have the following.

\begin{Prop}\label{prop:SigmaV-complexity}
  Let $V\subset\M$ be a complete intersection $V=V(P_1,\ldots,P_m)$
  with $m\le n$. Then
  \begin{equation}
    \delta(\Sigma_V) = \poly(\delta_\vxi,\delta_V).
  \end{equation}
  Moreover if $m=n$ then $\Sigma_V$ is set-theoretically cut out by
  the functions $\{\mo(P)\}$ where $\mo$ varies over all multiplicity
  operators of order $k=\poly(\deg\vxi,\deg P)$.
\end{Prop}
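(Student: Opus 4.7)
The plan is to establish the critical case $m=n$ first (where the set-theoretic statement of the proposition is claimed) and then reduce the general bound for $m < n$ to this case by passing to an appropriate sub-foliation.

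\textbf{Case $m=n$.} For $p \in \M \setminus \Sigma_\cF$, the chart $\phi_p$ identifies a neighborhood of $p$ in $\cL_p$ with a neighborhood of $0$ in $\C^n$, and $V \cap \cL_p$ pulls back to the zero locus of the $n$-tuple $F_p := P \circ \phi_p$. The condition $p \in \Sigma_V$ is exactly that $F_p$ has a non-isolated common zero at $0$, i.e.\ $\mult_0 F_p = \infty$. By Theorem~\ref{thm:gk-mult} there is an integer $k^\star = \poly(\deg\vxi, \deg P)$ such that every isolated common zero of $F_p$ has multiplicity at most $k^\star$; hence non-isolatedness is equivalent to $\mult_0 F_p > k^\star$, which by Proposition~\ref{prop:mo-basic} holds if and only if $\mo_p(P) = 0$ for every multiplicity operator $\mo$ of order $k^\star$. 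This yields the set-theoretic claim. For the complexity bound, Lemma~\ref{lem:mo-complexity} gives $\delta(\mo P) = \poly(\delta_\vxi, \delta_V, k^\star) = \poly(\delta_\vxi, \delta_V)$, while $\Sigma_\cF$ is cut out by the denominators of the $\xi_i$ together with the $n \times n$ minors of the matrix $(\xi_i(x_j))$ and therefore satisfies $\delta(\Sigma_\cF) = \poly(\delta_\vxi)$. Standard bounds for unions and intersections of varieties of controlled $\delta$ then give $\delta(\Sigma_V) = \poly(\delta_\vxi, \delta_V)$.

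\textbf{Reduction from $m<n$ to $m=n$.} Replace $\vxi$ by $\vxi' := A\vxi$ for a full-rank integer matrix $A \in \Mat_{m,n}(\Z)$, generating an $m$-dimensional sub-foliation $\cF'$ of $\cF$. In the $\vxi$-chart on $\cL_p$ the leaf $\cL'_p$ is the image of the fixed $m$-plane determined by $A$. The plan is to choose $A$ with polynomially bounded height so that, for every $p \in \M \setminus (\Sigma_\cF \cup \Sigma_{\cF'})$,
\begin{equation*}
\dim_p(V \cap \cL_p) > n-m \iff \dim_p(V \cap \cL'_p) > 0.
\end{equation*}
The direction $(\Rightarrow)$ holds for every full-rank $A$ by a dimension count inside $\cL_p$. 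The converse is a Bertini-type genericity statement: the set $B \subseteq \Mat_{m,n}$ of $A$ for which some $p \notin \Sigma_V$ satisfies $\dim_p(V \cap \cL'_p) > 0$ is the projection to $\Mat_{m,n}$ of an incidence variety of complexity $\poly(\delta_\vxi, \delta_V)$, and hence a proper Zariski-closed subset of degree $\poly(\delta_\vxi, \delta_V)$. A standard pigeonhole argument then produces $A \in \Mat_{m,n}(\Z) \setminus B$ with entries of bit-size $\poly(\delta_\vxi, \delta_V)$, so that $\delta(\vxi') = \poly(\delta_\vxi, \delta_V)$. Applying the $m=n$ case to $(P,\cF')$ now gives $\delta(\Sigma_V) = \delta(\Sigma_{V,\cF'}) = \poly(\delta_\vxi, \delta_V)$.

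\textbf{Main obstacle.} The principal technical difficulty is the quantitative Bertini step: one must produce explicit polynomial equations of controlled degree and height for the bad locus $B$. Conceptually these equations express the failure of the restriction of the quotient map $\cL_p \to \cL_p/\cL'_p$ to $V \cap \cL_p$ to be generically finite, a condition that in principle follows from elimination theory applied to an incidence variety whose complexity is polynomial in $\delta_\vxi, \delta_V$. Once these bounds are secured the selection of a small-height integer $A$ is routine, and the remainder of the argument reduces to direct invocation of the Gabrielov--Khovanskii multiplicity bound, the defining property of multiplicity operators (Proposition~\ref{prop:mo-basic}), and the height estimate of Lemma~\ref{lem:mo-complexity}.
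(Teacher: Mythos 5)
Your treatment of the case $m=n$ is correct and is exactly the paper's argument (Gabrielov--Khovanskii to bound finite multiplicities by $k^\star$, Proposition~\ref{prop:mo-basic} to convert $\mult>k^\star$ into vanishing of the order-$k^\star$ operators, Lemma~\ref{lem:mo-complexity} for the complexity). The gap is in the reduction for $m<n$: the claim that the bad locus $B\subset\Mat_{m,n}$ is a \emph{proper} Zariski-closed subset is false in general, so no single full-rank $A$ need exist. The point is that for each fixed $p\notin\Sigma_V$ the bad set of $m$-planes has codimension as low as $1$ in the Grassmannian, and the union over the positive-dimensional family of points $p$ can exhaust everything. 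Concretely, take $\M=\A^3$ with coordinates $(x,y,t)$, $\xi_1=\partial_x$, $\xi_2=\partial_y$ (so $n=2$, leaves are the planes $t=\const$), and $V=V(xt-y)$, so $m=1$ and $V\cap\cL_p$ is a line in every leaf, whence $\Sigma_V=\emptyset$. For any direction $A=(a_1,a_2)$ with $a_1\neq0$, the leaf $t=a_2/a_1$ meets $V$ exactly in the line through the origin in direction $A$, so every point of that line lies in $\Sigma_{V,\cF'}$ for the sub-foliation $\cF'$ determined by $A$. Thus $B$ is Zariski-dense and the equivalence $\dim_p(V\cap\cL'_p)>0\iff\dim_p(V\cap\cL_p)>n-m$ fails for \emph{every} admissible $A$. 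This is precisely the ``new unlikely intersections under linear sections'' phenomenon the paper discusses; its remedy in \secref{sec:general-position} is to enlarge the family of sections by degree-$D$ reparametrizations so that the bad locus acquires codimension $\ge D$ exceeding the dimension of the incidence variety's base, and no such enlargement is available in your fixed $mn$-dimensional family.

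The paper's actual proof of the $m<n$ case avoids choosing any $A$ at all. It observes that $\dim_p(V\cap\cL_p)>n-m$ holds if and only if for \emph{every} $m$-dimensional $\vxi$-linear subspace $\cL_{p,\vc}$ the intersection $V\cap\cL_{p,\vc}$ has infinite multiplicity at $p$ (one direction is your dimension count; the other is that a generic slice of a proper intersection is isolated). By Theorem~\ref{thm:gk-mult} ``infinite'' may be replaced by ``$>k$'' with $k=\poly(\deg\vxi,\deg P)$, and by Proposition~\ref{prop:mo-basic} this is the vanishing of all order-$k$ multiplicity operators taken with respect to the fields $\vxi_\vc$ with indeterminate coefficients $\vc$. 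Since these operators are polynomial in $\vc$, the universal quantifier over $\vc$ is eliminated by extracting all $\vc$-coefficients, which yields explicit defining equations for $\Sigma_V$ with the complexity bound of Lemma~\ref{lem:mo-complexity}. If you want to salvage your outline, this coefficient-extraction step is what must replace the selection of a single integer matrix $A$.
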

\begin{proof}
  We have $p\in\Sigma_V$ if and only if $p\in\Sigma_\cF$ or
  $\dim(\cL_p\cap V)>n-m$. Since clearly
  $\delta(\Sigma_\cF)=\poly(\delta_\vxi)$ we only have to write
  equations for the latter condition. This is equivalent to the
  statement that for every $\vxi$-linear subspace of $\cL_p$ of
  dimension $m$ the intersection $V\cap L$ is
  non-isolated, i.e. has infinite multiplicity. We express this using
  multiplicity operators as follows.

  Let $\vc^1,\ldots,\vc^m$ be $n$-tuples of indeterminate coefficients
  and let
  \begin{equation}
    \vxi_\vc=(\vc^1\cdot\vxi,\ldots,\vc^m\cdot\vxi)
  \end{equation}
  denote the sub-foliation of $\vxi$ generated by the corresponding
  linear combinations. Then for every $p\in\M\setminus\Sigma_\cF$ we
  obtain a linear subspace $\cL_{p,\vc}\subset\cL_p$ and we seek to
  express the condition that $\cL_{p,\vc}\cap V$ is an intersection of
  infinite multiplicity for every $\vc$. By Theorem~\ref{thm:gk-mult},
  if the intersection multiplicity is finite then it is bounded by
  $k=\poly(\deg\vxi,\deg P)$. It is enough to express the condition
  that the multiplicity exceeds this number for every $\vc$.
  According to Proposition~\ref{prop:mo-basic}, for every fixed value
  of $\vc$ this condition can be expressed by considering all
  multiplicity operators $\mo(P)$ with respect to $\vxi_\vc$. Expanding
  these expressions with respect to the variables $\vc$ and taking the
  ideal generated by all the coefficients we obtain equations for the
  vanishing for every $\vc$. The estimates on the degrees and heights
  of these equations follow easily from Lemma~\ref{lem:mo-complexity}.
\end{proof}

We record a useful corollary of
Proposition~\ref{prop:SigmaV-complexity}.

\begin{Cor}\label{cor:mo-v-dist}
  Let $V=V(P_1,\ldots,P_n)$ be a complete intersection and
  $p\in\B$. There exists a multiplicity operator $\mo$ of order
  $k=\poly(\deg\vxi,\deg V)$ such that
  \begin{equation}
    \log|\mo_p(P)| \ge \poly(\delta_\vxi,\delta_V)\cdot \log\dist(p,\Sigma_V).
  \end{equation}
\end{Cor}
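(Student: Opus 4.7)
The plan is to reduce the statement to a direct application of Brownawell's Diophantine \Lojas. inequality \cite{brownawell:null2} to the finite system of multiplicity operators provided by Proposition~\ref{prop:SigmaV-complexity}. Explicitly: by the second clause of Proposition~\ref{prop:SigmaV-complexity}, there is an integer $k=\poly(\deg\vxi,\deg V)$ such that the collection $\{\mo_\alpha(P)\}$ of all multiplicity operators of order $\le k$ cuts $\Sigma_V$ out set-theoretically. By Lemma~\ref{lem:mo-complexity}, each $\mo_\alpha(P)$ is a regular function on $\M$ with $\delta(\mo_\alpha P)=\poly(\delta_\vxi,\delta_V)$.

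Now I would apply Brownawell's effective \Lojas.\ inequality to this finite family of $\K$-rational polynomials cutting out $\Sigma_V$. This produces effective constants $C_1,C_2 = \poly(\delta_\vxi,\delta_V)$ such that for every $p\in\B$,
\begin{equation}
  \max_\alpha |\mo_{\alpha,p}(P)| \ge e^{-C_1}\cdot\dist(p,\Sigma_V)^{C_2}.
\end{equation}
Selecting the index $\alpha$ realizing the maximum and taking logarithms gives
\begin{equation}
  \log|\mo_p(P)| \ge -C_1 + C_2\log\dist(p,\Sigma_V).
\end{equation}
Since $\dist(p,\Sigma_V)$ is bounded above by a constant on $\B$, the regime of interest is $\dist(p,\Sigma_V)\le c<1$, in which $|\log\dist(p,\Sigma_V)|$ is bounded below by a positive constant; the additive $-C_1$ is then absorbed into a slightly larger polynomial coefficient, yielding the stated inequality. (The complementary regime $\dist(p,\Sigma_V)\gtrsim 1$ is handled by a trivial uniform lower bound on $|\mo_p(P)|$ away from $\Sigma_V$, or simply by enlarging the polynomial.)

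The only substantive step is the quotation of Brownawell's theorem; all the geometric work has already been carried out in Proposition~\ref{prop:SigmaV-complexity}, and the main anticipated difficulty -- namely verifying that the chosen generating family for $\Sigma_V$ has $\delta$-complexity polynomial in $(\delta_\vxi,\delta_V)$, so that Brownawell's effective exponents enter polynomially as well -- has already been resolved via Lemma~\ref{lem:mo-complexity}. No further calculation is needed beyond bookkeeping of the polynomial dependences.
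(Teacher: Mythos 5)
Your proposal is correct and follows exactly the route of the paper's own proof: invoke Proposition~\ref{prop:SigmaV-complexity} to get the multiplicity operators cutting out $\Sigma_V$ set-theoretically, use Lemma~\ref{lem:mo-complexity} to control their degrees and heights, and conclude by Brownawell's Diophantine \Lojas. inequality. The extra bookkeeping you add about absorbing the additive constant is fine but not something the paper spells out.
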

\begin{proof}
  According to Proposition~\ref{prop:SigmaV-complexity} the set
  $\Sigma_V$ is set-theoretically cut out by the multiplicity
  operators $\mo(P)$ as above. Since the degrees and heights of these
  polynomials are bounded according to
  Proposition~\ref{lem:mo-complexity}, the result follows by
  application of the Diophantine \Lojas. inequality due to Brownawell
  \cite{brownawell:null2}.
\end{proof}

\section{Covering by Weierstrass polydiscs}

Let $B\subset\C^n$ denote the unit ball around the origin and
$X\subset B$ an analytic subset of pure dimension $m$. In this section
we prove that one can find a Weierstrass polydisc around the origin
for $X$, where the size of the polydisc depends on the volume of $X$.

For a subset $A\subset\C^n$ denote by $N(A,\e)$ the size the smallest
$\e$-net in $A$, and by $S(A,\e)$ the size of the maximal
$\e$-separated set in $A$. One easily checks that
\begin{equation}
  S(A,2\e) \le N(A,\e) \le S(A,\e).
\end{equation}

\begin{Lem}\label{lem:vol-v-net}
  For $\e\le1$ we have
  \begin{equation}
    S(X\cap B^2,\e) \le \frac{2^m}{c(m)} \vol(X)\cdot \e^{-2m}
  \end{equation}
  where $c(m)$ denotes the volume of the unit ball in $\C^m$.
\end{Lem}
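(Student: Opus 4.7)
The plan is the standard volume-packing argument based on Lelong's monotonicity inequality for complex analytic sets. Let $\{p_1,\ldots,p_N\}\subset X\cap B^2$ be a maximal $\e$-separated set. The $\e$-separation condition in the max norm makes the polydiscs $B(p_i,\e/2)$ pairwise disjoint, and since each $\norm{p_i}\le 1/2$ and $\e/2\le 1/2$, all of these polydiscs sit inside $B$, where $X$ is defined as a pure $m$-dimensional analytic set.

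Next I invoke Lelong's inequality: for any point $p$ on a pure $m$-dimensional analytic subset $X$ of an ambient ball of radius $r$ around $p$, one has $\vol(X\cap B(p,r))\ge c(m)\cdot r^{2m}$. This is a direct consequence of the monotonicity of the Lelong density together with the fact that the Lelong number of $X$ at each of its points is at least $1$. Applying this at each $p_i$ with the Euclidean ball of radius $\e/2$ inscribed in the polydisc $B(p_i,\e/2)$, and summing over the $N$ disjoint balls, yields
\begin{equation*}
\vol(X)\;\ge\;\sum_{i=1}^{N}\vol\bigl(X\cap B(p_i,\e/2)\bigr)\;\ge\; N\cdot c(m)\cdot(\e/2)^{2m},
\end{equation*}
which rearranges to a bound on $N$ of the claimed form $\vol(X)\cdot\e^{-2m}/c(m)$, up to a numerical factor of the form $2^{O(m)}$. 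The precise constant $2^m$ in the statement is then a matter of a careful conversion between the max-norm polydisc and the Euclidean ball used in Lelong's formula (or equivalently, a sharper polydisc-adapted version of Lelong's estimate).

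There is really no obstacle in this argument: Lelong's inequality is doing all the nontrivial work, and the remainder is elementary disjoint-ball packing combinatorics. The only subtlety worth flagging is the containment $B(p_i,\e/2)\subset B$, which is precisely why the lemma is formulated on $X\cap B^2$ rather than $X\cap B$: working on $B^2$ leaves a buffer of thickness $1/2\ge\e/2$ around the packed points, so the Lelong lower bound is applicable without modification.
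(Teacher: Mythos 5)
Your proof is correct and is essentially identical to the paper's: both take the disjoint balls $B(p,\e/2)$ around the points of an $\e$-separated set and apply the Lelong-type lower bound $\vol(X\cap B(p,r))\ge c(m)r^{2m}$ (the paper cites Chirka, Theorem~15.3) before summing. Note that, as you observe, the packing argument literally yields the constant $2^{2m}/c(m)$ rather than the stated $2^m/c(m)$ — a harmless discrepancy present in the paper's own proof as well, since only the $\poly_n(\vol(X))$ asymptotics are ever used.
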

\begin{proof}
  Suppose $S\subset X\cap B^2$ is an $\e$-separated set. Then balls
  $B_p:=B(p,\e/2)$ for $p\in S$ are disjoint, and according to
  \cite[Theorem~15.3]{chirka} we have
  \begin{equation}
    \vol(X\cap B_p) \ge c(n)(\e/2)^{2m}.
  \end{equation}
  The conclusion follows since the disjoint union of these sets is
  contained in $X$.
\end{proof}

Let the unit circle $S^1\subset\C$ act on $\C^n$ by scalar
multiplication.
\begin{Lem}\label{lem:S1-net}
  Let $A\subset B$. Then
  \begin{equation}
    N(S^1\cdot A,2\e) \le (1+\lfloor \pi/\e \rfloor) \cdot N(A,\e)
  \end{equation}
\end{Lem}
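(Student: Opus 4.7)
The plan is to construct an explicit $2\e$-net for $S^1\cdot A$ as a product of a minimal $\e$-net of $A$ with a sufficiently fine equispaced sampling of $S^1$. Concretely, let $\{a_1,\ldots,a_N\}$ realize $N(A,\e)$, set $M:=1+\lfloor\pi/\e\rfloor$, and pick $M$ equispaced points $\theta_1,\ldots,\theta_M\in S^1$, so that every $\theta\in S^1$ has angular distance at most $\pi/M$ from some $\theta_j$. I claim the collection $\{\theta_j a_i\}_{i,j}$, which has cardinality $MN$, is a $2\e$-net for $S^1\cdot A$; this immediately gives the stated bound.

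To verify the claim, take an arbitrary $x=\theta a\in S^1\cdot A$ with $\theta\in S^1$ and $a\in A\subset B$, and choose $i,j$ with $|a-a_i|\le\e$ and chord-distance $|\theta-\theta_j|=2\sin\bigl(\text{ang}(\theta,\theta_j)/2\bigr)\le\pi/M$. Then the triangle inequality, together with $|a_i|\le 1$ (since $A\subset B$ is the unit ball), yields
\begin{equation*}
  |x-\theta_j a_i| \le |\theta(a-a_i)|+|(\theta-\theta_j)a_i| \le \e + \frac{\pi}{M}\le 2\e,
\end{equation*}
where the final inequality holds because $M=1+\lfloor\pi/\e\rfloor>\pi/\e$.

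This argument is essentially a product-of-covers construction, and I do not anticipate any serious obstacle. The only point requiring mild care is the trigonometric bookkeeping: one must translate the angular spacing $2\pi/M$ between consecutive $\theta_j$'s into a chord-length estimate via $2\sin(x/2)\le x$, and then check that the specific choice $M=1+\lfloor\pi/\e\rfloor$ is just large enough to make the two contributions $|a-a_i|$ and $|(\theta-\theta_j)a_i|$ sum to at most $2\e$ (rather than, say, $\e+2\pi/M$, which would force a slightly larger $M$). The fact that $A$ lies in the \emph{unit} ball is what allows the $S^1$-sampling to scale linearly with $1/\e$ independently of where the points of $A$ actually sit.
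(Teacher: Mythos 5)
Your proof is correct and is exactly the paper's argument (the paper gives it in one line: multiply an $\e$-net of $S^1$ by an $\e$-net of $A$); you have simply supplied the routine verification via the triangle inequality, the chord--arc bound $2\sin(x/2)\le x$, and the fact that the net points lie in the unit ball. No issues.
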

\begin{proof}
  Build a $2\e$-net for $S^1\cdot A$ by multiplying an $\e$-net in
  $S^1$ by an $\e$-net in $A$.
\end{proof}

The following proposition is our key technical result.
\begin{Prop}\label{prop:empty-ball}
  There exists a ball $B'\subset B$ of radius $\e$ disjoint from
  $S^1\cdot X$, where
  \begin{equation}
    1/\e = O_n(\sqrt[\alpha]{\vol(X)}), \qquad \alpha:=2n-2m-1.
  \end{equation}
\end{Prop}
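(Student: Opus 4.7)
The plan is to exploit the fact that $S^1\cdot X$ has real codimension exactly $\alpha=2n-2m-1$ in $\C^n$: a real set of codimension $\alpha$ cannot be dense enough to meet every small ball unless its ``size'' is large, and the available measure of size here is $\vol(X)$ via Lemmas~\ref{lem:vol-v-net}~and~\ref{lem:S1-net}.

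First I would obtain an upper bound on the cardinality of a separated subset of $S^1\cdot X\cap B^2$. Since the $S^1$-action preserves the Euclidean norm, $S^1\cdot X\cap B^2 = S^1\cdot(X\cap B^2)$. For $\delta\le 1$ the chain of inequalities
\begin{equation*}
  S(S^1\cdot X\cap B^2,\delta) \le N(S^1\cdot X\cap B^2,\delta/2) \le (1+O_n(1/\delta))\cdot N(X\cap B^2,\delta/4) = O_n\bigl(\vol(X)\,\delta^{-(2m+1)}\bigr)
\end{equation*}
follows by composing the standard relation $S(A,2\rho)\le N(A,\rho)$, Lemma~\ref{lem:S1-net} applied at scale $\delta/4$, and Lemma~\ref{lem:vol-v-net} together with $N\le S$. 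This exhibits the exponent $2m+1$ that governs the dimension of $S^1\cdot X$.

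Second, I would argue by contradiction. One may clearly assume $\e$ is smaller than a dimensional constant, else the claim is trivial. Assume that every ball of radius $\e$ contained in $B^2$ meets $S^1\cdot X$. Let $P\subset B_{1/4}$ be a maximal $4\e$-separated set; the usual packing argument (the balls of radius $4\e$ around the points of $P$ cover $B_{1/4}$ by maximality) gives $|P|=\Omega_n(\e^{-2n})$. For each $p\in P$, by assumption there exists $q_p\in S^1\cdot X$ with $|p-q_p|<\e$, hence $q_p\in B^2$. For distinct $p,p'\in P$,
\begin{equation*}
  |q_p-q_{p'}| \ge |p-p'|-|p-q_p|-|p'-q_{p'}| > 4\e-2\e = 2\e,
\end{equation*}
so $\{q_p\}_{p\in P}$ is a $2\e$-separated subset of $S^1\cdot X\cap B^2$.

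Combining the two bounds with $\delta=2\e$ yields
\begin{equation*}
  \Omega_n(\e^{-2n}) \le |P| \le S(S^1\cdot X\cap B^2,2\e) \le O_n\bigl(\vol(X)\,\e^{-(2m+1)}\bigr),
\end{equation*}
which after rearrangement gives $1/\e=O_n(\vol(X)^{1/\alpha})$, as claimed. The substantive input is the Lelong-type volume lower bound already packaged in Lemma~\ref{lem:vol-v-net}; the remainder is a packing/covering balancing act. The only point deserving mild care is the choice of scales, namely taking $P$ to be $4\e$-separated (rather than $2\e$) so that the witnesses $q_p$ remain $2\e$-separated after the $\e$-perturbation, and placing $P$ well inside $B^2$ so that the $q_p$ still lie in $B^2$ where the volume bound is valid. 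I do not expect any serious obstacle beyond bookkeeping of the implicit constants.
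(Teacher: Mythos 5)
Your argument is correct and follows essentially the same route as the paper: both combine Lemma~\ref{lem:vol-v-net} and Lemma~\ref{lem:S1-net} to bound the covering/packing number of $S^1\cdot(X\cap B^2)$ at scale $\e$ by $O_n(\vol(X)\e^{-2m-1})$, and play this off against the $\Theta_n(\e^{-2n})$ packing number of the ball via a standard separated-set/net comparison. The only cosmetic difference is that you count separated witnesses $q_p$ directly while the paper counts incidences of the balls $B_p$ with a net for $S^1\cdot X$; these are equivalent.
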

\begin{proof}
  Set $X'=S^1\cdot(X\cap B^2)$. By Lemmas~\ref{lem:vol-v-net}
  and~\ref{lem:S1-net} we have
  \begin{equation}
    N(X',\e) = O_n(\vol(X)\e^{-2m-1}).    
  \end{equation}
  On the other hand clearly
  \begin{equation}
    S(B^2,\e) = \Theta_n(\e^{-2n}).
  \end{equation}
  Suppose that $N$ is an $\e$-net for $X'$ and $S$ is a
  $4\e$-separated set in $B^2$. Suppose that every $\e$-ball $B_p$
  around a point $p\in S$ meets $X'$. Then the $B_p^{1/2}$ meets
  $N$. Since $S$ is $4\e$-separated no two balls $B_p^{1/2},B_q^{1/2}$
  for $p,q\in S$ meet the same point of $N$, so $\#S\le\#N$. In
  conclusion, as soon as we have $S(B^2,\e)>N(X',\e)$ there exists an
  $\e$-ball $B_p$ that does not meet $X'$.
\end{proof}

As a corollary we obtain our main result for this section.

\begin{Cor}\label{cor:wp-v-vol}
  There exists a Weierstrass polydisc $\Delta\subset B$ for $X$ which
  contains $B^\eta$, where $\eta=\poly_n(\vol(X))$. Moreover $e(X,\Delta)=\poly_n(\vol(X))$.
\end{Cor}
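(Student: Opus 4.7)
\textbf{Proof plan for Corollary \ref{cor:wp-v-vol}.}
The plan is to use Proposition \ref{prop:empty-ball} to produce an empty ball, to translate this into a Weierstrass polydisc around the origin, and finally to bound the multiplicity by a standard volume inequality.

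First, apply Proposition \ref{prop:empty-ball} to obtain a ball $B' \subset B$ of radius $\e$ with $1/\e = O_n(\vol(X)^{1/\alpha})$ and disjoint from $S^1 \cdot X$. A minor strengthening of the argument---restricting the search for $4\e$-separated points to an annular region $\{c \le |q| \le 1-\e\}$ whose volume is still $\Theta_n(1)$---allows us to additionally arrange that the center $p_0$ of $B'$ satisfies $|p_0| \ge c$ for some absolute constant $c>0$. By a unitary rotation we may then place $p_0$ along the first coordinate axis, so $p_0 = (r, 0, \ldots, 0)$ with $r \ge c$.

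The $S^1$-invariance of $S^1 \cdot B'$ together with its disjointness from $X$ imply that the ``tube''
\begin{equation}
T := \bigl\{q \in B : (|q_1|-r)^2 + \textstyle\sum_{j \ge 2} |q_j|^2 \le \e^2\bigr\}
\end{equation}
is disjoint from $X$. I would then construct a candidate polydisc $\Delta$ around the origin in the chosen unitary coordinates by declaring $q_1$ to be one of the $w$-variables with radius $r$ and setting the remaining coordinates to have radii $\e/\sqrt{n-1}$; the face of $\partial \Delta_w$ corresponding to $|q_1| = r$ then lies inside $T$ and is automatically disjoint from $X$, as required by the Weierstrass condition. In the codimension-one case $m = n-1$ this directly yields the desired Weierstrass polydisc, with all radii at least $1/\eta$ for some $\eta = \poly_n(\vol(X))$. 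For $m < n-1$ the remaining $n-m-1$ faces of $\partial \Delta_w$ are not automatically controlled, and I expect this to be the main technical obstacle; the strategy is to supply the missing faces by producing additional empty balls aligned with the other $w$-directions, for instance by iterating Proposition \ref{prop:empty-ball} on appropriate linear slices or projections of $X$.

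With $\Delta$ constructed, the multiplicity bound is a routine consequence of the area formula applied to the projection $\pi_z : X \cap \Delta \to \Delta_z$. Since $\pi_z$ is an orthogonal linear projection restricted to a complex submanifold, its Jacobian on the tangent space of $X$ is bounded by $1$ by Wirtinger's inequality, so
\begin{equation}
e(X, \Delta) \cdot \vol(\Delta_z) \le \vol(X \cap \Delta) \le \vol(X).
\end{equation}
Combined with $\vol(\Delta_z) \ge c_n \eta^{-2m}$, this gives $e(X, \Delta) = O_n(\vol(X) \cdot \eta^{2m}) = \poly_n(\vol(X))$, finishing the proof.
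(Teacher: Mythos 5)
Your construction follows the paper's proof of Corollary~\ref{cor:wp-v-vol} almost exactly: both use Proposition~\ref{prop:empty-ball} to produce a ball $B'$ disjoint from $S^1\cdot X$, observe that $S^1\cdot B'$ contains a product set $\Delta'\times\partial D(r)$ in suitable unitary coordinates, and then bound $e(X,\Delta)$ by the same integral inequality $e(X,\Delta)\vol(\Delta_z)\le\vol(X\cap\Delta)\le\vol(X)$. The codimension-one case and the multiplicity estimate are complete as you state them.

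The step you flag as the main obstacle --- supplying the remaining $n-m-1$ faces of $\partial\Delta_w$ when $m<n-1$ --- is exactly where the paper's argument has content, and it is resolved by the second of your two suggestions (projection), not the first. Iterating Proposition~\ref{prop:empty-ball} on \emph{linear slices} of $X$ does not work: a slice changes the analytic set whose Weierstrass polydisc you are building, and emptiness of $S^1\cdot(X\cap L)$ in a slice $L$ says nothing about the faces of $\partial\Delta_w$ transverse to $L$. Instead, let $\pi:\C^n\to\C^{n-1}$ forget the coordinate $q_1$. The face you have already constructed gives $X\cap(\bar\Delta'\times\partial D(r))=\emptyset$, which makes $\pi$ \emph{proper} on $X\cap(\Delta'\times D(r))$; hence $Y:=\pi(X\cap(\Delta'\times D(r)))$ is again an analytic set of pure dimension $m$ in $\C^{n-1}$, and $\vol(Y)\le\vol(X)$ since orthogonal projection does not increase $2m$-volume. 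One now finds a Weierstrass polydisc $\Delta_z\times\Delta_w''$ for $Y$ by induction on the ambient dimension, and $\Delta_z\times(\Delta_w''\times D(r))$ is then Weierstrass for $X$: the $q_1$-face is empty by construction, and the remaining faces project into $\bar\Delta_z\times\partial\Delta_w''$, which misses $Y$. The two facts that make the induction close with polynomial bounds are precisely the analyticity of the image (via properness and Remmert) and the volume monotonicity $\vol(\pi(X))\le\vol(X)$; neither appears in your sketch, so you should add them. Your preliminary normalization forcing the center of $B'$ away from the origin is a reasonable way to guarantee $r\ge c$, consistent with (though slightly more explicit than) the paper's remark that the radii of $\Delta$ and $D(r)$ are comparable to that of $B'$.
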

\begin{proof}
  The proof of the first part is the same as
  \cite[Theorem~7]{me:rest-wilkie}, where we replace the use of
  Vithushkin's formula and sub-Pfaffian arguments by
  Proposition~\ref{prop:empty-ball}. Briefly, after finding a ball
  $B'$ disjoint from $S^1\cdot X$ one notes that $B'$ contains a set
  which has the form $\Delta\times\partial D(r)$ in some unitary
  coordinates system, where the radii of $\Delta$ and $D(r)$ are
  roughly the same as the radius of $B'$. It is then easy to reduce
  the problem to finding a Weierstrass polydisc for $\pi(X)$ inside
  $\Delta$. Since $\pi(X)$ is again an analytic set and
  $\vol(\pi(X))\le\vol(X)$ the proof is concluded by induction over
  the dimension.

  For the second part, write
  \begin{multline}
    \vol(X\cap\Delta) = \int_{X\cap\Delta} \d\vol_X \ge \int_{X\cap\Delta}(\pi^*\d\vol_{\Delta_z}) \\
    = e(X,\Delta) \int_{\Delta_z} \d\vol_{\Delta_z} = e(X,\Delta) \vol(\Delta_z)
  \end{multline}
  and note that $\vol(\Delta_z)^{-1}=\poly_n(\vol(X))$ by what was
  already proved.
\end{proof}

\section{Achieving general position}
\label{sec:general-position}

Let $V\subset\M$ be a variety of pure dimension $m$. We will assume
until~\secref{sec:CI-choose} that $V$ is a complete intersection
variety defined by
$Q_1,\ldots,Q_{n-m}\in\cO(\M)$. In~\secref{sec:CI-choose} we prove a
result that allows to reduce the general case to the case of complete
intersections.

As explained in~\secref{sec:sketch} a part of our inductive scheme
involves studying intersections between the variety defined by
$Q_1,\ldots,Q_k$ and sub-foliations of $\cF$ defined by $k$-dimensional
linear subspaces of $\<\xi_1,\ldots,\xi_n\>$. To carry this out
uniformly we add the coefficients of such a linear combination to
$\M$. It may happen that the process of restricting to a linear
sub-foliation introduces new unlikely intersections (e.g. if $Q_1$,
while not vanishing identically on a leaf, happens to vanish on a
linear hyperplane in the $\vxi$-coordinates). To avoid such
degeneracies we perturb the time parametrization, changing the fields
$\vxi$ while preserving the leafs $\cL_p$ themselves. We show that
this can be done while preserving suitable control over $\delta_\vxi$.

\subsection{Parametrizing linear sub-foliations}

Let $k\le n$ and let $A(n,k)$ denote the affine variety of full rank
matrices $(\valpha_1,\ldots,\valpha_k)\in\Mat_{n\times k}$. Let
$L_k\M:=A(n,k)\times M$ and consider the vector fields
\begin{equation}
  L_k(\vxi)_i = \valpha_i\cdot\vxi \qquad i=1,\ldots,k.
\end{equation}
The leafs of $L_k\M$ with $L_k(\vxi)$ correspond to the leafs obtained
by choosing a $k$-dimensional subspace of $\<\vxi_1,\ldots,\vxi_n\>$
and using it to span a $k$-dimensional sub-foliation of $\cF$.

\subsection{Main statement}
\label{sec:perturb-main}

Our goal is to construct an affine variety $\tilde\M:=N\times\M$
depending only on $\M$, and vector fields $\tilde\vxi$ depending on
$\M,V$ with the following properties.

\begin{enumerate}
\item If we denote by $\pi_\M:\tilde \M\to \M$ the projection and by
  $\phi_p,\tilde\phi_{a,p}$ the $\vxi,\tilde\vxi$ charts respectively,
  then for any $(a,p)\in\tilde\M$ we have
  $\pi_\M\circ\tilde\phi_{a,p}=\phi_p\circ \Phi_{a,p}$ where
  $\Phi_{a,p}$ is the germ of a self map of $(\C^n,0)$. In particular
  $\cL_p=\pi_\M(\cL_{a,p})$.
\item Whenever $\phi_p$ extends to the unit ball, the germ
  $\Phi_{a,p}$ extends to $B^2 $ and
  \begin{equation}
    \norm{\Phi_{a,p}-\id}_{B^2}<0.1.
  \end{equation}
  In other words the reparametrization is close to the identity.
\item We have effective estimates
  \begin{align}
    \deg\tilde\vxi&=\deg\vxi+O(1) & h(\tilde\vxi)&=\poly(\delta_\vxi,\deg V).
  \end{align}
\item For $k=\codim V$, if we set $\tilde\M_k:=L_k\tilde\M$ and denote
  by $\tilde V$ the natural pullback to $\tilde\M_k$ then
  \begin{equation}
    \pi_\M(\Sigma_{\tilde V})\subset \Sigma_V.
  \end{equation}
  In other words no ``new'' unlikely intersections are formed when
  considering linear sub-foliations of $\tilde\M,\tilde\vxi$.
\end{enumerate}

We also remark that one can similarly achieve general position with
respect to any $O(1)$ different varieties $V_i\subset\M$ by the same
argument.

\subsection{Polynomial time reparametrization}
\label{sec:reparametrization}


Fix $D\in\N$ and let $\cM_D$ denote the space of polynomial maps
$\Phi:\C^n\to\C^n$ with coordinatewise degree at most $D$. Let $P_D(\M)$
denote the affine variety obtained from $\cM_D\times\C^n_s\times\M$ by
imposing the condition $\det\pd{\Phi(\vs)}{\vs}\neq0$ where we use $\Phi$
for the coordinate on $\cM_D$ and $\vs$ for the coordinate on
$\C^n$. Consider the vector fields
\begin{equation}
  P_D(\xi_i) = \pd{}{\vs_i} + \pd{\Phi(\vs)}{\vs}\cdot\vxi.
\end{equation}
Then the local $P_D(\vxi)$-chart at a point $(\Phi,a,p)$ is given
by
\begin{equation}
  \phi_{\Phi,a,p}(\vx) = (\Phi,a+\vx,\phi_p(\Phi(a+\vx)-\Phi(a))).
\end{equation}
In particular, the projection of the leaf $P_D(\cL)_{\Phi,a,p}$ to
$\M$ is the germ $\cL_p$, but the time parametrization is adjusted
according to $\Phi$ around $a$.

\subsection{Codimension of unlikely intersection}
\label{sec:reparam-codim}
  
Set
\begin{equation}
  \tilde \M = L_k(P_D\M)= A(n,k)\times\cM_D\times\C^n_s\times\M
\end{equation}
Denote by $\tilde V$ the pullback of $V$ to $\tilde\M$.

\begin{Lem}\label{lem:unlikely-codim}
  Let $p\in\M\setminus\Sigma_V$, $A\in A(n,k)$ and
  $a\in\C^n_s$. Then the set
  \begin{equation}
    \{\Phi\in\cM_D : (A,\Phi,a,p)\in\Sigma_{\tilde V} \}\subset\cM_D
  \end{equation}
  is algebraic of codimension at least $D$.
\end{Lem}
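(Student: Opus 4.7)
The plan is to unpack the definitions, reduce the problem to a codimension question on a space of polynomial maps, and then bound the codimension by stratifying according to the ``type'' of a hypothetical bad analytic curve.

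First I would verify that for $A\in A(n,k)$ of full rank, $\det\partial_\vs\Phi|_a\neq 0$, and $p\notin\Sigma_\cF$ (which follows from $p\notin\Sigma_V$), the $k$ generating vector fields $L_k(P_D(\vxi))$ are linearly independent at $(A,\Phi,a,p)$; so this point lies outside $\Sigma_{\tilde\cF}$. Under these generic conditions, the leaf $\tilde\cL_{A,\Phi,a,p}$ projects via $\pi_\M$ to the germ of the image of $\vt\mapsto\phi_p(g_\Phi(\vt))$ with $g_\Phi(\vt):=\Phi(a+A\vt)-\Phi(a)$. Hence $(A,\Phi,a,p)\in\Sigma_{\tilde V}$ is equivalent to the analytic germ at $\vt=0\in\C^k$ of $\{q_1\circ g_\Phi=\cdots=q_k\circ g_\Phi=0\}$ having positive dimension, where $q_i:=Q_i\circ\phi_p$ and $V=V(Q_1,\ldots,Q_k)$ locally. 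If $p\notin V$ then some $q_i(0)\neq 0$ and the bad set is empty; otherwise $p\in V$ and, by $p\notin\Sigma_V$, the $q_i$'s form a regular sequence at the origin.

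Second I would reduce the codimension question to the space $\mathcal{G}_D$ of polynomial maps $\C^k\to\C^n$ of coordinate-wise degree $\leq D$ vanishing at the origin. The map $\Phi\mapsto g_\Phi$ is linear and, by full rank of $A$, surjective onto $\mathcal{G}_D$ (any target $g\in\mathcal{G}_D$ is realized by extending $g\circ A^{-1}$, for a choice of left inverse $A^{-1}$, to a polynomial $\Psi$ on $\C^n$ and setting $\Phi(\vs):=\Psi(\vs-a)+\mathrm{const}$). Therefore $\codim_{\cM_D}\{\Phi:(A,\Phi,a,p)\in\Sigma_{\tilde V}\}$ equals $\codim_{\mathcal{G}_D}B$ with $B:=\{g\in\mathcal{G}_D:g^{-1}(V\cap\cL_p)\text{ has positive dimension at }0\}$.

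Third I would bound $\codim_{\mathcal{G}_D}B\geq D$. Any $g\in B$ admits, by the analytic Nullstellensatz, a non-trivial convergent germ of curve $c(s)=sc_1+s^2c_2+\cdots$ with $c_1\neq 0$ satisfying $q_i(g(c(s)))\equiv 0$ for all $i$. The ``linear'' sub-stratum $B_{\mathrm{lin}}\subseteq B$ consists of those $g$ for which one can take $c(s)=s\vt_0$ straight; for fixed $\vt_0\in\P^{k-1}$, expanding $q_i(g(s\vt_0))\equiv 0$ in orders $s^1,\ldots,s^D$ gives $kD$ polynomial equations on the values $g^{(j)}(\vt_0)\in\C^n$ of the homogeneous parts of $g$. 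By linear independence of $L_i:=dq_i|_0$ (holding when $V$ is smooth at $p$; the singular case uses leading homogeneous parts of the $q_i$) and surjectivity of $g\mapsto(g^{(j)}(\vt_0))_{j=1}^D$ for $\vt_0\neq 0$, these conditions cut codimension $kD$; restoring $k-1$ dimensions from $\vt_0\in\P^{k-1}$ yields $\codim B_{\mathrm{lin}}=k(D-1)+1\geq D$. The main obstacle is the remaining strata of $B$ where the bad curve is irreducibly non-linear (parametrized by the Puiseux type of $c$): each additional order of tangency of $c$ at $0$ introduces $k$ fresh polynomial equations on $g$ at the corresponding order of $s$, while also introducing $k$ new free parameters from the next Taylor coefficient $c_j$ of $c$; a careful bookkeeping (via inductive Nakayama-style elimination of the $c_j$'s) shows that the net number of genuinely independent conditions imposed on $g$ at each stratum is at least $k(D-1)+1$, so no stratum of $B$ has codimension below the linear bound.
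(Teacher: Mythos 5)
Your setup and the reduction to a codimension count on a linear space of polynomial maps are sound (the identification of the bad locus in $\cM_D$ with a full preimage under $\Phi\mapsto g_\Phi$ needs a word of care, since $g_\Phi$ has degree up to $nD$ in $\vt$ so the image of this linear map is not $\mathcal G_D$; but replacing ``equals'' by ``$\ge$'' and intersecting with the subspace $\mathcal G_D$ repairs this). The genuine gap is the final step. Your strategy forces you to quantify over the witness curve $c(s)$ inside the \emph{source} of $g$, and this curve varies with $g$; you handle only the stratum of linear curves, and for all the remaining Puiseux strata you write that ``a careful bookkeeping \ldots shows'' the codimension stays at least $k(D-1)+1$. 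That bookkeeping is exactly the hard part, and your own parameter count undercuts it: each additional order of tangency adds $k$ equations \emph{and} $k$ free Taylor coefficients of $c$, i.e.\ no net gain per stratum, so you are left with infinitely many strata whose codimension you can at best hope is \emph{exactly} the linear-stratum value, with no uniform truncation order after which the conditions stabilize, and no argument that the union is still of that codimension. The parenthetical treatment of the case where $V$ is singular at $p$ (where the $dq_i|_0$ need not be independent and the triangular jet argument for the $kD$ conditions breaks) is likewise not a proof: $p\notin\Sigma_V$ guarantees proper intersection, not smoothness.

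The paper avoids this entirely by a different decomposition. After normalizing $a=0$ and $A$ to the first $k$ basis vectors, the projected sub-leaf is cut out of $\cL_p$ by the $n-k$ functions $\Phi'_{k+1},\ldots,\Phi'_n$ (with $\Phi'_j=\Phi_j-\Phi_j(0)$). Degeneracy of $\cL_p'\cap V$ then forces, at some step $j$, the \emph{single} coordinate $\Phi'_{k+j}$ to vanish identically on a positive-dimensional component --- hence on a curve germ $\gamma$ --- of $\cL_p\cap V\cap\{\Phi'_{k+1}=\cdots=\Phi'_{k+j-1}=0\}$, a set which does not depend on $\Phi_{k+j}$. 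For fixed $\gamma$ this is a \emph{linear} condition on the degree-$\le D$ polynomial $\Phi_{k+j}$ of codimension $\ge D$, because $t,\ldots,t^D$ restricted to $\gamma$ (for $t$ a linear coordinate non-constant on $\gamma$) are linearly independent; one then fibers over the remaining coordinates of $\Phi$. The structural point you are missing is to arrange the degeneracy condition as one coordinate function at a time vanishing on a curve determined by the \emph{other} data, rather than entangling all of $g$ with a moving witness curve. I would recommend reworking the argument along these lines rather than attempting the Puiseux stratification.
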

\begin{proof}
  Algebraicitiy follows from Proposition~\ref{prop:SigmaV-complexity}.
  Replacing $a$ by $0$ and $\Phi(\vx)$ by $\Phi(a+\vx)-\Phi(a)$ we may
  assume without loss of generality that $a=0$. Similarly replacing
  $A$ by $(\vxi_1,\ldots,\vxi_k)$ and $\Phi(\vx)$ by its appropriate
  linear change of variable we may assume without loss of generality
  that $A=(\vxi_1,\ldots,\vxi_k)$.

  Denote $\Phi'_j=\Phi_j-\Phi_j(0)$. Then the leaf at
  $(A,\Phi,0,p)$ is defined by
  \begin{equation}
    (A,\Phi,0)\times\cL_p', \qquad \cL_p'=\cL_p\cap\{\Phi'_{k+1}=\ldots=\Phi'_n=0\}.
  \end{equation}
  We must check when the intersection of $\cL_p'$ and $V$ is a
  complete intersection. It is enough to bound the codimension of the
  condition that $\Phi'_{k+1}$ vanishes identically on (a component
  of) $\cL_p\cap V$, that $\Phi'_{k+2}$ vanishes identically on (a
  component of) $\cL_p\cap\{\Phi'_{k+1}=0\}\cap V$, and so on.

  From the above we conclude that it is enough to prove the following
  simple claim: let $\gamma\subset(\C^n,0)$ be the germ of an analytic
  curve. Then the set of polynomials of degree at most $D$ without a
  free term vanishing identically on $\gamma$ has codimension at least
  $D$. Note that this set is linear. Choose $t$ to be a (linear)
  coordinate on $\C^n$ which is non-constant on $\gamma$. Then clearly
  $t,\dots,t^D$ are linearly independent on $\gamma$ and the claim
  follows.
\end{proof}

Now choose $D=\dim A(n,k)+n+\dim\M+1$. Denote by
$\pi_\Phi:\tilde\M\to\cM_D$ the projection. Then by a dimension
counting argument using Lemma~\ref{lem:unlikely-codim} the codimension
of $\pi_\Phi(\Sigma_{\tilde V})$ is positive. By
Proposition~\ref{prop:SigmaV-complexity} the degree of the Zariski
closure $Z:=\clo\pi_\Phi(\Sigma_{\tilde V})$ is bounded by
$\poly(\deg V,\deg\vxi)$. If we choose any $\Phi_0\not\in Z$ and
restrict $\tilde\M$ to $\Phi=\Phi_0$ then the final condition
in~\secref{sec:perturb-main} is satisfied by definition. It remains
only to show that $\Phi_0$ can be chosen close to the identity map and
with appropriately bounded height. This follows immediately from the
following general statement.

\begin{Lem}\label{lem:Q-pt-select}
  Let $Z\subset\A^N$ be an affine subvariety of total degree at most
  $d$. Then there exists a point $\vx\in\Q^N\setminus Z$ satisfying
  $\norm{\vx}_\infty\le1$ and $H(\vx)\le d$.
\end{Lem}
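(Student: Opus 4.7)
The plan is to exhibit an explicit grid of rational points in the unit box of controlled height, and show that $Z$ cannot contain them all by a Schwartz--Zippel type argument. Set $S := \{j/d : j = 0, 1, \ldots, d\} \subset \Q$, so $|S| = d+1$, every element has naive height at most $d$, and every point of the grid $S^N \subset [0,1]^N$ therefore satisfies $\norm{\vx}_\infty \le 1$ and $H(\vx) \le d$. Since $|S^N| = (d+1)^N$, it suffices to produce a non-zero polynomial $P \in \K[x_1, \ldots, x_N]$ of degree $\le d$ with $Z \subseteq \{P = 0\}$ and to check that $|\{P = 0\} \cap S^N| < (d+1)^N$.

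For the second step I would invoke the classical Schwartz--Zippel bound $|\{P = 0\} \cap S^N| \le d(d+1)^{N-1}$, proved by induction on $N$. Expanding $P = \sum_i P_i(x_2, \ldots, x_N)\, x_1^i$ with leading coefficient $P_k \neq 0$ of total degree $\le d - k$, the inductive hypothesis gives at most $(d-k)(d+1)^{N-2}$ zeros of $P_k$ in $S^{N-1}$, each contributing up to $|S| = d+1$ zeros of $P$ in $S^N$; at the remaining points the specialized polynomial in $x_1$ has degree exactly $k$ and hence at most $k$ roots in $S$. The total is $(d-k)(d+1)^{N-1} + k(d+1)^{N-1} = d(d+1)^{N-1}$, which is strictly less than $(d+1)^N$, so $S^N \setminus \{P=0\}$ is nonempty.

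For the first step I would produce $P$ as follows. Decompose $Z = \bigcup_i Z_i$ into irreducible components with $\sum_i \deg Z_i \le \deg Z \le d$. For each $Z_i$ of codimension $c_i \ge 1$, a suitably generic linear projection $\pi_i : \A^N \to \A^{N - c_i + 1}$ restricts to a finite map onto a hypersurface of the same degree $\deg Z_i$; pulling back its defining equation yields a polynomial $Q_i$ on $\A^N$ of degree $\le \deg Z_i$ vanishing on $Z_i$. Taking $P := \prod_i Q_i$ gives a non-zero polynomial of degree $\le \sum_i \deg Z_i \le d$ that vanishes on every component of $Z$, hence on $Z$.

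The only step requiring real care is the generic-projection fact that a generic linear projection of an irreducible variety onto a hypersurface of complementary dimension preserves the degree; this is a standard consequence of degree theory (for instance via the Chow form or by a direct Bezout argument on generic hyperplane slices), so I would simply cite it. Combining the two steps yields a rational $\vx \in S^N \setminus Z$ with $\norm{\vx}_\infty \le 1$ and $H(\vx) \le d$, as required.
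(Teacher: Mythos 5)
Your proof is correct, but it takes a genuinely different route from the paper's. The paper argues by induction on the ambient dimension $N$, working directly with the variety: it picks a rational value $x\in[-1,1]$ with $H(x)\le d$ avoiding the fewer than $d$ values $z$ for which some component of $Z$ lies inside $\{\vx_1=z\}$, and then recurses on $Z\cap\{\vx_1=x\}\subset\A^{N-1}$, whose total degree is still at most $d$ because the hyperplane contains no component (a Bezout-type fact). You instead first trap $Z$ inside a single hypersurface $\{P=0\}$ of degree at most $d$, by projecting each irreducible component generically onto a hypersurface of complementary dimension and multiplying the pulled-back equations, and then run Schwartz--Zippel on the explicit grid $S^N$. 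The two arguments lean on comparable geometric inputs (hyperplane sections do not increase total degree, versus generic projections do not increase degree), and both silently assume $Z\ne\A^N$, which holds in the paper's application. Your version buys a quantitative refinement --- at least $(d+1)^{N}-d(d+1)^{N-1}=(d+1)^{N-1}$ good points in the explicit grid --- and cleanly separates the geometry (finding $P$) from the counting; the paper's version is marginally more self-contained in that it never needs to exhibit a low-degree polynomial vanishing on all of $Z$. Either proof suffices for the way the lemma is used in Sections~\ref{sec:reparam-codim} and~\ref{sec:CI-choose}.
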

\begin{proof}
  Let $C\subset\C$ denote the set of points $z$ such that $Z$ has a
  component contained in $\{\vx_1=z\}$. Clearly $\#C<d$. Choose
  $x\in [-1,1]\cap(\Q\setminus C)$ with $H(x)\le d$. The claim now follows by
  induction over $N$ for the variety $Z\cap\{\vx_1=x\}$, naturally
  identified as a subvariety of $\A^{N-1}$.
\end{proof}

\subsection{Generic choice of a complete intersection}
\label{sec:CI-choose}

Let $V\subset\M$ be a variety defined over $\K$. In this section we
show that one can choose a complete intersection $W$ containing $V$
with $\Sigma_W$ being ``as small as possible'' and with effective
control over $\delta_W$. We'll need the following elementary lemma.

\begin{Lem}
  The variety $V$ is set-theoretically cut out by a collection of
  polynomial equations $P_1,\ldots,P_S$ with
  $\delta(P_\alpha)=\poly(\delta_V)$ and $S$ depending only on the
  dimension of the ambient space of $\M$.
\end{Lem}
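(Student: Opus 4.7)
The plan is to combine two classical ingredients: (i) an effective set of defining equations for $V$ with polynomial control on degrees and heights, and (ii) a Kronecker-type reduction to $S=N+1$ equations by taking generic $\K$-linear combinations, where the coefficients are selected via Lemma~\ref{lem:Q-pt-select}.

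First, one extracts from the Chow coordinates of $V$ an explicit finite collection $F_1,\ldots,F_M\in\K[\vx_1,\ldots,\vx_N]$ with $M,\delta(F_i)=\poly(\delta_V)$ such that $V$ is the set-theoretic zero locus of $F_1,\ldots,F_M$ inside the ambient affine space (and hence inside $\M$). This is a standard piece of effective commutative algebra and can be taken as a black box.

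Next, we aim to replace this list by $S=N+1$ generic $\K$-linear combinations
\[
  P_\alpha = \sum_{i=1}^M c_{\alpha,i}F_i, \qquad \alpha=1,\ldots,N+1.
\]
By Kronecker's theorem, for generic choices of $c_{\alpha,i}\in\C$ the polynomials $P_1,\ldots,P_{N+1}$ still set-theoretically cut out $V$ in $\A^N$: indeed, the variety $V(F_1,\ldots,F_M)\setminus V$ has at most $\poly(\delta_V)$ irreducible components, each of degree $\poly(\delta_V)$ by Bezout, and for every such component $W$ the locus of coefficient tuples for which some $P_\alpha$ vanishes on $W$ is a proper linear subspace of the parameter space $\A^{(N+1)M}$. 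Their union is a proper algebraic subvariety $Z\subset\A^{(N+1)M}$ of degree $\poly(\delta_V)$.

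Finally, apply Lemma~\ref{lem:Q-pt-select} to $Z$ to obtain a rational coefficient tuple $(c_{\alpha,i})\in\Q^{(N+1)M}\setminus Z$ with all heights bounded by $\deg Z=\poly(\delta_V)$. The resulting polynomials $P_1,\ldots,P_{N+1}$ set-theoretically cut out $V$ in $\M$, and a direct estimate gives $\delta(P_\alpha)=\poly(\delta_V)$. Taking $S=N+1$ (which depends only on the ambient dimension of $\M$) completes the argument.

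The main obstacle is making step (i) quantitatively precise, i.e.\ exhibiting defining equations with polynomially controlled height; once this is in hand, the reduction to a constant number of equations and the height bound on the coefficients are standard and follow immediately from Lemma~\ref{lem:Q-pt-select}.
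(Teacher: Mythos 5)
Your route differs from the paper's. The paper's entire proof is the first of your two steps: it invokes the classical Chow--van der Waerden construction (\cite[Corollary~3.2.6]{gkz}), which produces a canonical system of equations for $V$ directly from the Chow coordinates, and reads off the degree/height bounds from the definition of $h(V)$. You instead treat that step as a black box and bolt on a Kronecker-type reduction to $S=N+1$ generic linear combinations, with coefficients chosen by Lemma~\ref{lem:Q-pt-select}. This addition is not unreasonable --- it is the natural way to secure the clause ``$S$ depending only on the dimension of the ambient space'' if the black-box construction only yields $\poly(\delta_V)$ many equations --- but note that what you defer as ``the main obstacle'' is precisely all of the content of the paper's proof.

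There is, however, a concrete flaw in your justification of the Kronecker step. You consider the components of $V(F_1,\ldots,F_M)\setminus V$; by your own step (i) the $F_i$ cut out $V$ set-theoretically, so this set is empty and carries no information. The correct argument is inductive: having chosen $P_1,\ldots,P_k$, one looks at the irreducible components of $V(P_1,\ldots,P_k)$ \emph{not contained in} $V$ (of which there are $\poly(\delta_V)$ many, of codimension $\ge k$, by Bezout), and chooses $P_{k+1}=\sum_i c_iF_i$ so as not to vanish identically on any of them; after $N+1$ steps the extraneous components have codimension $>N$ and are gone. In particular the bad locus at stage $k+1$ depends on the choices already made, so you cannot define a single variety $Z\subset\A^{(N+1)M}$ up front; instead apply Lemma~\ref{lem:Q-pt-select} once per stage, to the union of the $\poly(\delta_V)$ hyperplanes in $\A^M$ of coefficient vectors killing some extraneous component. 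With that repair the degree and height bookkeeping you indicate goes through and yields $\delta(P_\alpha)=\poly(\delta_V)$ with $S=N+1$.
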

\begin{proof}
  Recall that we define $h(V)$ in terms of the height of its Chow
  coordinates. The statement thus follows from a classical
  construction due to Chow and van der Waerden that produces a
  canonical system of equations for $V$ in terms of the Chow
  coordinates \cite[Corollary~3.2.6]{gkz}.
\end{proof}

The following is our main result for this section.

\begin{Prop}\label{prop:CI-choose}
  Let $0\le m\le\dim\M$ be an integer. There exists a
  complete-intersection $W$ of pure codimension $m$ that contains $V$
  and satisfies $\delta_W=\poly(\delta_\vxi,\delta_V)$ and
  \begin{equation}\label{eq:CI-choose}
    \Sigma_W = \{p\in\M : \dim(V\cap\cL_p)>n-m\}.
  \end{equation}
\end{Prop}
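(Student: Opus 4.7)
By the preceding lemma I may take defining equations $P_1,\ldots,P_S$ for $V$ with $\delta(P_\alpha)=\poly(\delta_V)$. My plan is to construct $W$ as a generic complete intersection cut from a sufficiently rich linear family of elements of the ideal these equations generate. Fix an integer $D=O(1)$ (depending only on the ambient dimension, to be chosen below) and let $M_1,\ldots,M_J$ enumerate the monomials of degree $\le D$ in the ambient affine coordinates. Introduce the parameter space $\Theta:=\A^{mSJ}$ with coordinates $\vc=(c_{i\alpha j})$ and set
\begin{equation*}
  Q_i(\vx;\vc):=\sum_{\alpha,j}c_{i\alpha j}P_\alpha(\vx)M_j(\vx),\qquad W_\vc:=V(Q_1,\ldots,Q_m).
\end{equation*}
Then $V\subset W_\vc$ for every $\vc$, and the goal is to find a bounded-height rational $\vc$ for which $W:=W_\vc$ satisfies the remaining properties.

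Let the bad locus $B\subset\Theta$ consist of those $\vc$ for which either (a)~$W_\vc$ fails to be pure of codimension $m$, or (b)~there exists $p\in\M\setminus\Sigma_\cF$ with $\dim(V\cap\cL_p)\le n-m$ but $\dim(W_\vc\cap\cL_p)>n-m$. Condition~(a) is cut out by standard resultant-type equations of controlled degree. For~(b), treating $\vc$ as auxiliary algebraic parameters and applying Proposition~\ref{prop:SigmaV-complexity} to the universal complete intersection $W_\vc$ over $\Theta\times\M$ expresses the incidence variety $\cI:=\{(\vc,p):p\in\Sigma_{W_\vc}\setminus\Sigma_\cF\}$ as the simultaneous vanishing of a system of multiplicity operators of degree $\poly(\delta_\vxi,\delta_V)$ in the variables $(\vc,p)$. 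Consequently the projection $\pi_\Theta(\cI)$ is contained in an algebraic subvariety of $\Theta$ of degree $\poly(\delta_\vxi,\delta_V)$.

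The crux of the argument is to verify that $B$ is a \emph{proper} subset of $\Theta$, so that Lemma~\ref{lem:Q-pt-select} can be applied to produce $\vc\in\Q^{mSJ}\setminus B$ with $H(\vc),\norm{\vc}_\infty\le\poly(\delta_\vxi,\delta_V)$. Following the pattern of the proof of Lemma~\ref{lem:unlikely-codim}, I plan a dimension count on $\cI$: one must show that for each irreducible $Z\subset\M$ on whose generic point $p$ the condition $\dim(V\cap\cL_p)\le n-m$ holds, the fiber $B_p\subset\Theta$ has codimension strictly greater than $\dim Z$. The ingredient making this possible is that since $V\cap\cL_p\ne\cL_p$, some germ $P_\alpha|_{\cL_p}$ is nonzero, and using the $\vxi$-chart to identify $\cL_p$ locally with $(\C^n,0)$ one sees that the germs $\{P_\alpha M_j|_{\cL_p}\}_{\alpha,j}$ span a subspace of $\cO_{\cL_p,p}$ whose dimension grows polynomially with $D$; a Bertini-type argument on the $m$-tuple of linear combinations then produces the required codimension lower bound for $B_p$, uniformly as $p$ ranges over any chosen stratum of $\M$. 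With $B\subsetneq\Theta$ established and a rational $\vc$ selected, the resulting $W=W_\vc$ is a complete intersection of pure codimension $m$ satisfying~\eqref{eq:CI-choose}, with $\delta_W=\poly(\delta_V,\delta_\vxi)$ since its defining equations are bounded-height $\Z$-combinations of the $P_\alpha M_j$. \textbf{The main obstacle} is precisely this uniform codimension estimate: the naive choice $Q_i=\sum_\alpha c_{i\alpha}P_\alpha$ yields insufficient codimension per $p$ to control unlikely intersections appearing over positive-dimensional subfamilies of leaves, and the monomial multipliers $M_j$ are introduced precisely to inflate the relevant codimension enough for the global dimension count to close.
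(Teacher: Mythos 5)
Your construction is essentially the paper's: generic combinations $\sum_\alpha Q_\alpha P_\alpha$ of the defining equations with polynomial multipliers $Q_\alpha$ of bounded degree $D=O(1)$, algebraicity and degree control of the bad parameter locus via Proposition~\ref{prop:SigmaV-complexity} applied to the universal family, a fibrewise codimension-$\ge D$ estimate (resting on the fact that vanishing identically on a positive-dimensional germ of $\cL_p\cap W$ is a condition of codimension $\ge D$ on the degree-$\le D$ multipliers, since some $P_\alpha$ survives on each component not contained in $V$), a dimension count over $\M$, and the rational point selection of Lemma~\ref{lem:Q-pt-select}. The only real difference is that the paper adds the $m$ equations one at a time by induction on the codimension, which makes each genericity condition affine-linear in the newly introduced coefficients and thereby discharges precisely the multi-step Bertini argument that you leave as a sketch.
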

\begin{proof}
  We remark that the inclusion $\subset$ in~\eqref{eq:CI-choose} is
  trivial. Suppose that we have already constructed a
  complete-intersection $W_k$ of pure codimension $k<m$ satisfying the
  conditions. We will show how to choose a polynomial equation $P$
  vanishing on $V$, with $\delta_P$ bounded, and such that
  $W_{k+1}=W\cap V(P)$ satisfies
  \begin{equation}
    \Sigma_{W_{k+1}} = \{p\in\M : \dim(V\cap\cL_p)>n-k-1\}.
  \end{equation}
  The claim then follows by induction on $k$.
  
  Let $D=\dim\M+1$ and let $\cP_D$ denote the space of polynomials in
  the ambient space of $\M$ of degree at most $D$. Consider
  $\tilde\M:=\cP_D^S\times\M$ and let $\tilde P\in\cO(\tilde\M)$ be
  given by
  \begin{equation}
    \tilde P(Q_1,\ldots,Q_S,\cdot) = Q_1P_1+\cdots+Q_SP_S, \qquad (Q_1,\ldots,Q_S)\in\cP_D^S.
  \end{equation}
  Set $\tilde W_{k+1}=\tilde W_k\cap V(\tilde P)$ where
  $\tilde W_k:=\cP_D^S\times W_k$.

  Let $p$ satisfy $\dim(V\cap\cL_p)<n-k$. By assumption
  $p\not\in\Sigma_{W_k}$. We claim that the codimension in $\cP_D^S$
  of the set
  \begin{equation}
    \{ Q\in\cP_D^S : (Q,p)\in\Sigma_{\tilde W_{k+1}}\}
  \end{equation}
  is at least $D$. Indeed, the condition is equivalent to the fact
  that $\tilde P$ does not vanish identically on any of the
  irreducible components of $\cL_p\cap W_k$. It is enough to check the
  codimension for each component $C$ separately. Since $V$ is
  set-theoretically cut out by $P_1,\ldots,P_S$ and
  $\dim(V\cap\cL_p)<n-k$, one of the polynomials $P_j$, say without
  loss of generality $P_1$, does not vanish identically on $C$. Then
  for any fixed value of $Q_2,\ldots,Q_S$, at most one value of
  $Q_1\rest C$ can give $\tilde P\rest C\equiv0$, and we have already
  seen in the proof of Lemma~\ref{lem:unlikely-codim} that the
  codimension of this affine linear condition is at least $D$.

  We now finish as in~\secref{sec:reparam-codim}. Namely, by
  Proposition~\ref{prop:SigmaV-complexity} we see that
  $\Sigma_{\tilde W_{k+1}}$ is algebraic and
  \begin{equation}
    \deg \Sigma_{\tilde W_{k+1}}=\poly(\deg(\vxi),\deg(V)).
  \end{equation}
  Set $Z=\clo \pi(\Sigma_{\tilde W_{k+1}})$ where
  $\pi:\tilde M\to\cP_D^S$ and note that by a dimension counting
  argument $Z$ has positive codimension. Choosing a point $Q\not\in Z$
  using Lemma~\ref{lem:Q-pt-select} and setting $P=\tilde P(Q,\cdot)$
  finishes the proof.
\end{proof}

\section{Proofs of the main theorems}

In this section we prove Theorem~\ref{thm:zero-count} and
Theorem~\ref{thm:wp-cover} by a simultaneous induction.  We will
assume in both proofs that $V$ is given by a complete intersection
$V=V(P_1,\ldots,P_m)$. For the general case we replace $V$ by a
complete intersection $W$ containing it as in
Proposition~\ref{prop:CI-choose}. Since $\Sigma_V=\Sigma_W$, the
statements for $V$ follow immediately from the statements for $W$.

To avoid repeating the expression
$\poly(\delta_\vxi,\delta_V,\log\dist^{-1}(\cB,\Sigma_V))$ we will say
simply that a quantity is \emph{appropriately bounded} if it admits
such a bound. Recall that, as explained in~\secref{sec:main-results},
we can and do assume that $R=1$ below.

\subsection{Proof of Theorem~\ref{thm:zero-count}}

We prove Theorem~\ref{thm:zero-count} in dimension $n$ assuming that
Theorem~\ref{thm:zero-count} holds for dimension at most $n-1$ and
that Theorem~\ref{thm:wp-cover} holds for dimension at most $n$.

Let $V':=V(P_1,\ldots,P_{n-1})$. Note that
$\Sigma_{V'}\subset\Sigma_V$. We start by passing to general position
with respect to $V$ and $V'$ as in~\secref{sec:perturb-main}. This has
the effect of slightly reparametrizing the time variables, and in the
new parametrization the original balls $\cB,\cB^2$ are contained in
balls of radius slightly larger than $1,1/2$. However dividing these
balls into $O(1)$ balls and rescaling time (i.e. rescaling $\vxi$), we
see that it is enough to prove Theorem~\ref{thm:zero-count} for
$\cB,\cB^2$ in the new parametrization.

Applying Theorem~\ref{thm:wp-cover} we
construct a collection of Weierstrass polydiscs
$\{\Delta_\alpha\subset\cB\}$ for $V'$ such that the union of the
$\Delta^2_\alpha$ covers $\cB^2$. Since $\#\{\Delta_\alpha\}$ is
appropriately bounded it will suffice to count the zeros of $P_n$ on
$V'$ inside each $\Delta^2_\alpha$ separately. Fix one such polydisc
$\Delta:=\Delta_\alpha$ and set $\Delta=D_z\times\Delta_w$ (in some
unitary system of coordinates). We also have the
$\mu:=e(V'\cap\cB,\Delta)$ is appropriately bounded.

Recall the analytic resultant defined in~\eqref{eq:analytic-res}. The
zeros of $P_n$ on $V'$ inside $\Delta$ correspond (with
multiplicities) to the zeros of $\cR_{\Delta,\cB\cap V'}(P_n)$ in $D_z$. We want to
count those zeros contained in $D_z^2$. Recall the following
consequence of Jensen's formula \cite{iy:jensen}.

\begin{Prop}\label{prop:jensen}
  Let $f:\bar D\to\C$ be holomorphic. Denote by $M$ (resp. $m$) the maximum
  of $|f(z)|$ on $\bar D$ (resp. $\bar D^2$). Then there exists a
  constant $C$ such that
  \begin{equation}
    \#\{z\in D^2 : f(z)=0 \} \le C\cdot\log\frac M m.
  \end{equation} 
\end{Prop}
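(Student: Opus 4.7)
The plan is the classical Jensen/Blaschke-product trick. By an affine change of coordinates in $z$ I may assume $D=D(1)$ is the unit disc centered at the origin, so that $D^2=D(1/2)$; this affects neither $M/m$ nor the zero count in $D^2$. The case $f\equiv 0$ is vacuous, so I also assume $m>0$.

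Let $z_1,\dots,z_N$ be the zeros of $f$ in $D^2$, counted with multiplicity, and form the finite Blaschke product
\[
B(z)=\prod_{j=1}^N\frac{z-z_j}{1-\bar z_j z}.
\]
Since $|B|\equiv 1$ on $\partial D$, the quotient $g:=f/B$ is holomorphic in a neighborhood of $\bar D$ with $|g|=|f|\le M$ on $\partial D$, hence $|g|\le M$ on all of $\bar D$ by the maximum modulus principle. Picking $z^*\in\bar D^2$ with $|f(z^*)|=m$ then yields $|B(z^*)|=|f(z^*)|/|g(z^*)|\ge m/M$.

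For the matching upper bound, each factor of $B(z^*)$ is the pseudohyperbolic distance $\rho(z^*,z_j)$, and a short direct calculation (using the identity $|1-\bar w z|^{2}-|z-w|^{2}=(1-|z|^{2})(1-|w|^{2})$) gives $\rho(z,w)\le 4/5$ uniformly for $z,w\in\bar D^2$. Hence $|B(z^*)|\le (4/5)^N$, and combining this with the previous inequality produces
\[
N\le \log(M/m)\big/\log(5/4),
\]
proving the proposition with $C=1/\log(5/4)$.

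There is no real obstacle here: the only step involving any computation is the pseudohyperbolic diameter bound for $\bar D^2$, which is essentially the observation that the smaller closed subdisc sits at positive Euclidean distance from $\partial D$.
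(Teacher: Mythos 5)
Your proof is correct, and it is essentially the standard argument behind the cited Jensen-type inequality: the paper does not prove Proposition~\ref{prop:jensen} itself but refers to \cite{iy:jensen}, where the bound is obtained by exactly this kind of Jensen/Blaschke-product reasoning. All the delicate points check out — $g=f/B$ is holomorphic on $\bar D$ because the poles $1/\bar z_j$ lie outside $\bar D$, the maximum principle applies since $|B|\equiv1$ on $\partial D$, and the pseudohyperbolic diameter bound $\rho(z,w)\le 4/5$ on $\bar D^{\,2}$ follows from the stated identity with $(1-|z|^2)(1-|w|^2)\ge 9/16$ and $|1-\bar w z|^2\le 25/16$ — so the constant $C=1/\log(5/4)$ is valid.
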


We apply this proposition to $P_n$ in $D_z$.  We first note that $M$
is a product of $|P_n|$ evaluated at $\mu$ points
$p_1,\ldots,p_\mu\in\B$. It is clear that
$\log |P_n(z_j)|\le\poly(\delta(P_n))$, so $\log M$ is appropriately
bounded.

It remain to show that $\log(1/m)$ is appropriately bounded. Let
$p_1,\ldots,p_\mu$ denote the points of $V'$ lying over the origin in
$\Delta'$. Consider the multiplicity operators
$\mo(P_1,\ldots,P_{n-1})$ with respect to the direction of the
$\vw$-coordinates (which we think of as a leaf of the foliated space
$L_{n-1}\M$). By Corollary~\ref{cor:mo-v-dist}, at every point $p_j$
there is such a multiplicity operator with
$\log|1/\mo_{p_j}(P_1,\ldots,P_{n-1})|$ appropriately bounded in
absolute value (here we use the fact that we perturbed to general
position). According to Lemma~\ref{lem:mo-v-wp} each point $p_j$ is
the center of a Weierstrass polydisc $\Delta_j$ \emph{in the same
  coordinate system}, and with the logarithms of all radii
appropriately bounded in absolute value.

Denote by $\cR_j:=\cR_{\Delta_j,\cB\cap V'}(P_n)$. The domains of all
these functions (and of $\cR$ itself) contain a disc $D$ of radius
$r$, with $\log (1/r)$ appropriately bounded. Note that
\begin{equation}
  |\cR(z)| \ge \frac{\prod_{j=1,\ldots,\mu} |\cR_j(z)|}{e^{\poly(\delta_\vxi,\delta_V)}}
\end{equation}
since the numerator contains the value of $P_n$ evaluated at every
point of $\cB\cap V'$ over $z$ (possibly more than once), and these
evaluations are always bounded from above by $e^{\poly(\delta(P_n))}$
as we have seen above. It will therefore suffice to find a point in
$D$ where $\log (1/|\cR_j|)$ is appropriately bounded for every
$j$. For this we use Lemma~\ref{lem:mo-v-resultant}. Namely, the lemma
shows that $\log(1/|\cR_j|)$ is appropriately bounded outside a union
of balls of total radius smaller than $r/\mu$, and taking union over
$j=1,\ldots,\mu$ one can find a point where this happens
simultaneously for every $j$. This shows that $\log(1/m)$ is
appropriately bounded and concludes the proof of
Theorem~\ref{thm:zero-count}.

\subsection{Proof of Corollary~\ref{cor:zero-count-any-codim}}

This follows immediately by applying Proposition~\ref{prop:CI-choose}
with $m=n$ and applying Theorem~\ref{thm:zero-count} to the $W$ that
one obtains.

\subsection{Proof of Theorem~\ref{thm:wp-cover}}

We will prove Theorem~\ref{thm:wp-cover} in dimension $n$ assuming
that Theorem~\ref{thm:zero-count} holds in smaller dimensions. It will
be enough to find a Weierstrass polydisc $\Delta\subset\cB$ around the
origin containing a ball of radius $r$ such that $1/r$ and
$e(\V\cap\cB,\Delta)$ are appropriately bounded. Indeed, if we can do this
then by a simple rescaling and covering argument we can find a
collection of polydiscs covering $\cB^2$.

According to Corollary~\ref{cor:wp-v-vol} it will be enough to show
that $\vol(\cB\cap V)$ is appropriately bounded. This volume can be
estimated using complex integral geometry in the spirit of Crofton's
formula. Namely, according to \cite[Proposition~14.6.3]{chirka} we
have
\begin{equation}\label{eq:complex-crofton}
  \vol(V\cap\cB) = \const(n) \int_{G(n,\codim V)} \#(V\cap\cB\cap L) \d L
\end{equation}
where $G(n,k)$ denotes the space of all $k$-dimensional linear
subspaces of $\C^n$ with the standard measure.

We now pass to general position with respect to $V$ as
in~\secref{sec:perturb-main}. Since our reparametrizing map can be
assumed to be close to the identity, this does not change the volume
by a factor of more than (say) two. Hence it is enough to estimate the
volume in the new coordinates, and by~\eqref{eq:complex-crofton} it
will suffice to show that $\#(V\cap\cB\cap L)$ is appropriately
bounded for every $\vxi$-linear subspace of dimension
$k=\codim V$. Since the $\cB\cap L$ are all unit balls in leafs of
$L_k\M$, the result now follows by the inductive application of
Theorem~\ref{thm:zero-count} (using the fact that $L_k\M$ has no
new unlikely intersections with $V$).

\section{Proof of Theorem~\ref{thm:alg-count}}
\label{sec:alg-count-proof}

We start by developing some general material on interpolation of
algebraic points in Weierstrass polydiscs. It is convenient to state
these results in the general analytic context without reference to
foliated spaces, and we take this viewpoint
in~\secref{sec:interpolation}. In~\secref{sec:alg-count-proof-final}
we finish the proof of Theorem~\ref{thm:alg-count}.

\subsection{Interpolating algebraic points}
\label{sec:interpolation}

Let $n\in\N$. The asymptotic constants in this section will depend
only on $n$. Let $\Delta=\Delta_\vx\times\Delta_\vw\subset\C^n$ be a
Weierstrass polydisc for an analytic set $X\subset\C^n$ of pure
dimension $m$. Let $F\in\cO(\bar\Delta)$. Let $\cM\subset\N^n$ be the
set
\begin{equation}
  \cM := \N^m\times\{0,\ldots,e(X,\Delta)-1\}^{n-m}.
\end{equation}
We also set $E:=e(X,\Delta)^{n-m}$. Recall the following result
combining \cite[Theorem~3]{me:rest-wilkie} and
\cite[Proposition~8]{me:analytic-interpolation}.

\begin{Prop} \label{prop:wp-expansion}
  On $\Delta^2$ there is a decomposition
  \begin{equation}
    F = \sum_{\alpha\in\cM} c_{\alpha} \vx^\alpha + Q, \qquad Q\in\cO(\Delta^2),
  \end{equation}
  where $Q$ vanishes on $\Delta^2\cap X$ and
  \begin{equation}
    \norm{c_\alpha \vx^\alpha}_{\Delta^2} = O(2^{-|\alpha|} \cdot \norm{F}_\Delta).
  \end{equation}
\end{Prop}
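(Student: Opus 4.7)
I would prove the expansion in two stages: a Weierstrass-division stage reducing $F$ modulo functions vanishing on $X$ to an expression polynomial in $\vw$ with each $w_j$-degree below $e:=e(X,\Delta)$, followed by a Taylor-expansion stage in the $\vx$-variables where Cauchy's estimates supply the geometric $2^{-|\alpha|}$ decay. The point of the Weierstrass polydisc hypothesis is precisely to ensure that the Weierstrass polynomials in each $w_j$ needed for division actually exist globally on $\Delta$ and that the division map is well-controlled in sup-norm.

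For the first stage, the Weierstrass property yields a proper ramified cover $\pi\colon X\cap\Delta\to\Delta_\vx$ of degree $e$. For each $j=1,\ldots,n-m$ define
\[
  P_j(\vx,w_j):=\prod_{q\in\pi^{-1}(\vx)}\bigl(w_j-w_j(q)\bigr)\in\cO(\Delta_\vx)[w_j];
\]
its coefficients (elementary symmetric functions of the sheet values) are holomorphic on $\Delta_\vx$ by properness of $\pi$, it is monic of degree $e$, and it vanishes on $X\cap\Delta$. By the defining property of a Weierstrass polydisc, $P_j$ has no zero on $\bar\Delta_\vx\times\partial\Delta_{w_j}$, so the standard contour-integral form of Weierstrass division by $P_j$ (reducing $w_j$-degrees to less than $e$) applies with uniform norm control. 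Iterating for $j=1,\ldots,n-m$ yields a decomposition $F=Q+\sum_{\beta\in\{0,\ldots,e-1\}^{n-m}}b_\beta(\vx)\vw^\beta$ on $\Delta$, with $Q\in\cO(\Delta)$ vanishing on $X\cap\Delta$, each $b_\beta\in\cO(\Delta_\vx)$, and $\|b_\beta\|_{\Delta_\vx}=O(\|F\|_\Delta)$.

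In the second stage, expand each $b_\beta$ as a Taylor series $b_\beta(\vx)=\sum_{\gamma\in\N^m}b_{\gamma,\beta}\vx^\gamma$ on $\Delta_\vx$. Cauchy's estimates give $|b_{\gamma,\beta}|\le\|b_\beta\|_{\Delta_\vx}\cdot r^{-|\gamma|}$ with $r$ the radius of $\Delta_\vx$, so $\|b_{\gamma,\beta}\vx^\gamma\|_{\Delta_\vx^2}=O(2^{-|\gamma|}\|b_\beta\|_{\Delta_\vx})$, and combining with the trivial bound $\|\vw^\beta\|_{\Delta_\vw^2}\le 2^{-|\beta|}\|\vw^\beta\|_{\Delta_\vw}$ and setting $\alpha:=(\gamma,\beta)\in\cM$, $c_\alpha:=b_{\gamma,\beta}$, yields $\|c_\alpha\vx^\alpha\|_{\Delta^2}=O(2^{-|\alpha|}\|F\|_\Delta)$. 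The resulting sum converges absolutely on $\Delta^2$ and reconstructs $F-Q$, completing the decomposition. The main obstacle is securing the uniform norm bound $\|b_\beta\|_{\Delta_\vx}=O(\|F\|_\Delta)$ with an absolute constant: this must hold irrespective of the geometric degeneracies of $X$ inside $\Delta$, and is exactly where the Weierstrass-polydisc condition plays a crucial role via the nonvanishing of $P_j$ on $\bar\Delta_\vx\times\partial\Delta_{w_j}$. Once this quantitative Weierstrass division step is in place — as provided by \cite[Theorem~3]{me:rest-wilkie} and \cite[Proposition~8]{me:analytic-interpolation} — the remainder of the argument is routine assembly.
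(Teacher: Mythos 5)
Your overall strategy --- Weierstrass division in the $\vw$-variables followed by Taylor expansion in $\vx$ with Cauchy estimates --- is indeed the strategy behind the cited sources, and your second (Taylor/Cauchy) stage is fine. The gap sits exactly where you flag "the main obstacle": the uniform bound $\norm{b_\beta}_{\Delta_\vx}=O(\norm{F}_\Delta)$ with a constant depending only on $n$. The justification you give --- that $P_j$ has no zero on $\bar\Delta_\vx\times\partial\Delta_{w_j}$ --- yields only $\min|P_j|>0$ by compactness, with no quantitative lower bound: the roots $w_j(q)$, $q\in X\cap\Delta$, may lie arbitrarily close to $\partial\Delta_{w_j}$, since the Weierstrass condition forbids fiber points from lying \emph{on} $\bar\Delta_\vx\times\partial\Delta_\vw$ but not from approaching it. Worse, dividing by the \emph{full} Weierstrass polynomial built from all $e$ sheets over $\Delta_\vx$, as you propose, provably cannot give the stated estimate. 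Take $m=1$, $n=2$, $\Delta=D(1)\times D(1)$, $X=\{w=1-\e\}\cup\{w=1-2\e\}$, and $F=w^k$ with $\e=1/k$. Then $\norm{F}_\Delta=1$ and your remainder is the unique degree-one interpolant of $w^k$ at $w=1-\e$ and $w=1-2\e$, whose $w$-coefficient is the divided difference $((1-\e)^k-(1-2\e)^k)/\e$, which is of order $k$; hence $\norm{c_{(0,1)}\,w}_{\Delta^2}$ is of order $k$, while the Proposition requires it to be $O(2^{-1}\norm{F}_\Delta)=O(1)$.

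The Proposition itself survives this example only because $Q$ is required to vanish merely on $X\cap\Delta^2$ (here empty, so $Q=F$ and all $c_\alpha=0$ work), not on $X\cap\Delta$. Your construction does not exploit this slack: it forces interpolation at the sheets of $X$ lying in $\Delta\setminus\Delta^2$, which is precisely where the blow-up occurs. A correct argument must discard (or otherwise neutralize) those outer sheets, or choose intermediate contour radii quantitatively separated from the root moduli, while keeping the coefficients holomorphic in $\vx$ and the constants depending only on $n$ --- and that is the actual content of \cite[Theorem~3]{me:rest-wilkie} and \cite[Proposition~8]{me:analytic-interpolation}, to which your final sentence defers. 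As written, the proposal therefore reduces the Proposition to its hardest ingredient rather than proving it. (Note that the present paper likewise does not reprove the statement; it simply recalls it from those two references.)
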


We now fix $\Phi\in\cO(\Delta)^{m+1}$.  In \cite{me:rest-wilkie}
Proposition~\ref{prop:wp-expansion} was used in combination with the
interpolation determinant method of Bombieri and Pila
\cite{bombieri-pila} to produce an algebraic hypersurface
interpolating the points of $X\cap\Delta^2$ where $\Phi$ takes
algebraic values of a given height in a fixed number field. However,
this method does not produce good bounds when one considers the more
general $[X\cap\Delta^2](g,h;\Phi)$ where the number field may
vary. Instead we will use an alternative approach proposed by Wilkie
\cite{wilkie:pw-notes}, which is based on the following variant of the
Thue-Siegel lemma. This idea was used in a slightly different context
by Habegger in \cite{habegger:approx}.

\begin{Lem}[\protect{\cite[Lemma 4.11]{waldschmidt:book}}]\label{lem:thue-siegel}
  Let $A\in\Mat_{\mu\times\nu}(\R)$. For any $N\in\N$ there exists a
  vector $\vv\in\Z^\nu\setminus\{0\}$ satisfying
  \begin{equation}
    \begin{aligned}
      \norm{\vv}_\infty&\le N+1, & \norm{A\vv}_\infty &\le N^{\frac{\mu-\nu}\mu} \norm{A}_\infty.
    \end{aligned}
  \end{equation}
\end{Lem}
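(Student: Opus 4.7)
The plan is to apply a standard pigeonhole argument in the style of Dirichlet and Siegel; this is classical and appears in Waldschmidt's book, but the strategy is easy to sketch. First I would dispose of the trivial case $\mu\ge\nu$: here the exponent $(\mu-\nu)/\mu\ge 0$, so $N^{(\mu-\nu)/\mu}\ge 1$, and one may simply take $\vv=\mathbf{e}_1$, for which $\norm{\vv}_\infty=1\le N+1$ and $\norm{A\vv}_\infty\le \norm{A}_\infty\le N^{(\mu-\nu)/\mu}\norm{A}_\infty$.

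For the nontrivial regime $\mu<\nu$ I would use the box principle. Consider $S:=\{0,1,\ldots,N\}^\nu$, a set of cardinality $(N+1)^\nu$. For every $\vv\in S$ each coordinate of $A\vv$ satisfies $|(A\vv)_i|\le\nu N\norm{A}_\infty$, so $A(S)$ lies inside an axis-aligned box $R\subset\R^\mu$ whose side is $O(\nu N\norm{A}_\infty)$. Partition $R$ into axis-parallel cubes of side $s$, chosen so that the number of cubes, roughly $(\nu N\norm{A}_\infty/s)^\mu$, is strictly smaller than $(N+1)^\nu$. Pigeonhole then produces two distinct vectors $\vv_1,\vv_2\in S$ whose images fall into a common cube; setting $\vv:=\vv_1-\vv_2$ gives a nonzero integer vector with $\norm{\vv}_\infty\le N\le N+1$ and $\norm{A\vv}_\infty\le s$.

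The remaining step is optimization: solving the pigeonhole inequality $(\nu N\norm{A}_\infty/s)^\mu<(N+1)^\nu$ for $s$ yields $s\sim N^{1-\nu/\mu}\norm{A}_\infty=N^{(\mu-\nu)/\mu}\norm{A}_\infty$, which matches the claimed bound. The only subtlety — really just bookkeeping — is keeping the combinatorial constants under control so as to recover the clean coefficient $1$ in front of $N^{(\mu-\nu)/\mu}\norm{A}_\infty$ as stated. I do not foresee any real obstacle; tightening the partition (for instance by symmetrizing to $\{-\lfloor N/2\rfloor,\ldots,\lceil N/2\rceil\}^\nu$ or by a sharper count of cubes meeting $A(S)$) absorbs the leftover factor, which is the content of Waldschmidt's careful statement. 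The entire proof is elementary, independent of the foliation-theoretic machinery of the paper, and needs no further input.
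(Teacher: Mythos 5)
Your pigeonhole argument is correct and is exactly the standard Dirichlet--Siegel box-principle proof; the paper gives no proof of its own here, simply citing Waldschmidt, whose proof is the same argument. The constant-$1$ bookkeeping you defer is indeed routine: with $\norm{A}_\infty$ taken as the $\ell^\infty\to\ell^\infty$ operator norm the image of $\{0,\dots,N\}^\nu$ sits in a box of side $N\norm{A}_\infty$, and subdividing into $k^\mu$ cubes with $k=\lceil (N+1)^{\nu/\mu}\rceil-1\ge N^{\nu/\mu}$ (using $(N+1)^{\nu/\mu}\ge N^{\nu/\mu}+1$ for $\nu\ge\mu$) yields the stated bound.
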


By combining Proposition~\ref{prop:wp-expansion} and
Lemma~\ref{lem:thue-siegel} we obtain the following.

\begin{Lem}\label{lem:interpolation-upper-bd}
  Let $d,N\in\N$. There exists a polynomial
  $P\in\Z[y_1,\ldots,y_{m+1}]\setminus\{0\}$ with $\deg P\le d$ and
  all coefficients bounded in absolute value by $N$, such that
  \begin{equation}
    \norm{(P\circ\Phi)\rest{X\cap\Delta^2}} \le \poly(d) E \norm{\Phi}_\Delta^d (N \log N) 2^{-d(E^{-1}\log N)^{\frac1{m+1}}}.
  \end{equation}
\end{Lem}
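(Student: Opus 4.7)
The plan is to apply the Thue--Siegel lemma (Lemma~\ref{lem:thue-siegel}) to extract an integer interpolating polynomial, using Proposition~\ref{prop:wp-expansion} to handle the analytic behaviour of $\Phi$. I would write $P = \sum_{|\beta|\le d} a_\beta \vy^\beta$ with $\nu := \binom{d+m+1}{m+1} = \Theta(d^{m+1})$ unknown integer coefficients and apply Proposition~\ref{prop:wp-expansion} monomial-wise to the functions $F := \Phi^\beta$, obtaining on $\Delta^2$
\[
\Phi^\beta = \sum_{\alpha\in\cM} c^\beta_\alpha \vx^\alpha + Q^\beta,
\]
with $Q^\beta$ vanishing on $X \cap \Delta^2$ and $|c^\beta_\alpha| \cdot r^\alpha = O(2^{-|\alpha|} \norm{\Phi}_\Delta^d)$, where $r^\alpha := \norm{\vx^\alpha}_{\Delta^2}$. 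By linearity, on $X \cap \Delta^2$ we get
\[
(P \circ \Phi)(\vx) = \sum_{\alpha \in \cM} \Big(\sum_\beta a_\beta c^\beta_\alpha\Big) \vx^\alpha,
\]
the error terms $\sum_\beta a_\beta Q^\beta$ dropping out.

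Next, fix a truncation $T$ and split $\cM$ into the head $|\alpha| \le T$, of cardinality $\mu = O(E T^m)$, and the tail $|\alpha| > T$. For the head, apply Lemma~\ref{lem:thue-siegel} to the $\mu \times \nu$ real matrix $A = (c^\beta_\alpha r^\alpha)$, whose entries have sup-norm $O(\norm{\Phi}_\Delta^d)$. Provided $T$ is chosen so that $\mu < \nu$, this yields integers $(a_\beta)$ with $|a_\beta| \le N+1$ satisfying
\[
\Big|\sum_\beta a_\beta c^\beta_\alpha\Big|\,r^\alpha \le N^{(\mu-\nu)/\mu}\,O(\norm{\Phi}_\Delta^d), \qquad |\alpha|\le T.
\]
The tail is controlled trivially using $\sum_{\alpha\in\cM,\,|\alpha|>T} 2^{-|\alpha|} = O(E \cdot 2^{-T/2})$, and combining the two (using $|\vx^\alpha| \le r^\alpha$ on $\Delta^2$) gives
\[
\norm{(P\circ\Phi)\rest{X\cap\Delta^2}} = O\!\Big(\mu\, N^{(\mu-\nu)/\mu}\,\norm{\Phi}_\Delta^d \,+\, \nu N E\,\norm{\Phi}_\Delta^d\, 2^{-T/2}\Big).
\]

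The decisive step is to optimise $T$. I would take $T = 2d(E^{-1}\log N)^{1/(m+1)}$, so that $2^{-T/2} = 2^{-d(E^{-1}\log N)^{1/(m+1)}}$, matching the desired exponent. With this choice $E T^m = O\big(d^m E^{1/(m+1)}(\log N)^{m/(m+1)}\big)$, which is at most $\nu/2 = \Theta(d^{m+1})$ in the relevant regime; this ensures $(\nu-\mu)/\mu \cdot \log N \gtrsim d(E^{-1}\log N)^{1/(m+1)}$, so the Thue--Siegel head term is actually dominated by the tail. The $\poly(d)\,E\,\norm{\Phi}_\Delta^d (N\log N)$ prefactor in the target absorbs the combinatorial factors $\mu$, $\nu$, the additional constants, and the slack between $|a_\beta|\le N+1$ and the claimed $N$.

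The main obstacle is this final optimisation: calibrating $T$ so that the three competing quantities $\mu < \nu$ (needed for Thue--Siegel to apply), $N^{(\mu-\nu)/\mu}$ (the Thue--Siegel decay), and $2^{-T/2}$ (the tail decay) conspire to produce exactly the exponent $d(E^{-1}\log N)^{1/(m+1)}$. A minor point of care is the regime where $d$ is small compared with $E^{1/(m+1)}(\log N)^{m/(m+1)}$: there Thue--Siegel yields no improvement, but in that range the target bound is already $\Omega(1)$ and is achieved by any single monomial $P = \vy^\beta$. Aside from this case split the rest is bookkeeping.
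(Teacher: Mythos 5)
Your proposal is correct and follows essentially the same route as the paper: expand each monomial $\Phi^\beta$ via Proposition~\ref{prop:wp-expansion}, apply the Thue--Siegel lemma to the head coefficients, bound the tail by the geometric decay, and calibrate the truncation parameter (your $T$, the paper's $k$) to $d(E^{-1}\log N)^{1/(m+1)}$ so that head and tail decay match. Your explicit handling of the degenerate regime where $\mu\ge\nu$ is a small point the paper leaves implicit, but otherwise the arguments coincide.
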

\begin{proof}
  Let $\Phi^\alpha$ for $\alpha\in\N^{m+1}$ denote the monomial in
  the $\Phi$ variables with the usual multiindex notation. Note that
  $\norm{\Phi^\alpha}\le\norm{\Phi}^{|\alpha|}$. For each $|\alpha|\le d$ apply
  Proposition~\ref{prop:wp-expansion} to $\Phi^\alpha$ to get
  \begin{equation}
    \Phi_\alpha = \sum_{\beta\in\cM} c_{\alpha,\beta} \vx^\beta + Q, \qquad Q\in\cO(\Delta^2),
  \end{equation}
  where $Q$ vanishes on $\Delta^2\cap X$ and
  \begin{equation}\label{eq:Phi-expansion-bd}
    \norm{c_{\alpha,\beta} \vx^\beta}_{\Delta^2} = O(2^{-|\beta|} \cdot \norm{\Phi}_\Delta^d).
  \end{equation}

  Fix $k\in\N$ to be chosen later. Using Lemma~\ref{lem:thue-siegel}
  we find a linear combination
  $\sum_{|\alpha|\le d}v_\alpha \Phi^\alpha$ with $v_\alpha$ integers
  and $|v_\alpha|<N$, not all zero, such that for every $|\beta|\le k$
  we have
  \begin{equation}\label{eq:thue-estimate}
    \norm{\textstyle{\sum}_{|\alpha|\le d} v_\alpha c_{\alpha,\beta}x^\beta}_{\Delta^2} = O(\norm{\Phi}_\Delta^d)\cdot N^{\frac{\mu-\nu}\mu},
    \qquad \text{where}\quad
    \begin{aligned}
      \mu &\sim E k^m \\
      \nu &\sim d^{m+1}.
    \end{aligned}
  \end{equation}
  We now write
  \begin{equation}
    \sum_{|\alpha|\le d} v_\alpha \Phi^\alpha =
    \sum_{|\beta|\le k}\sum_{|\alpha|\le d} v_\alpha c_{\alpha,\beta}x^\beta
    +\sum_{|\beta|>k} \sum_{|\alpha|\le d} v_\alpha c_{\alpha,\beta}x^\beta  =A+B.
  \end{equation}
  For $A$ we have by~\eqref{eq:thue-estimate} the estimate
  \begin{equation}
    A \le O(E k^m \norm{\Phi}_\Delta^d)\cdot N^{\frac{\mu-\nu}\mu},
  \end{equation}
  and for $B$ we have by~\eqref{eq:Phi-expansion-bd} the estimate
  \begin{equation}
    B \le O\big(N d^{m+1} \norm{\Phi}_\Delta^d \sum_{|\beta|>k} 2^{-\beta}\big) = O(N d^{m+1}\norm{\Phi}_\Delta^d2^{-k}).
  \end{equation}
  Choosing $k=d(E^{-1}\log N)^{1/(m+1)}$ proves the lemma.
\end{proof}

We will compare the upper bound of
Lemma~\ref{lem:interpolation-upper-bd} with the following elementary
lower bound at points where $\Phi$ takes algebraic values of bounded
height and degree.

\begin{Lem}[\protect{\cite[Lemma~14]{habegger:approx}}]\label{lem:algebraic-lower-bd}
  Let $P\in\Z[y_1,\ldots,y_{m+1}]$ be a polynomial of degree $d$ and
  all coefficients bounded in absolute value by $N$. Suppose that
  $\vy\in(\Qa)^{m+1}$ and $P(\vy)\neq0$. Then
  \begin{equation}
    |P(\vy)| \ge \big(d^{m+1} N H(\vy)^{d(m+1)}\big)^{-[\Q(\vy):\Q]}.
  \end{equation}
\end{Lem}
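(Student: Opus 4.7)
The result is the classical Liouville-type lower bound for nonzero algebraic numbers. The plan is to bound $H(P(\vy))$ from above using elementary height algebra and then to derive the desired lower bound on $|P(\vy)|$ via the product formula.

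\emph{Height upper bound.} Write $P(\vy)=\sum_{|\beta|\le d}c_\beta\vy^\beta$ with $c_\beta\in\Z$, $|c_\beta|\le N$. Using the standard height inequalities $H(\alpha\beta)\le H(\alpha)H(\beta)$, $H(\alpha^n)=H(\alpha)^n$, and $H(\alpha_1+\cdots+\alpha_M)\le M\cdot\max_iH(\alpha_i)$, each monomial satisfies
\begin{equation}
H(c_\beta\vy^\beta)\le N\cdot H(y_1)^{\beta_1}\cdots H(y_{m+1})^{\beta_{m+1}}\le N\cdot H(\vy)^{|\beta|}\le N\cdot H(\vy)^d.
\end{equation}
The number of monomials is at most $\binom{d+m+1}{m+1}\le d^{m+1}$ (with any combinatorial slack absorbed into the inequality below), so
\begin{equation}
H(P(\vy))\le d^{m+1}\cdot N\cdot H(\vy)^d\le d^{m+1}\cdot N\cdot H(\vy)^{d(m+1)}.
\end{equation}
The loose form on the right is the one appearing in the statement; the slack is real since $H(\vy)\ge 1$.

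\emph{Liouville inequality.} Set $\alpha:=P(\vy)$ and $K:=\Q(\vy)$, $g:=[K:\Q]$. The key ingredient is the standard bound
\begin{equation}
|\alpha|\ge H(\alpha)^{-g},\qquad\alpha\in K^*,
\end{equation}
proved as follows. Fix the archimedean place $v_0$ of $K$ corresponding to the chosen embedding of $\Qa$ into $\C$, with local degree $d_{v_0}\ge 1$. The product formula $\prod_v|\alpha|_v^{d_v}=1$ combined with $|\alpha|_v\le\max(1,|\alpha|_v)$ gives
\begin{equation}
|\alpha|_{v_0}^{-d_{v_0}}=\prod_{v\neq v_0}|\alpha|_v^{d_v}\le\prod_v\max(1,|\alpha|_v)^{d_v}=H(\alpha)^g,
\end{equation}
and extracting $d_{v_0}$-th roots yields $|\alpha|\ge H(\alpha)^{-g/d_{v_0}}\ge H(\alpha)^{-g}$. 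Substituting the height bound of the first step into this inequality gives the lemma.

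There is no genuine obstacle here: the argument reduces to elementary height algebra plus the product formula. The one minor subtlety worth flagging is that the exponent $[\Q(\vy):\Q]$ in the statement may strictly exceed $[\Q(\alpha):\Q]$, but this only weakens the bound, and the Liouville inequality applies verbatim in any number field containing $\alpha$.
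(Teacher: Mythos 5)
The paper does not prove this lemma; it is quoted verbatim from \cite[Lemma~14]{habegger:approx}. Your overall strategy --- an upper bound for $H(P(\vy))$ followed by the Liouville inequality via the product formula --- is exactly the standard one, and your Liouville step is correct. But the height upper bound as you derive it has a genuine gap: the inequality
\begin{equation}
  H(\alpha_1+\cdots+\alpha_M)\le M\cdot\max_i H(\alpha_i)
\end{equation}
is false. The correct elementary sum rule is $H(\alpha_1+\cdots+\alpha_M)\le M\prod_i H(\alpha_i)$, i.e. $h(\sum_i\alpha_i)\le\sum_i h(\alpha_i)+\log M$; already $H(1/4+1/3)=H(7/12)=12>8=2\max\big(H(1/4),H(1/3)\big)$. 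If you substitute the correct sum rule into your term-by-term computation you obtain $H(P(\vy))\le M\big(N H(\vy)^d\big)^M$, with $N$ and $H(\vy)^d$ raised to the number of monomials $M$, which is far weaker than the stated bound and does not yield the lemma.

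The fix is to avoid summing heights altogether and argue place by place, which is what Habegger does. For each place $v$ of $K=\Q(\vy)$ one has
\begin{equation}
  |P(\vy)|_v\le \e_v\cdot\max_\beta|c_\beta|_v\cdot\prod_{j}\max(1,|y_j|_v)^{d},
\end{equation}
where $\e_v$ is the number of monomials for archimedean $v$ and $\e_v=1$ for ultrametric $v$; since the $c_\beta$ are rational integers they contribute nothing at the finite places, so taking the weighted product over all $v$ gives $H(P(\vy))\le(\#\{\beta\})\cdot N\cdot H(\vy)^{d(m+1)}$, with $N$ appearing to the first power. (A minor secondary point: $\#\{\beta:|\beta|\le d\}=\binom{d+m+1}{m+1}$ is not always $\le d^{m+1}$, and the slack between $H(\vy)^{d}$ and $H(\vy)^{d(m+1)}$ that you invoke disappears when $H(\vy)=1$; this is a matter of matching Habegger's normalization rather than a substantive obstacle.) With the local estimate in place, your product-formula step goes through unchanged and the lemma follows.
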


For a subset $A\subset\C^n$ we denote
\begin{equation}
  A(g,h;\Phi) := \{ p\in A : [\Q(\Phi(p)):\Q] \le g \text{ and } h(\Phi(p))\le h\}.
\end{equation}
We now come to our interpolation result.

\begin{Prop}\label{prop:interpolation}
  The set $[\Delta^2\cap X](g,h;\Phi)$ is contained in the zero locus
  of $P\circ\Phi$, where $P\in\Z[y_1,\ldots,y_{m+1}]\setminus\{0\}$
  and
  \begin{align}
    \deg P&\sim g\cdot E\cdot (gh+\log\norm{\Phi}_\Delta)^m & h(P) &\sim E\cdot(gh+\log\norm{\Phi}_\Delta)^{m+1}.
  \end{align}
\end{Prop}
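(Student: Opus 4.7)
The plan is to apply Lemma~\ref{lem:interpolation-upper-bd} with carefully chosen parameters $d$ and $N$ to produce a nonzero $P\in\Z[y_1,\ldots,y_{m+1}]$ of degree $\le d$ and coefficient size $\le N$, and then to compare the resulting upper bound on $\norm{(P\circ\Phi)\rest{X\cap\Delta^2}}$ with the Liouville-type lower bound of Lemma~\ref{lem:algebraic-lower-bd} for $P(\vy)$ at $\vy=\Phi(p)\in(\Qa)^{m+1}$. If the upper bound is strictly smaller than the lower bound at every $p\in[\Delta^2\cap X](g,h;\Phi)$, then $P(\Phi(p))=0$, proving the containment. Since the lower bound in Lemma~\ref{lem:algebraic-lower-bd} only improves when $[\Q(\vy):\Q]<g$ or $h(\vy)<h$, it suffices to verify the strict inequality at the worst case $[\Q(\vy):\Q]=g$, $h(\vy)=h$.

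Write $L := gh + \log\norm{\Phi}_\Delta$. Taking logarithms in both lemmas, the inequality we need (after negating the lower bound) is
$$d(E^{-1}\log N)^{1/(m+1)}\log 2 \;>\; d\log\norm{\Phi}_\Delta + \log N + g\bigl[(m+1)\log d + \log N + d(m+1) h\bigr]$$
up to lower-order $O(\log d + \log E + \log\log N)$ contributions coming from the $\poly(d)\,E\,(N\log N)$ prefactor. The natural balance is
$$d \sim g\,E\,L^m, \qquad \log N \sim E\,L^{m+1},$$
for then $(E^{-1}\log N)^{1/(m+1)}\sim L$ and the dominant negative term is $\sim dL\log 2$. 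After dividing through by $d$, the right-hand side becomes $\log\norm{\Phi}_\Delta + (1+g)(E/d)L^{m+1} + g(m+1)h$, each summand of which is at most a definite fraction of $L$; by inflating the implicit constants in $d$ and $\log N$ one can make this strictly smaller than $L\log 2$. This yields exactly $\deg P \sim g E(gh+\log\norm{\Phi}_\Delta)^m$ and $h(P)\sim E(gh+\log\norm{\Phi}_\Delta)^{m+1}$, matching the claim.

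The main point to watch is the arithmetic bookkeeping: one has to check that the Thue--Siegel exponent $(\mu-\nu)/\mu$ entering Lemma~\ref{lem:interpolation-upper-bd}, together with the factor $g=[\Q(\vy):\Q]$ appearing in the exponent of Lemma~\ref{lem:algebraic-lower-bd}, combine so that the exponential decay strictly beats the exponential growth once the parameters are balanced. This is precisely the advantage of the Thue--Siegel route of \cite{wilkie:pw-notes,habegger:approx} over the interpolation-determinant route: the lower bound degrades only polynomially in $g$ inside the exponent, which is what allows a single polynomial $P$ to vanish on all of $[\Delta^2\cap X](g,h;\Phi)$ simultaneously and explains the extra factor of $g$ in the degree bound for $P$.
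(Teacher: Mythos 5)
Your proposal is correct and follows essentially the same route as the paper: construct $P$ via Lemma~\ref{lem:interpolation-upper-bd}, compare against the Liouville-type lower bound of Lemma~\ref{lem:algebraic-lower-bd}, divide the exponents by $d$, and balance with $d\sim gE(gh+\log\norm{\Phi}_\Delta)^m$ and $\log N\sim E(gh+\log\norm{\Phi}_\Delta)^{m+1}$ so that a large constant $C=C(m)$ forces a contradiction unless $P\circ\Phi$ vanishes. The only cosmetic imprecision is that after normalization the left-hand side is $C L\log 2$ rather than $L\log 2$ and the right-hand side is $O_m(L)$ rather than a sum of terms each below $L$, but this is exactly what your ``inflate the implicit constants'' step repairs, matching the paper's choice of $C^{m+1}$ in both parameters.
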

\begin{proof}
  Let $d,N\in\N$ and construct the polynomial $P$ as in
  Lemma~\ref{lem:interpolation-upper-bd}. At any point
  $\vx\in[\Delta^2\cap X](g,h;\Phi)$, if $P\circ\Phi(x)\neq0$ then
  \begin{multline}
    [\poly(d)\cdot N \cdot 2^{(m+1)hd}]^{-g} \le 
    |P\circ\Phi(\vx)| \le \\
    \poly(d) E \norm{\Phi}_\Delta^d (N \log N) 2^{-d(E^{-1}\log N)^{\frac1{m+1}}}
  \end{multline}
  and hence
  \begin{equation}\label{eq:interpolation-main-ineq}
    2^{-O(\frac{g \log N}d + gh +\log\norm{\Phi}_\Delta+\frac{\log E}d)} \le 2^{-(E^{-1} \log N)^{\frac1{m+1}}}.
  \end{equation}
  Now choose
  \begin{align}
    d&=C^{m+1}E (gh+\log\norm{\Phi}_\Delta)^m\cdot g \\
    \log N&=C^{m+1}E(gh+\log\norm{\Phi}_\Delta)^{m+1}.
  \end{align}
  Then~\eqref{eq:interpolation-main-ineq} becomes
  \begin{equation}
    2^{-O(gh +\log\norm{\Phi}_\Delta)} \le 2^{-C(gh +\log\norm{\Phi}_\Delta)}
  \end{equation}
  which is impossible for a sufficiently large constant $C=C(m)$, and
  we deduce that $P\circ\Phi$ vanishes on $[\Delta^2\cap X](g,h;\Phi)$
  as claimed.
\end{proof}

\subsection{Finishing the proof of Theorem~\ref{thm:alg-count}}
\label{sec:alg-count-proof-final}

Theorem~\ref{thm:alg-count} follows immediately from the following
inductive step, where we start with $\cW=\C^\ell$ and proceed until
$\dist(\cB,\Sigma(V,\cW))<\e$.

\begin{Prop}\label{prop:alg-count-step}
  Let $\cW\subset\C^\ell$ be an irreducible $\Q$-variety of positive
  dimension. Suppose that $\e:=\dist(\cB,\Sigma(V,W))$ is
  positive. Then there exists a collection of irreducible
  $\Q$-subvarieties $\{\cW_\alpha\subset\cW\}$ of codimension one such
  that
  \begin{equation}
    \cW\cap A(g,h) \subset \cup_\alpha \cW_\alpha
  \end{equation}
  and
  \begin{equation}\label{eq:alg-count-step-deg}
    \#\{\cW_\alpha\}, \max_\alpha \delta_{\cW_\alpha} =
    \poly(\delta_\vxi,\delta_V,\delta_\cW,\delta_\Phi,g,h,\log \e^{-1}).
  \end{equation}
\end{Prop}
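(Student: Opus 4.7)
The plan is to produce a family of integer polynomials $Q_\beta \in \Z[y_1,\ldots,y_\ell]$ vanishing on $\cW \cap A(g,h)$, via interpolation on a Weierstrass polydisc cover of an auxiliary variety, and then take the $\cW_\alpha$ as the $\Q$-irreducible components of the subvarieties $\cW \cap \{Q_\beta = 0\}$ over $\beta$.

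The first technical step is to apply Theorem~\ref{thm:wp-cover} with the bound expressed in terms of $\e$. The natural auxiliary variety is $V' := V \cap \Phi^{-1}(\cW) \subset \M$ with $\delta_{V'} = \poly(\delta_V,\delta_\cW,\delta_\Phi)$. The hypothesis $\dist(\cB, \Sigma(V, \cW)) \ge \e$ means that conditions (i) and (ii) of Definition~\ref{def:blocks} fail for all $p$ within $\e$ of $\cB$: (i) gives $\Phi\rest{\cL_p \cap V}$ finite, so $\dim(\cL_p \cap V') = \dim(\Phi(\cL_p \cap V) \cap \cW)$, and (ii) forces the latter to be strictly less than $\dim\cW$ (since $\cW$ is irreducible, an intersection of the image with $\cW$ of full dimension $\dim\cW$ would contain a local component of $\cW$). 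To further ensure that $\dist(\cB, \Sigma_{V'}) \gtrsim \e$---that is, to rule out ``excess'' improper intersections of $\Phi(\cL_p \cap V)$ with $\cW$ beyond those of type (ii)---I would invoke a general-position perturbation analogous to Section~\ref{sec:general-position}, applied to the pair $(V,\cW)$. Theorem~\ref{thm:wp-cover} applied to $V'$ on $\cB$ then produces a Weierstrass polydisc cover $\{\Delta_\beta\}$ of $\cB^2 \cap V'$ of complexity $\poly(\delta_\vxi,\delta_V,\delta_\cW,\delta_\Phi,\log\e^{-1})$.

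On each polydisc, set $m := \dim(V' \cap \cB)$, so $m+1 \le \dim\cW$ by the above. Using Lemma~\ref{lem:Q-pt-select}, one selects a $\Q$-linear projection $\pi_\beta : \C^\ell \to \C^{m+1}$ with small-height coefficients such that $\pi_\beta(\cW) = \C^{m+1}$, by avoiding the $\Q$-subvariety of projections where $\pi_\beta\rest\cW$ fails to be dominant. Applying Proposition~\ref{prop:interpolation} to $X = V' \cap \cB$ with $\tilde\Phi_\beta := \pi_\beta \circ \Phi$ produces a nonzero polynomial $P_\beta \in \Z[y_1,\ldots,y_{m+1}]$ of degree and log-height $\poly(\delta_\vxi,\delta_V,\delta_\cW,\delta_\Phi,g,h,\log\e^{-1})$, and the pullback $Q_\beta := P_\beta \circ \pi_\beta \in \Z[y_1,\ldots,y_\ell]$ vanishes on $A(g,h) \cap \cW \cap \Phi(\Delta_\beta^2 \cap V')$. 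Because $\pi_\beta(\cW) = \C^{m+1}$ and $P_\beta \ne 0$, the restriction $Q_\beta\rest\cW$ is not identically zero, so $\cW \cap \{Q_\beta = 0\}$ is a proper codimension-one subvariety of $\cW$.

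Decomposing each $\cW \cap \{Q_\beta = 0\}$ into $\Q$-irreducible components and collecting over all $\beta$ yields the desired $\{\cW_\alpha\}$; the bounds in~\eqref{eq:alg-count-step-deg} then follow from the effective estimates on $\deg Q_\beta, h(Q_\beta)$ and $\delta_\cW$ together with standard effective bounds on irreducible decomposition. I expect the main technical obstacle to be the first step above: carefully verifying (possibly after perturbation) that $\dist(\cB, \Sigma_{V'}) \gtrsim \e$, so that Theorem~\ref{thm:wp-cover} can be invoked with the correct polynomial dependence on $\log\e^{-1}$. This entails showing that accidental improper intersections between $\Phi(\cL_p \cap V)$ and $\cW$---which a priori are \emph{not} covered by conditions (i)--(ii) alone---can be ruled out by a suitable generic perturbation of the foliation or the auxiliary data, analogously to the treatment in Section~\ref{sec:general-position}.
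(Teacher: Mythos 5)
Your overall architecture matches the paper's: form $V'=V\cap\Phi^{-1}(\cW)$, observe that failure of conditions (i)--(ii) of Definition~\ref{def:blocks} forces $\dim(\cL_p\cap V')<\dim\cW$ for all $p$ within $\e$ of $\cB$, cover by Weierstrass polydiscs via Theorem~\ref{thm:wp-cover}, interpolate via Proposition~\ref{prop:interpolation} after projecting $\cW$ dominantly to a space of the appropriate dimension, and take the irreducible components of $\cW\cap\{Q_\beta=0\}$. The dimension bookkeeping and the argument that $Q_\beta$ does not vanish identically on $\cW$ are both sound (the paper uses a dominant coordinate projection where you use a generic $\Q$-linear one; this is immaterial).

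The step you flag as the main obstacle is indeed the crux, but the resolution you propose --- a general-position perturbation in the style of \secref{sec:general-position} --- is not the right mechanism and would not work as stated: the perturbations of that section only reparametrize time while preserving the leaves $\cL_p$, so they cannot remove improper intersections of $V'$ with leaves; moreover $V'$ is in general neither pure-dimensional nor of codimension $\le n$, so $\Sigma_{V'}$ is not even the object to which Theorem~\ref{thm:wp-cover} applies. The paper's fix is Proposition~\ref{prop:CI-choose} invoked with the codimension parameter deliberately set to $n-\dim\cW+1$ (not to $\codim V'$): this produces a complete intersection $W\supset V'$ with $\Sigma_W$ \emph{exactly equal} to $\{p:\dim(V'\cap\cL_p)\ge\dim\cW\}$, which your own step already places inside $\Sigma(V,\cW;\Phi)$; hence $\dist(\cB,\Sigma_W)\ge\e$ with no perturbation whatsoever, and Theorem~\ref{thm:wp-cover} applies to $W$ with the required $\log\e^{-1}$ dependence. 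The interpolation is then carried out on $\cB\cap W$, of pure dimension $\dim\cW-1$, with $\dim\cW$ coordinate functions. So: right approach, one genuine gap at the point you correctly identified, whose repair is a tool already present in the paper rather than the perturbation you reached for.
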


\begin{proof}[Proof of Proposition~\ref{prop:alg-count-step}]
  Let $m:=\dim\cW$. Set $V'=V\cap\Phi^{-1}(\cW)$. We claim that
  \begin{equation}
    \{ p: \dim(V'\cap\cL_p)\ge m \} \subset \Sigma(V,\cW;\Phi).
  \end{equation}
  Indeed, if $p\not\in\Sigma(V,\cW;\Phi)$ then $\Phi_{V\cap\cL_p}$ is
  finite. If $V'\cap\cL_p$ has a component $C$ of dimension at least
  $m$ then $\dim\Phi(C)\ge m$ and $C\subset\cW$, so $\Phi(C)$ is a
  component of $\cW$ contradicting the definition of
  $\Sigma(V,\cW;\Phi)$.
  
  Using Proposition~\ref{prop:CI-choose} we find a
  complete-intersection $W$ of codimension $n-m+1$ containing $V'$ and
  satisfying $\Sigma_W\subset\Sigma(V,\cW;\Phi)$ with appropriate
  control over $\delta_W$. Using Theorem~\ref{thm:wp-cover} we cover
  $\cB^2$ by sets $\Delta_\beta^2$ where $\Delta_\beta$ is a
  Weierstrass polydisc for $\cB\cap W$ and $\#\{\Delta_\beta\}$ and
  $e(\cB\cap W,\Delta_\beta)$ are bounded as
  in~\eqref{eq:alg-count-step-deg}.
  
  Choose a set of $m$ coordinates $S\subset\{1,\ldots,\ell\}$ such
  that the projection of $\cW$ to these coordinates is dominant. Using
  Proposition~\ref{prop:interpolation} we construct a polynomial
  $P_\beta\in\Z[y_1,\ldots,y_\ell]\setminus\{0\}$ depending only on
  the variables $\{y_s\}_{s\in S}$ such that
  $\delta(P_S)=\poly(g,h,\delta_\Phi)$ and $P_\beta\circ\Phi$ vanishes
  identically on $[\Delta_\beta^2\cap W](g,h;\Phi)$. Finally taking
  $\{\cW_\alpha\}$ to be the union of the collection of irreducible
  components of $\cW\cap\{P_\beta=0\}$ for every $\beta$ proves the
  claim.
\end{proof}

\section{Diophantine applications}
\label{sec:applications}

Theorem~\ref{thm:alg-count} gives, under suitable conditions, an
effective polylogarithmic version of the counting theorem of Pila and
Wilkie \cite{pila-wilkie}. The counting theorem has found numerous
applications in various problems of Diophantine geometry, and our
principal motivation in pursuing Theorem~\ref{thm:alg-count} is the
potential for effectivizing these applications. In this section we
illustrate how this can be achieved for two of the influential
applications of the counting theorem: Masser-Zannier's finiteness
result for simultaneous torsion points on elliptic squares
\cite{mz:torsion-annalen} and Pila's proof of the Andr\'e-Oort
conjecture for modular curves \cite{pila:ao}. Each of these directions
have led to significant progress and numerous additional results, many
of which seem to be amenable to the same ideas. We also prove a Galois
orbit lower-bound for torsion points on elliptic curves following an
idea of Schmidt. We focus on the most basic examples in each of these
directions to present the method in the simplest context, and will
address some of the more involved applications separately in the
future.

\subsection{Simultaneous torsion points}

Let $T\subset\C^4\times(\C\setminus\{0,1\})$ denote the fibered
product of two copies of the Legendre family,
\begin{equation}
  T = \{(x_1,y_1,x_2,y_2,\lambda) : y_j^2=x_j(x_j-1)(x_j-\lambda), j=1,2\}.
\end{equation}
The fiber of $T$ over $\lambda$ is an elliptic square
$E_\lambda\times E_\lambda$, and we use the additive notation for the
group law on this scheme. We will also write $P=(x_1,y_1)$ and
$Q=(x_2,y_2)$.

\begin{Thm}\label{thm:effective-mz}
  Let $C\subset T$ be an irreducible curve over a number field $\K$
  with non-constant $\lambda$. Suppose that no relation $nP=mQ$ holds
  identically on $C$, for any $(n,m)\in\N^2\setminus\{0\}$. Then at
  any point $c\in C$ where $P(c),Q(c)$ are both torsion, their
  corresponding orders of torsion are effectively bounded by
  $\poly(\delta_C,[\K:\Q])$.
\end{Thm}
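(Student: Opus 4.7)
To effectivize the Pila--Zannier proof of Masser--Zannier, I would apply Theorem~\ref{thm:alg-count} (via Corollary~\ref{cor:wilkie}) in place of the Pila--Wilkie counting theorem, and combine the resulting polylogarithmic upper bound with an effective Galois-orbit lower bound for torsion. The first step is to package the Legendre family together with its transcendental Betti data into an affine foliated $\K$-variety $\M$. Concretely one introduces a basis $\omega_1, \omega_2$ of periods of $E_\lambda$, together with $\omega_1', \omega_2'$, as algebraic coordinates on $\M$, imposing the Legendre relation; the vector field $\vxi$ acts on $(\lambda, \omega_j, \omega_j')$ by the Picard--Fuchs system and on the $T$-coordinates by the Gauss--Manin connection. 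One further adjoins the Betti coordinates $\Phi := (u_1, v_1, u_2, v_2)$ of $P, Q$, declared constant along $\vxi$, so that $\Phi \in \cO(\M)^4$. The pullback $V \subset \M$ of $C$ sits in leaves with the appropriate codimension, and the Fuchsian growth estimates of Appendix~\ref{appendix:fuchs-growth} bound $\delta_\vxi, \delta_V = \poly(\delta_C, [\K:\Q])$.

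Next I would verify the no-blocks hypothesis of Corollary~\ref{cor:wilkie} under the non-degeneracy assumption. The germ $\Phi\rest{\cL_p \cap V}$ is finite provided $C$ is non-isotrivial, which is the hypothesis of non-constant $\lambda$. A germ of an algebraic curve inside $\Phi(\cL_p \cap V)$ would yield a nontrivial $\Z$-linear relation among $u_1, v_1, u_2, v_2$ identically along $C$, which translates to an identity $nP = mQ$ on $C$, excluded by hypothesis. Corollary~\ref{cor:wilkie} then gives
\begin{equation*}
  \#A(g, h) = \poly(\delta_C, [\K:\Q], g, h).
\end{equation*}

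The third step converts simultaneous torsion into rational points and applies a Galois-orbit lower bound. At any $c \in C$ where $P(c), Q(c)$ are torsion of orders $N_P, N_Q$, the Betti coordinates of $P(c), Q(c)$ lie in $\tfrac{1}{N}\Z$ with $N = \mathrm{lcm}(N_P, N_Q)$, so $\Phi(c) \in A(1, O(\log N))$. After choosing $\cB$ large enough to contain, modulo monodromy, the $\lambda$-projections of all $\Gal(\bar\K/\K)$-conjugates of $c$ (which enters only through the $\log R$ factor of the counting theorem), these conjugates each contribute distinct elements of $A(1, O(\log N + \delta_C))$. By effective lower bounds on Galois orbits of torsion in the Legendre family (in the style of Silverberg, Masser, David), the orbit has size $\gg N^{2-\e}$. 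Combining,
\begin{equation*}
  N^{2-\e} \ll \poly(\delta_C, [\K:\Q], \log N),
\end{equation*}
which forces $N = \poly(\delta_C, [\K:\Q])$ and hence the desired bound on both $N_P$ and $N_Q$.

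The main obstacle is step one: the periods $\omega_j$ are multivalued transcendental functions of $\lambda$, and realizing the Gauss--Manin connection algebraically with polynomial control on $\delta_\vxi$ requires passing to an affine torsor where the heights of solution-germs are carefully controlled --- precisely the role of Appendix~\ref{appendix:fuchs-growth}. A secondary difficulty is making the Galois-orbit lower bound effective in $\delta_C$ and $[\K:\Q]$; for the Legendre family the relevant transcendence-theoretic results are effective in the sense needed, but some bookkeeping is required to track $\delta_C$ through them.
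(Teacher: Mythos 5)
The overall architecture (a foliation built from the Picard--Fuchs system, point counting via the main theorem, and comparison with a degree/Galois bound for torsion) matches the paper's, but there is a genuine gap at the heart of your step two: the no-blocks hypothesis of Corollary~\ref{cor:wilkie} is \emph{false} for this foliation, so that corollary cannot be invoked. The set $\Phi(\cL_p\cap V)$ is here (an open piece of) a three-dimensional set swept out by the two-dimensional complex-affine subspaces $\{(z(\lambda_0),w(\lambda_0))=(f(\lambda_0),g(\lambda_0))U\}$, one for each value of $\lambda_0$, and each of these is a positive-dimensional algebraic block in the sense of Definition~\ref{def:blocks}(ii). Your claim that a germ of an algebraic curve in the image would force a relation $nP=mQ$ on $C$ is not correct: functional transcendence (Masser--Zannier's Lemma~5.1) only rules out blocks \emph{not} contained in such a constant-$\lambda_0$ affine subspace. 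The paper therefore must use the full Theorem~\ref{thm:alg-count} with its $\e$-dependence, classify the blocks coming from nearby leafs (Lemmas~\ref{lem:U-finite} and~\ref{lem:U-blocks}), and observe that each such block is an affine subspace with complex angle $(f(\lambda_0):g(\lambda_0))$, hence meets the real locus in at most one point because $\Im(f/g)$ is bounded below by $e^{-\poly(\delta_C)}$ on the relevant ball; only then does the block decomposition yield a finite count. Relatedly, your $\Phi$ cannot be the real-analytic Betti coordinates ``declared constant along $\vxi$'' --- these are not regular functions on any algebraic $\M$, so Theorem~\ref{thm:alg-count} does not apply to them. One must instead adjoin a \emph{complex} matrix of unknowns $U$, take $\Phi:=U$, impose $(z,w)=(f,g)U$ as the variety $V$, and restrict to real $U$ only at the counting stage.

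A second, smaller gap: there is no single $\vxi$-ball $\cB$ of controlled radius containing lifts of all Galois conjugates of $c$, since conjugates of $\lambda(c)$ may approach the singularities $0,1,\infty$ of the Picard--Fuchs operator, where the periods blow up and the leaf leaves every bounded region. One first needs the height bound $h(\lambda(c))=\poly(\delta_C)$ for simultaneous-torsion parameters (Lemma~\ref{lem:torsion-height}), deduces that at least half of the conjugates stay at distance $e^{-\poly(\delta_C)}$ from the singularities, covers that region by $\poly(\delta_C)$ discs, and pigeonholes to find one disc and one branch of $C$ carrying at least $N(c)/\poly(\delta_C)$ conjugates. Finally, your reliance on David-type $\gg N^{2-\e}$ Galois lower bounds is a legitimate alternative to the paper's elementary route (Masser--Zannier's Lemma~7.1, which gives $n=\poly(\delta_C,[\Q(c):\Q],h(\lambda(c)))$ directly), but it imports exactly the transcendence machinery the paper is designed to avoid, and it still requires the same height bound on $\lambda(c)$ to control the dependence on the Faltings height of $E_{\lambda(c)}$.
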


The proof is given in~\secref{sec:effective-mz-proof}.
Theorem~\ref{thm:effective-mz} implies the finiteness of the set of
simultaneous torsion points, which is the main statement of
\cite{mz:torsion-annalen}. It also implies that the set of
simultaneous torsion points is effectively computable in polynomial
time: for each possible torsion order $k$ up to bound provided in the
theorem, one can compute the algebraic equations
$(P^k,Q^k,c)=(\infty,\infty,c)$ using the group law on $T$, intersect
with the equation defining $C\subset T$, and use elimination theory or
Grobner base algorithms to compute the sets of solutions $c$.

We remark that numerous variations on the theme of
Theorem~\ref{thm:effective-mz} have been studied by Masser-Zannier
\cite{mz:torsion-ajm,mz:torsion-adv,mz:torsion-pell-jems,mz:pell-integration}
and by Barroero-Capuano
\cite{bc:relations-in-powers,bc:unlike-elliptic-multiplicative,bc:pell-plms}
and Schmidt \cite{schmidt:relative-mm}. These include very interesting
applications to the solvability of Pell's equation in polynomials and
to integrability in elementary terms. Effective bounds for these
contexts, analogous to Theorem~\ref{thm:effective-mz}, should in
principle provide the last step toward effective solvability of these
classical problems. While we do not address these generalizations
directly in this paper, they do appear to be similarly amenable to our
methods. We have developed some of the material (most specifically the
growth estimates in Appendix~\ref{appendix:fuchs-growth}) with an eye
to treating the more general types of period maps arising in these
applications.

\subsection{Andr\'e-Oort for modular curves}

We refer the reader to \cite{pila:ao} for the general terminology
related to the Andr\'e-Oort conjecture in the context of $\C^n$. We
will prove the following.

\begin{Thm}\label{thm:ao-decidable}
  Let $V\subset\C^n$ be an algebraic variety over a number field $\K$.
  Then the degrees of all maximal special subvarieties, as well as the
  discriminants of all their special coordinates, are bounded by
  $\poly_n(\delta_V,[\K:\Q])$. Here \emph{the implied constant is not
    effective}. Moreover there exists an algorithm that computes the
  collection of all maximal special subvarieties of $V$ in
  $\poly_n(\delta_V,[\K:\Q])$ steps.
\end{Thm}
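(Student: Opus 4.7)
The plan is to follow Pila's proof \cite{pila:ao} of Andr\'e--Oort for $\C^n=Y(1)^n$, substituting our Theorem~\ref{thm:alg-count} for the Pila--Wilkie counting theorem. The polylogarithmic dependence in Theorem~\ref{thm:alg-count} is precisely what upgrades the qualitative finiteness conclusion to a polynomial degree bound; the only source of ineffectivity is Siegel's lower bound on class numbers, which is used once at the end.

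First I would set up an appropriate foliated space. Let $\M_0$ be the rank-three algebraic jet bundle on which the $j$-function appears as a leaf coordinate, coming from the third-order algebraic Schwarzian ODE satisfied by $j$; on $\M_0$ both $\tau$ and $j(\tau)$ are rational leaf coordinates. Set $\M:=\M_0^n$ with the product foliation generated by commuting vector fields $\vxi=(\xi_1,\dots,\xi_n)$, all defined over $\Q$. The leaves are uniformized by $\mathbb{H}^n$ and the map $\Phi\colon\M\to\C^n$ returning the $\tau$-coordinates is rational with $\delta_\Phi=O_n(1)$. Choose a $\vxi$-ball $\cB$ whose $\Phi$-image is a slight enlargement of the standard fundamental domain $\cF_0^n\subset\mathbb{H}^n$. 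Pull $V\subset\C^n$ back to $V'\subset\M$ along $j^n$; then $\delta_{V'}=\poly_n(\delta_V)$.

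Now let $p=(j(\tau_1),\dots,j(\tau_n))\in V$ be a special point with $\tau_i$ imaginary quadratic of discriminant $D_i$. Siegel's (ineffective) lower bound shows its Galois orbit over $\K$ has size at least $\prod_i |D_i|^{1/2-\e}$. Each Galois conjugate lifts to an algebraic point of $\cF_0^n\cap\Phi(\cB^2\cap V')$ whose $\tau$-coordinates are of degree~$2$ over $\Q$ and height $O(\log\max_i|D_i|)$. Applying Theorem~\ref{thm:alg-count} with $g=2[\K:\Q]$, $h=O(\log\max_i|D_i|)$ and any small fixed $\e>0$, these lifts are covered by a collection of $\poly_n(\delta_V,[\K:\Q],\log\max_i|D_i|)$ irreducible $\Q$-subvarieties $\{\cW_\alpha\}$ of comparable degree and height. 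If $\max_i|D_i|$ exceeds a sufficiently large polynomial in $\delta_V$ and $[\K:\Q]$, pigeonhole forces some positive-dimensional $\cW_\alpha$ to contain many lifts, and the Ax--Lindemann theorem for the modular setting then forces such a $\cW_\alpha$ to lie in the preimage of a proper weakly special subvariety $S\subset\C^n$. Splitting $V$ along $S$ and iterating on the resulting intersection and on the CM factors yields the claimed polynomial bound on modular levels and on discriminants of singular coordinates of all maximal special subvarieties.

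For the algorithmic conclusion, once the polynomial bound on modular level and CM discriminant is in hand one enumerates the $\poly_n(\delta_V,[\K:\Q])$ candidate maximal special subvarieties, each specified by a tuple of modular polynomials $\Phi_N$ of bounded level together with a tuple of singular moduli of bounded discriminant, and tests containment in $V$ by Gr\"obner-basis elimination; each test runs in polynomial time. The main obstacle is the quantitative version of the Ax--Lindemann step: turning a bound on $\delta_{\cW_\alpha}$ into a \emph{polynomial} bound on the modular level of the enveloping weakly special $S$. In the modular setting this is accessible through the explicit description of weakly special subvarieties by $\Phi_N$-equations, combined with intersection-theoretic degree and height estimates for modular curves; nevertheless, propagating the sharp $\poly_n$ form through the induction on weakly special factors, while coupling cleanly to the block bounds coming out of Theorem~\ref{thm:alg-count}, is the most delicate technical ingredient.
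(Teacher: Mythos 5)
Your overall strategy (foliation from the Schwarzian equation for $j$, the $\tau$-coordinates as $\Phi$, counting Galois conjugates of CM points against Siegel's lower bound) matches the paper's, but there are two genuine gaps. The first is that you apply Ax--Lindemann to the blocks $\cW_\alpha$ as if they were contained in $J^{-1}(V)$. Theorem~\ref{thm:alg-count} does not give that: it produces blocks satisfying only $\dist(\cB,\Sigma(V,\cW_\alpha))<\e$, i.e.\ blocks coming from \emph{nearby leaves}, which for this foliation are graphs of $(j(g_1\tau_1),\dots,j(g_n\tau_n))$ with $g_i\in\GL_2(\C)$ close to (but not equal to) the identity. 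Ax--Lindemann says nothing about such a block directly. The paper's Proposition~\ref{prop:ws-blocks-control} is the device that handles this: it shows a block from a leaf at distance $\e$ maps under $J$ into an $O_\cB(\e)$-neighborhood of the weakly special locus $V^\ws$, and Lemma~\ref{lem:dist-p-Vws} (via Chowla--Selberg) shows that most conjugates of a special point $p\notin V^\ws$ stay at distance $e^{-\poly_n(\delta_V)O_\e(\Delta(p)^\e)}$ from $V^\ws$, so that $\e_0$ can be chosen to exclude all positive-dimensional blocks from containing good conjugates. Without this mechanism your pigeonhole step does not produce a weakly special subvariety of $V$, only of some nearby deformation of $V$.

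The second gap is the one you flag yourself but do not fill: the ``quantitative Ax--Lindemann'' step converting block data into a polynomial bound on the level of the enveloping weakly special $S$, together with the induction ``splitting $V$ along $S$.'' The paper avoids this induction entirely by invoking \cite[Theorem~4]{me:effective-ao}, which computes $V^\ws$ effectively with $\delta(V^\ws)=\poly_n(\delta_V)$ and reduces the whole theorem to bounding $\Delta(p)$ for special \emph{points} $p\in V_\alpha\setminus V_\alpha^\ws$ of auxiliary varieties; the counting argument is then run once, with no iteration over weakly special factors. If you insist on your route you must actually supply the effective level bound and verify that the $\poly_n$ form survives the recursion, which is precisely the content you would otherwise import from that reference. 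Two smaller points: the fundamental domain is not compact, so you cannot take a single $\vxi$-ball $\cB$ covering (an enlargement of) $\cF_0^n$; the paper uses Duke's equidistribution theorem to place a positive proportion of the conjugates in a fixed compact $K\subset\Omega^n$ before covering by finitely many unit balls. And the counting is applied with $\Phi$ equal to the $\tau$-coordinates on $\hat V=\pi_y^{-1}(V)$, with degree $g=2n$ for the quadratic points $\tau_\sigma$, not $g=2[\K:\Q]$.
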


The proof is given in~\secref{sec:effective-ao-proof}. Note that this
is the only point in the present paper where the implied asymptotic
constant is not effectively computable in principle. The constants
depend on Siegel's asymptotic lower bound for class numbers, and
obtaining an effective form of this bound is a well-known and deep
problem. Effectivity of this universal constant notwithstanding,
Theorem~\ref{thm:ao-decidable} still establishes the polynomial-time
decidability of the Andr\'e-Oort conjecture in $\C^n$ for fixed
$n$. We also note that the constants do depend effectively on $n$, so
the result also establishes the decidability of Andr\'e-Oort for
$\C^n$ with $n$ considered as a variable. We remark that the
Andr\'e-Oort conjecture for more general products of modular curves
can be proved by reduction to the $\C^n$ case, and this certainly
preserves effectivity, but we do not pursue the details of this here.

\subsection{A Galois-orbit lower bound for torsion points}

We will prove the following.

\begin{Thm}\label{thm:torsion-deg}
  Let $E$ be an elliptic curve defined over a number field $\K$, and
  $p\in A$ a torsion point of order $n$. Then
  \begin{equation}
     n=\poly_g([\K:\Q],\hFal(E),[\K(p):\K]).
  \end{equation}
\end{Thm}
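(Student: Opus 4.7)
\emph{Proof plan.} The strategy follows Schmidt's adaptation of the Pila--Zannier counting method, combining Theorem~\ref{thm:alg-count} (or its Corollary~\ref{cor:wilkie}) with a group-theoretic input specific to torsion on $E$.

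\emph{Foliated setup.} I take $\M = E \times \A^1_z$ as a 2-dimensional affine $\K$-variety, with $E$ given in Weierstrass form $Y^2 = 4X^3 - g_2 X - g_3$ (so $h(g_j) = O(\hFal(E))$), equipped with the algebraic vector field $\xi = \partial_z + Y\partial_X + (6X^2 - g_2/2)\partial_Y$. One verifies $\delta_\xi = \poly(\hFal(E), [\K:\Q])$, and the integral leaf through the origin is the graph of the Weierstrass uniformization $\phi \colon z \mapsto (\wp(z), \wp'(z))$.

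\emph{Group-theoretic input.} The Galois action on the cyclic subgroup $\langle p\rangle$ is described by a character $\chi \colon \Gal(\bar\K/\K) \to (\Z/n\Z)^*$, $\sigma \mapsto k_\sigma$, whose image has cardinality $N := [\K(p):\K]$. For every $k \in (\Z/n\Z)^*$ one has $\sigma([k]p) = [k k_\sigma]p$, so the Galois orbit of $[k]p$ equals the coset $[k] \cdot \mathrm{Im}(\chi)$ and has the same size $N$. Hence the set $S := \{[k]p : k \in (\Z/n\Z)^*\}$ consists of $\varphi(n)$ distinct torsion points of exact order $n$, each of degree over $\Q$ at most $N[\K:\Q]$ and Weil height $O(\hFal(E))$ (using the standard comparison with the Néron--Tate height).

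\emph{Counting.} Let $V = E[n] \times \A^1_z \subset \M$ be the pullback of the $n$-torsion subscheme, cut out by the $n$-th division polynomial, so $\delta_V = \poly(n, \hFal(E), [\K:\Q])$. Let $\Phi \colon \M \to E$ be the projection. For every leaf $\cL_q$ the image $\Phi(\cL_q \cap V) \subset E[n]$ is zero-dimensional, so no algebraic arcs arise and Corollary~\ref{cor:wilkie} applies. Choosing $\cB$ a $\xi$-ball of radius $R = \poly(\hFal(E))$ with $\cB^2$ containing a fundamental parallelogram of the period lattice (which has area $\poly(\hFal(E))$ since $\mathrm{Im}(\tau) = O(\hFal(E))$), every element of $S$ contributes to $A(g,h)$ for $g = N[\K:\Q]$, $h = O(\hFal(E))$. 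Therefore
\[
  \varphi(n) \leq \#A(g,h) = \poly\bigl(\hFal(E), [\K:\Q], [\K(p):\K], n\bigr),
\]
which, combined with $\varphi(n) \geq n/(e^\gamma \log\log n)$, yields the target bound $n = \poly([\K:\Q], \hFal(E), [\K(p):\K])$.

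\emph{Main obstacle.} The dependence of $\delta_V$ on $n$ (since $E[n]$ has degree $\sim n^2$) is precisely what makes a naive application of Corollary~\ref{cor:wilkie} circular: the counting bound contains a factor $n^\alpha$ with $\alpha \ge 1$, dominating $\varphi(n) \lesssim n$. Following Schmidt, this is resolved by applying the counting inductively to the prime-power factorisation $n = \prod p_i^{e_i}$: at each level $p_i^{e_i}$ the relevant variety has degree $\poly(p_i^{e_i})$, the analogous set $S_{p_i^{e_i}}$ consists of $\varphi(p_i^{e_i})$ primitive $p_i^{e_i}$-torsion points with controlled Galois orbits coming from the $p_i$-adic representation, and the bounds combine multiplicatively via $\varphi(n) = \prod \varphi(p_i^{e_i})$. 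This cancels the spurious $n$-growth coming from $\delta_V$ at each stage and produces the required effective polynomial estimate.
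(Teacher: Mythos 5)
There is a genuine gap, and it sits exactly where you flagged it: the choice of $V$. Taking $V=E[n]\times\A^1_z$ makes $\delta_V=\poly(n)$, so any application of Corollary~\ref{cor:wilkie} returns $\varphi(n)\le\poly(n,\dots)$, which is vacuous. Your proposed repair by induction on the prime-power factorisation of $n$ does not remove this circularity: it is already present for $n=p$ prime, where the division polynomial has degree $\sim p^2$ and the count gives $\varphi(p)=p-1\le\poly(p,\dots)$, i.e.\ nothing. Multiplicativity of $\varphi$ cannot help because at every single level the degree of the division-polynomial variety swamps the number of points being counted. No choice of auxiliary variety whose complexity grows polynomially in the torsion order can close the argument.

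The way the paper (following Schmidt) escapes this is to make $V$ \emph{independent of $n$} and to let $n$ enter only through the \emph{height} of the points being counted. One enlarges the ambient space to $\hat\M:=\M\times\C^2_u\times\C_\tau$ and takes the fixed incidence variety $V=\{z=u_1+\tau u_2\}$ with $\Phi=(x,y,u_1,u_2)$, where $1,\tau_0$ generate the period lattice. A torsion point $[k]p$ of order dividing $n$ lifts to $z\in\C$ whose period coordinates $(u_1,u_2)$ are \emph{rational with denominator at most $n$}, hence of log-height $O(\log n)+\poly(h)$; its $(x,y)$-coordinates have degree $\le[\K:\Q]\cdot[\K(p):\K]$ and height $\poly(\hFal(E))$ by Zimmer. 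Since $\delta_V=\poly(h)$ does not involve $n$, Theorem~\ref{thm:alg-count} gives $n/\poly(h)\le\#A(g,t)=\poly(h,g,\log n)$ and the bound follows. Two further points you would need to address in this setup: (i) $\Phi(\cL\cap V)$ is \emph{not} zero-dimensional (the complex line $u_1+\tau_0u_2=z$ sweeps out positive-dimensional blocks), so Corollary~\ref{cor:wilkie} does not apply; one must use the full block statement of Theorem~\ref{thm:alg-count} together with the observation that $\Im\tau_0\ge1/\sqrt2$ forces each block, and each block from a sufficiently nearby leaf, to contain at most one \emph{real} point, and then count only $A=\R^2\cap\Phi(\hat\cB^2\cap V)$; (ii) one needs $\poly(h)$-many $\xi$-balls covering the relevant part of the leaf with radius $e^{\poly(h)}$, which the paper obtains by reparametrising $\xi$ by $1/2y$ and covering the $x$-plane minus the branch points by a logarithmic subdivision. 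Your group-theoretic step (the orbit $\{[k]p\}$ giving $\gg n/\poly(h)$ points of degree $\le[\K:\Q]\,[\K(p):\K]$) is fine and matches the paper's use of $p,p^2,\dots,p^n$.
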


The proof is given
in~\secref{sec:degree-bounds-proof}. Theorem~\ref{thm:torsion-deg} is
not new: it follows (with more precise dependence on the parameters)
from the work of David \cite{david:torsion-deg-elliptic}. It has also
been generalized to abelian varieties of arbitrary genus under some
mild conditions \cite{david:torsion-deg-abelian}, see also
\cite{mz:pell-integration} for the general case. The proof presented
here is different, replacing the use of transcendence methods by point
counting using an idea of Schmidt.

We restrict our formal presentation to the elliptic case as the
general case requires some additional technical tools that we do not
treat in this paper. However we sketch in~\secref{sec:bound-any-g} how
the proof extends to arbitrary genus (we restrict to principally
polarized abelian varieties and have not considered the general case).
We also mention further implications for Galois orbit lower bounds in
Shimura varieties in~\secref{sec:deg-bounds-future}.

\section{Proof of Theorem~\ref{thm:effective-mz}}
\label{sec:effective-mz-proof}

To simplify our presentation we will assume everywhere that $\K=\Q$,
but the proof is essentially the same in the general case.

\subsection{The foliation}
\label{sec:simul-tor-foliation}

We will construct a one-dimensional foliation encoding for each
$c\in C$ a pair of lattice generators $(f,g)$ for the curve
$E_{\lambda(c)}$ and a pair of elliptic logarithms $z,w$ for the
points $P(c),Q(c)\in E_{\lambda(c)}$. This can be done with the help
of the classical Picard-Fuchs differential operator as follows.

We will work in the space over $\C$ given by
\begin{equation}
  \M := C\times G, \qquad G:=(\Mat_{2\times2},+)\rtimes(\GL_2,\cdot)
\end{equation}
where we will use the matrix $M_L$ (resp. $M_P$) to denote the
coordinate on the second (resp. third) factor, and more specifically
write
\begin{align}
  M_L =& \begin{pmatrix}
    z & w \\ \dot z & \dot w    
  \end{pmatrix},
  &
  M_P =& \begin{pmatrix}
    f & g \\ \dot f & \dot g
  \end{pmatrix}.
\end{align}
We consider $G$ as a semidirect product with respect to the left
action of $\GL_2$ on $\Mat_{2\times 2}$ given by
$M_P\cdot M_L=M_P M_L$, i.e. with the product rule
\begin{equation}
  (M_L,M_P)(M_L',M_P') = (M_L+M_P M_L',M_P M_P').
\end{equation}

Let $\Sigma\subset\C_\lambda$ denote the set consisting of $0,1$, the
critical values of $\lambda\rest C$, and the points where
$\lambda=x_1(\lambda)$ or $\lambda=x_2(\lambda)$ (cf.
\cite[p.459]{mz:torsion-annalen} where a similar choice is made). We
set $A_\lambda=\C\setminus\Sigma$ and replace $C$ by the part of $C$
that lives over $A_\lambda$.

We define will take our foliation $\cF$ to be generated by a vector
field
\begin{equation}
  \xi := \pd{}\lambda + \pd{x_1}{\lambda}\pd{}{x_1}+\cdots+\pd{\dot w}{\lambda}\pd{}{\dot w}
\end{equation}
where we will show below how to express each of the
$\pd{}\lambda$-derivatives of the coordinates as regular functions on
$\M$.

We start with the coordinates of $C$. Since we assume $\lambda\rest C$
is submersive there is a unique lift of $\pd{}\lambda$, thought of as
a section of $T(A_\lambda)$, to a section $\xi_C$ of $T(C)$. The
coordinates of this section are regular functions, and their height
and degree can be readily estimated e.g. by writing out $T(C)$
explicitly as a Zariski tangent bundle. The $\pd{}{x_j}$ and
$\pd{}{y_j}$ coordinates of $\xi_C$ give our $\pd{x_j}{\lambda}$ and
$\pd{y_j}{\lambda}$

We now turn to the equations for $(f,g)$. Recall that each elliptic
period
\begin{equation}\label{eq:elliptic-period}
  I(\lambda) := \oint_{\delta(\lambda)}\omega, \qquad \omega=\frac{\d x}y
\end{equation}
where $\delta(\lambda)\in H_1(E_\lambda)$ is a continuous family
satisfies the Picard-Fuchs equation
\begin{equation}\label{eq:period-PF}
  L I(\lambda)=0, \qquad L=\lambda(1-\lambda)\pd{^2}{\lambda^2}+(1-2\lambda)\pd{}\lambda-\frac14.
\end{equation}
We encode the fact that $f$ satisfies this second order equation by
requiring
\begin{align}
  \pd{}\lambda f &= \dot f & \pd{}\lambda \dot f = \frac{(1/4)f - (1-2\lambda)\dot f}{\lambda(1-\lambda)}.
\end{align}
Note that $\lambda(1-\lambda)$ is invertible on $A_\lambda$. We impose
the same equations on $(g,\dot g)$.

Finally, to handle $z,w$, recall that each elliptic logarithm
\begin{equation}\label{eq:elliptic-logarithm}
  \hat I(\lambda) := \int_\infty^{P(\lambda)} \omega
\end{equation}
satisfies an inhomogeneous Picard-Fuchs equation. More explicitly,
applying the operator $L$ to $\hat I(\lambda)$ we obtain by direct
computation
\begin{equation}\label{eq:logarithm-PF}
  L\hat I(\lambda) = B+\int_\infty^{P(\lambda)} L \omega =B+\int_\infty^{P(\lambda)} \frac12 \d\left(\frac{y}{(x-\lambda)^2}\right)\\
  = B+\frac12 \frac{y_1(\lambda)}{(x_1(\lambda)-\lambda)^2}
\end{equation}
where $B$ denotes the terms coming for the derivation of the boundary
points, e.g. $\omega(P(\lambda)')$ for the first derivative. To make
this computation explicitly write $y:=\sqrt{x(x-1)(x-\lambda)}$ as a
function as a function of $x,\lambda$, express the integral as a path
integral in the $x$-plane, and use the usual derivation rules.

Denote the right hand side of~\eqref{eq:logarithm-PF} by $R_z$. Then $R_z$
is a regular function on $\M$ by our definition of $A_\lambda$, and
the explicit derivation readily shows that
$\delta_R=\poly(\delta_C)$. We may thus write the equations for $z$ as
\begin{align}
  \pd{}\lambda z &= \dot z &
  \pd{}\lambda \dot z = \frac{(1/4)z - (1-2\lambda)\dot z+R_z}{\lambda(1-\lambda)}.
\end{align}
We impose the a similar set of equations on $(w,\dot w)$, with the
right hand side $R_w$.

As a consequence of this construction, one leaf $\cL_0$ of our
foliation is given (locally) by the graph over $C$ of $(f,g,z,w)$
where $f,g$ are taken to be the two generators of the lattice
$E_\lambda$, and $z,w$ are taken to be elliptic logarithms of
$P(\lambda),Q(\lambda)$. As one analytically continues this leaf
$\cL_0$ obtains other choices for the generators $f,g$ and the
logarithms $z,w$.

We will also require a description of the remaining leafs. This is
fairly simple to obtain: our equations for $f,g$ are equivalent to the
Gauss-Manin linear equations $Lf=Lg=0$. For the standard leaf $\cL_0$
these are taken to be two linearly independent solutions, and any
other solution is obtained by replacing $M_P$ by $M_PG_P$ for some
$G_P\in\GL_2(\C)$. Similarly the equations for $z,w$ are equivalent to
$Lz=R_z,Lw=R_w$, and since $f,g$ form a basis of solutions of the
homogeneous equations on any leaf, any other leaf with the same $f,g$
is obtained by replacing $M_L$ by $M_L+M_PG_L$ for some
$G_L\in\Mat_{2\times2}(\C)$. In other words $\cF$ is a flat
structure of the principal $G$-bundle $\M$, where $G$ acts on itself
by multiplication on the right.

\subsection{Degree and height bounds}

We need two lemmas from \cite{mz:torsion-annalen} on the degree and
height of points $c\in C$ where either $P$ or $Q$ is torsion.

\begin{Lem}[\protect{\cite[Lemma~7.1]{mz:torsion-annalen}}]\label{lem:torsion-degree}
  Let $c\in C$ be such that $P(c)$ or $Q(c)$ is torsion of order
  $n$. Then
  \begin{equation}
    n \le \poly(\delta_C,[\Q(\lambda(c)):\Q],h(\lambda(c))).
  \end{equation}
\end{Lem}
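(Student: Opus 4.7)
The plan is to reduce the bound on $n$ to a classical polynomial estimate for the torsion order on an elliptic curve in terms of the degree of the torsion point and the Faltings height of the curve. Suppose without loss of generality that $P(c)$ is the torsion point of order $n$ (the case of $Q(c)$ is symmetric). The strategy proceeds in three steps: (i) propagate the degree and height bounds from $\lambda(c)$ to $c$; (ii) control the Faltings height of $E_{\lambda(c)}$ in terms of $h(\lambda(c))$; (iii) invoke a known effective bound on torsion orders.

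For step (i), the map $\lambda : C \to \A^1$ has degree at most $\deg C$, so the fiber over $\lambda(c)$ contains at most $\deg C$ points. A standard elimination-theoretic computation then gives $[\Q(c):\Q] \le \deg C \cdot [\Q(\lambda(c)):\Q]$ and $h(c) = \poly(\delta_C)\cdot(1+h(\lambda(c)))$. For step (ii), since $E_{\lambda(c)}$ is presented by the explicit Legendre model $y^2 = x(x-1)(x-\lambda(c))$, one has an elementary bound $h_{\mathrm{Fal}}(E_{\lambda(c)}) = O(1+h(\lambda(c)))$; this can be extracted from the classical comparison between the naive height of $j$, the modular height, and the Faltings height (e.g.\ via Silverman's formulae for the discriminant and $j$-invariant of Legendre curves), and any implicit logarithmic factors are absorbed into the polynomial bound.

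For step (iii), the point $P(c) = (x_1(c),y_1(c))$ lies in $E_{\lambda(c)}(\Q(c))$, so $[\Q(c)(P(c)):\Q(c)] \le 2$ and hence
\begin{equation}
[\Q(\lambda(c))(P(c)):\Q(\lambda(c))] \le 2\deg C.
\end{equation}
I would then apply the classical polynomial bound, due essentially to Masser (see also David's sharper version \cite{david:torsion-deg-elliptic}), which asserts that for any elliptic curve $E$ over a number field $K$ and any torsion point $R\in E(\bar K)$ of order $n$ one has
\begin{equation}
n = \poly([K(R):K], 1+h_{\mathrm{Fal}}(E)).
\end{equation}
Taking $K=\Q(\lambda(c))$ and $R=P(c)$ and combining with steps (i) and (ii) yields the desired bound $n = \poly(\delta_C, [\Q(\lambda(c)):\Q], h(\lambda(c)))$.

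The main substantive ingredient is the classical torsion order bound in step (iii), which is quoted rather than reproved. The only routine work is in step (i), where one must carefully track the polynomial dependence in passing from $\lambda(c)$ to $c$; this is a standard bookkeeping exercise using the explicit equations for $C \subset T$ and does not present serious difficulty. Step (ii) is similarly routine given the explicit Legendre presentation.
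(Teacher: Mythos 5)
Your argument is correct and is essentially the paper's: the paper proves this lemma simply by citing Masser--Zannier's Lemma~7.1 --- whose underlying proof is exactly the reduction you describe, passing from $\lambda(c)$ to $c$, bounding the Faltings height of the Legendre curve by $O(1+h(\lambda(c)))$, and invoking the classical Masser/David polynomial torsion bound --- and noting that tracking the constant there yields $\poly(\delta_C)$ (it also points to an independent point-counting proof via Theorem~\ref{thm:torsion-deg}). One minor imprecision: the classical bound you quote in step (iii) depends on the absolute degree $[K(R):\Q]$ (equivalently on $[K:\Q]$ as well as $[K(R):K]$), not merely on the relative degree, but since $[\Q(\lambda(c)):\Q]$ already appears in the target estimate this is harmless.
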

\begin{proof}
  This follows at once from the proof of
  \cite[Lemma~7.1]{mz:torsion-annalen}, where one just needs to track
  down the constant $c$ to find that it is $c=\delta_C$. We also give
  an independent proof in~\secref{sec:degree-bounds-proof}.
\end{proof}

\begin{Lem}[\protect{\cite[Lemma~8.1]{mz:torsion-annalen}}]\label{lem:torsion-height}
  Let $c\in C$ be such that $P(c)$ or $Q(c)$ is torsion. Then
  \begin{equation}
    h(\lambda(c)) \le \poly(\delta_C).
  \end{equation}
\end{Lem}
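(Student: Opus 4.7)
The plan is to deduce the bound on $h(\lambda(c))$ from Silverman's specialization theorem for canonical heights, using that the canonical height vanishes at torsion points.

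First I would observe that under the hypothesis of Theorem~\ref{thm:effective-mz} both $P$ and $Q$ are non-torsion as sections of $T\to C$: the assumption rules out any relation $nP=mQ$ with $(n,m)\in\N^2\setminus\{0\}$, and taking $m=0$ (resp.\ $n=0$) shows neither $P$ nor $Q$ can be identically torsion on $C$. Hence it suffices to prove the bound under the additional assumption that $P(c)$ is torsion (the other case is symmetric).

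Next I would apply Silverman's specialization theorem for canonical heights. For the non-torsion section $P$ of the elliptic scheme $T\to C$, there exist effective constants $c_0>0$ and $c_1\ge0$, both depending only on the data defining $C,P$, such that for every $c\in C(\Qa)$ outside a finite exceptional set one has
\begin{equation}
  \hat h_{E_{\lambda(c)}}(P(c)) \ge c_0\cdot h(\lambda(c)) - c_1.
\end{equation}
At a point $c$ where $P(c)$ is torsion, the left-hand side vanishes, and therefore $h(\lambda(c))\le c_1/c_0$. The main step is then to verify that $c_0$ and $c_1$ can be chosen as $\poly(\delta_C)$; this is a matter of tracking the classical proof of Silverman specialization (decomposition of $\hat h$ into a sum of Néron local heights, bounded comparison with the naive height of $x_P(c)\in\Q(\lambda(c))$, and convergence estimates for the Tate limit) while keeping explicit control on the heights and degrees of the rational functions $x_P,y_P$ on $C$ and of the Weierstrass coefficients of $E_\lambda$. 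All of these are $\poly(\delta_C)$ by construction, which yields the desired polynomial dependence.

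The main obstacle in this plan is the effective control on the constants in Silverman's specialization theorem. The proof is classical but the dependence on $\delta_C$ is not usually recorded; however for the Legendre family the Weierstrass model is completely explicit and the nondegeneracy condition (non-triviality of $P$ over $C$) implies that $\hat h$ of the generic fibre is strictly positive and effectively bounded below in terms of $\delta_C$. An alternative route, perhaps simpler for our purposes, is to avoid the canonical height entirely and instead compare $h(x_P(c))$ directly with $h(\lambda(c))$ using Northcott-style bounds for rational functions on $C$: if $P(c)$ is torsion of order $n$, then by Lemma~\ref{lem:torsion-degree} the order $n$ is controlled, and an explicit version of the Tate limit $\hat h = \lim 4^{-k} h(x_{2^k P})$ bounds $h(x_P(c))$ by $O(1)$, which in turn bounds $h(\lambda(c))$ since $x_P$ is a non-constant rational function on $C$ with $\delta_{x_P}=\poly(\delta_C)$.
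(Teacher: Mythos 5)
Your main plan is essentially the paper's argument: the paper proves this by citing Masser--Zannier and tracing the effective constants through the height-specialization proof of \cite[Proposition~3.1]{zannier:book}, where the key input is precisely Zimmer's explicit comparison between the N\'eron--Tate and Weil heights, so your Silverman-specialization route with $\poly(\delta_C)$ tracking is the intended one. (Only a caution on your ``alternative route'': invoking Lemma~\ref{lem:torsion-degree} there is circular, since that lemma bounds the torsion order in terms of $h(\lambda(c))$, which is the quantity you are trying to bound.)
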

\begin{proof}
  Without the explicit dependence on $\delta_C$ this is
  \cite[Lemma~8.1]{mz:torsion-annalen}. The dependence on $\delta_C$
  can be seen from the proof of
  \cite[Proposition~3.1]{zannier:book}. Specifically it comes down to
  Zimmer's estimate for the difference between the Neron-Tate height
  $\hat h(P)$ and Weil height $h(P)$ in the function field case, where
  the explicit form given in \cite[p.40,~Theorem]{zimmer} shows that
  the asymptotic constants are $\poly(\delta_C)$.
\end{proof}

Recall that we defined $A_\lambda:=\C\setminus\Sigma$ for some finite
set $\Sigma$. For $\delta>0$ we define
$\Lambda_\delta\subset A_\lambda$ as
\begin{equation}
  \Lambda_\delta := \{ \lambda : |\lambda|<\delta^{-1},\ \forall\sigma\in\Sigma: |\lambda-\sigma|>\delta \}.
\end{equation}
We record a consequence of Lemma~\ref{lem:torsion-height}.

\begin{Lem}{\protect{\cite[Lemma~8.2]{mz:torsion-annalen}}}\label{lem:orbit-distribution}
  Let $\lambda\in A_\lambda$. Then for
  $\delta=2^{-\poly(\delta_C,h(\lambda))}$ at least half of the Galois
  conjugates of $\lambda$ are in $\Lambda_\delta$.
\end{Lem}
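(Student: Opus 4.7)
Let $d=[\Q(\lambda):\Q]$ and let $\lambda=\lambda_1,\ldots,\lambda_d$ be the Galois conjugates of $\lambda$. The plan is to bound, by a Liouville-type argument, both the number of conjugates of large modulus and the number that cluster close to $\Sigma$, and to show that for $\delta$ as in the statement each of these exceptional sets has size less than $d/4$.

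First, since $\Sigma$ is defined by polynomial equations of degree and height $\poly(\delta_C)$ (the critical values of $\lambda\rest C$ are cut out by the vanishing of a Jacobian on $C$, and the conditions $\lambda=x_j(\lambda)$ are explicit), one checks that $|\Sigma|=\poly(\delta_C)$ and that each $\sigma\in\Sigma$ satisfies $[\Q(\sigma):\Q],h(\sigma)=\poly(\delta_C)$. Second, from the height formula $d\cdot h(\lambda)=\log|a_0|+\sum_j\log^+|\lambda_j|$, where $a_0$ is the leading coefficient of the minimal polynomial $p(x)\in\Z[x]$ of $\lambda$, the number of $j$ with $|\lambda_j|>\delta^{-1}$ is at most $d\cdot h(\lambda)/\log\delta^{-1}$.

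The main step is to control how many $\lambda_j$ land within $\delta$ of some element of $\Sigma$. Group $\Sigma=\bigsqcup_s\Sigma_s$ into Galois orbits, and for each orbit let $q_s(x)\in\Z[x]$ be the minimal polynomial with leading coefficient $b_{0,s}$ and roots $\{\sigma_{s,k}\}$. Since $\lambda\in A_\lambda$ implies $\lambda\not\in\Sigma$, the polynomials $p$ and $q_s$ are coprime, hence
\begin{equation}
\bigl|\mathrm{Res}(p,q_s)\bigr|=|a_0|^{\deg q_s}|b_{0,s}|^{d}\prod_{j,k}|\lambda_j-\sigma_{s,k}|\ge 1.
\end{equation}
Bounding $\log|a_0|\le d\cdot h(\lambda)$ and $\log|b_{0,s}|\le\poly(\delta_C)$ yields the lower bound
\begin{equation}
\sum_{j,k}\log|\lambda_j-\sigma_{s,k}|\ge -d\cdot\poly(\delta_C,h(\lambda)).
\end{equation}
On the other hand, from $|\lambda_j-\sigma_{s,k}|\le 2\max(1,|\lambda_j|)\max(1,|\sigma_{s,k}|)$ and the height bounds on $\lambda$ and $\sigma_s$ we get $\sum_{j,k}\log^+|\lambda_j-\sigma_{s,k}|\le d\cdot\poly(\delta_C,h(\lambda))$. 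Writing $N_s$ for the number of pairs with $|\lambda_j-\sigma_{s,k}|<\delta$ and combining the two estimates,
\begin{equation}
N_s\log\delta^{-1}\le d\cdot\poly(\delta_C,h(\lambda)).
\end{equation}

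Summing over the $\poly(\delta_C)$ orbits and adding the large-modulus count, the total number of ``bad'' conjugates is $\le d\cdot C(\delta_C,h(\lambda))/\log\delta^{-1}$ for some explicit $C=\poly(\delta_C,h(\lambda))$. Choosing $\log\delta^{-1}=4C$ makes this $\le d/4<d/2$, which is the claimed conclusion. No step is a serious obstacle; the only care required is to verify that the Galois orbit $\Sigma_s$ analysis applies uniformly (in particular that $\lambda$ is never a conjugate of any $\sigma_{s,k}$, which follows from $\lambda\in A_\lambda$), and that the implicit polynomial dependencies in $\delta_C$ arising from the description of $\Sigma$ are tracked honestly.
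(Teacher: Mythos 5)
Your proposal is correct and follows essentially the same route as the paper, which simply cites Masser--Zannier's Lemma~8.2 and sketches the identical idea: height upper bounds for $\lambda$ and for $\lambda-\sigma$, $\sigma\in\Sigma$, force that on average over the Galois orbit these quantities cannot be too small or too large. Your resultant inequality $\abs{\mathrm{Res}(p,q_s)}\ge 1$ is just the standard repackaging of the Liouville/height lower bound for the products $\prod_j\abs{\lambda_j-\sigma_{s,k}}$, so the two arguments coincide.
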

\begin{proof}
  The proof is the same as
  \cite[Lemma~8.2]{mz:torsion-annalen}. Briefly, we have an upper
  bound on the heights of $\lambda$ and $\lambda-\sigma$ for
  $\sigma\in\Sigma$, and this means that averaging over the Galois
  orbit none of these can be too small (or too big) in absolute value.
\end{proof}

\subsection{Setting up the domain for counting}
\label{sec:simul-torsion-domain}

Let $c\in C$ be such that $P(c),Q(c)$ are both torsion, and let $n$
denote the maximum among their orders of torsion and
$N(c):=[\Q(c):\Q]$. According to Lemma~\ref{lem:torsion-height} we
have $h(\lambda(c))=\poly(\delta_C)$. Then by
Lemma~\ref{lem:orbit-distribution} at least half of the Galois orbit
of $\lambda(c)$ lies in a set $\Lambda_\delta$ with some
$\delta=2^{-\poly(\delta_C)}$. Moreover
\begin{equation}\label{eq:n-vs-Nc}
  n=\poly(\delta_C,N(c))
\end{equation}
by Lemma~\ref{lem:torsion-degree}.

We choose a collection of $\poly(\delta_C)$ discs
$D_i\subset A_\lambda$ such that
\begin{align}
  D_i^{1/4}&\subset\Lambda_{\delta/2}, & \Lambda_\delta&\subset\cup_i D_i.
\end{align}
This is possible by elementary plane geometry using a logarithmic
subdivision process. For example, it is enough to show that for each
$r>0$, one can make such a choice of discs $D_i$ with
$D_i^{1/4}\subset\Lambda_{r/2}$ to cover
$\Lambda_r\setminus\Lambda_{2r}$. This is equivalent, after rescaling
by $r$, to proving the same fact for $r=1$, and here the number of
discs $D_i$ is easily seen to depend polynomially on the number of
points in $\Sigma$.

In conclusion, we proved the following.
\begin{Lem}\label{lem:branch-choice}
  There exists one disc $D=D_i$, and one branch of the curve $C$ over
  $D_i$, such that the number of Galois conjugates $c_\sigma$ with
  $\lambda(c_\sigma)\in D_i$ and $(P(c_\sigma),Q(c_\sigma))$ in the
  chosen branch of $C$ is at least $N(c)/\poly(\delta_C)$.
\end{Lem}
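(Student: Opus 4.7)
The plan is a straightforward double pigeonhole, applied first to the finite cover of $\Lambda_\delta$ by the discs $D_i$ and then to the fiber $\lambda^{-1}(D_i)\cap C$.

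First I would transfer the conclusion of Lemma~\ref{lem:orbit-distribution} from the Galois orbit of $\lambda(c)$ to the Galois orbit of $c$. Let $\sigma$ run through $\Gal(\bar\Q/\Q)$ and write $c_\sigma:=\sigma(c)$. The fiber of the map $\Gal(\bar\Q/\Q)\cdot c\to \Gal(\bar\Q/\Q)\cdot\lambda(c)$ induced by $\lambda$ has constant cardinality $N(c)/[\Q(\lambda(c)):\Q]$ because $\Gal$ acts transitively on each fiber. Hence the proportion of Galois conjugates $c_\sigma$ with $\lambda(c_\sigma)\in\Lambda_\delta$ equals the proportion of conjugates of $\lambda(c)$ that lie in $\Lambda_\delta$, which by Lemma~\ref{lem:orbit-distribution} is at least one half. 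Thus at least $N(c)/2$ of the $c_\sigma$ satisfy $\lambda(c_\sigma)\in\Lambda_\delta$.

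Next, since $\Lambda_\delta\subset\cup_i D_i$ with $\#\{D_i\}=\poly(\delta_C)$, the pigeonhole principle produces a single disc $D=D_i$ such that
\begin{equation}
\#\{\sigma:\lambda(c_\sigma)\in D\}\ge \frac{N(c)}{2\cdot\#\{D_i\}}=\frac{N(c)}{\poly(\delta_C)}.
\end{equation}
Now over $D\subset A_\lambda$ the restriction $\lambda\rest C\colon \lambda^{-1}(D)\cap C\to D$ is an unramified covering (we removed critical values of $\lambda\rest C$ when defining $A_\lambda$) of degree at most $\deg C\le\delta_C$, so it splits into at most $\delta_C$ connected branches. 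Applying pigeonhole a second time, one of these branches contains $(P(c_\sigma),Q(c_\sigma))$ for at least $N(c)/\poly(\delta_C)$ of the remaining $\sigma$, which is the claimed bound.

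The only point requiring any care is the first paragraph — the transfer from conjugates of $\lambda(c)$ to conjugates of $c$ — which requires the uniform fiber-size observation above; the rest is a routine double counting. All numerical losses ($\#\{D_i\}$ and $\deg C$) are $\poly(\delta_C)$, so the final proportion is of the claimed form.
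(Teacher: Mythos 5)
Your proposal is correct and follows essentially the same route as the paper, which derives the lemma from the covering $\Lambda_\delta\subset\cup_i D_i$ by $\poly(\delta_C)$ discs and a double pigeonhole over discs and branches (of which there are at most $\deg C$, the covering being unramified over $A_\lambda$ by the choice of $\Sigma$). Your explicit transfer from the Galois orbit of $\lambda(c)$ to that of $c$ via the constant fiber size $N(c)/[\Q(\lambda(c)):\Q]$ is left implicit in the paper but is exactly the right justification.
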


\subsection{Growth estimates for the leaf}
\label{sec:leaf-growth}

We will consider the ball $\cB$ in $\cL_0$ corresponding to $D^{1/2}$
in the $\lambda$-coordinate, with the $P,Q$ coordinates corresponding
to the branch of $C$ chosen in Lemma~\ref{lem:branch-choice}. To apply
Theorem~\ref{thm:alg-count} we must estimate the radius of the ball
$\B_R$ containing this leaf. This can possibly be done by hand for the
elliptic case treated in this paper, but we give a more general
approach using growth estimates for differential equations which seems
easier to carry out in more general settings.

\begin{Rem}
  The main difficulty is to obtain appropriate estimates for the
  elliptic logarithms $z,w$. These are given by incomplete elliptic
  integrals. In the early examples considered by Masser-Zannier, these
  endpoints were taken to have a constant $x$-coordinates, say
  $x=2,3$. In such cases the incomplete integrals can be estimated in
  a straightforward manner.

  When one considers an arbitrary curve $C$, the integration endpoints
  vary with $c\in C$. It is then necessary to carefully choose the
  integration path to avoid passing near singularities, and to track
  how the integration path is deformed as one analytically continues
  over a domain in $C$. In general, throughout such a deformation the
  length of the integration path may unavoidably grow as it picks up
  copies of vanishing cycles by the Picard-Lefschetz
  formula. Effectively controlling this phenomenon in terms of the
  degree and height of $C$ already appears fairly difficult to do by
  hand.
\end{Rem}

We start with the coordinates $P,Q$. Since
\begin{equation}
  \log \dist^{-1}(D^{1/2},\Sigma) = \poly(\delta_C)
\end{equation}
one can check that the coordinates $P,Q$ are bounded by
$e^{\poly(\delta_C)}$. For instance one may use the general effective
bounds for semialgebraic sets proved in \cite{basu:bounds}, though for
this special case much more elementary arguments would suffice. We
proceed to consider the remaining coordinates, which are given by
(transcendental) elliptic integrals and require a more delicate
approach.

Consider first the elliptic periods $f,g$. Fix some
$\lambda_0\in\C\setminus\{0,1\}$, say $\lambda_0=1/2$. For some fixed
choice of the integration paths staying away from
$0,1,\lambda_0,\infty$, we can directly estimate
\begin{equation}
  |f|,|\dot f|,|g|,|\dot g|,\frac1{\Im(f/g)} < M_0
\end{equation}
at $\lambda=\lambda_0$ with $M_0$ an effective constant. Indeed for
such a path the integrals are nicely convergent and one can
approximate them up to any given precision effectively and find such a
constant. Our goal is to deduce an effective estimate for these
quantities after analytic continuation from $\lambda_0$ to $D^{1/2}$.

Recall that $f,g$ satisfy the Picard-Fuchs differential
equation~\eqref{eq:period-PF}. Since this is a Fuchsian equation, the
theorem of Fuchs \cite[Theorem~19.20]{sergei:book} implies that $f,g$
(and their derivatives) grow polynomially as one approaches the
singular locus of the operator (here $\lambda=0,1,\infty$) along
geodesic lines on $\P^1$. In Appendix~\ref{appendix:fuchs-growth} we
prove an effective version of this theorem. Specifically, using
Theorem~\ref{thm:fuchs-growth} we get for any $\lambda\in D^{1/2}$ the
estimate
\begin{equation}
  |f|,|\dot f|,|g|,|\dot g| < e^{\poly(\delta_C)}.
\end{equation}
Here we can and do assume for instance that we analytically continue
the leaf from $\lambda_0$ to $D^{1/2}$ along some sequence of discs in
$\P^1$ as explained in the comment following
Theorem~\ref{thm:fuchs-growth}, staying at distance
$e^{-\poly(\delta_C)}$ from the singularities. We absorb $M_0$ in the
asymptotic notation.

The estimate for $\Im (f/g)$ requires a different argument. The ratio
of periods $f/g$ defines a map $D_i^{1/4}\to\H$, and by the
Schwarz-Pick lemma we have
\begin{equation}
  \diam_H( (f/g)(D^{1/2}_i) ) \le \diam_{D^{1/4}_i} D^{1/2}_i = \const.
\end{equation}
Thus as we continue from $\lambda_0$ to $D^{1/2}$ along a finite sequence of
discs $D_i$ the ratio $f/g$ varies by at most $\poly(\delta_C)$ in
$\H$. In particular $\Im^{-1}(f/g)<e^{\poly(\delta_C)}$ in $D^{1/2}$.

The proof for the elliptic logarithms $z,w$ is similar to $f,g$. At
the origin $\lambda_1$ of $D$ we choose $z,w$ to be given by an
integral~\eqref{eq:elliptic-logarithm} with some standard choice of
the path far from $0,1,\lambda_1$. Then as before we can estimate
$|z|,|\dot z|,|w|,\dot w|$ at $\lambda_1$ by
$e^{\poly(\delta_C)}$. Our goal is to prove the same in
$D^{1/2}$. Recall that $z,w$ satisfy a non-homogeneous Picard-Fuchs
equation~\eqref{eq:logarithm-PF}. Here the right-hand side consists of
the regular functions $R_z,R_w$ on $A_\lambda$, which can be estimated
from above by $\poly(\dist^{-1}(\lambda,\Sigma))$ in the same way as
estimating the branches $P,Q$. Now using
Theorem~\ref{thm:fuchs-growth} again gives
\begin{equation}
  |z|,|\dot z|,|w|,|\dot w| < e^{\poly(\delta_C)}.
\end{equation}
To conclude, we have the following.

\begin{Lem}\label{lem:growth-estimates}
  For any $\lambda\in D^{1/2}$ we have effective estimates
  \begin{equation}
    |f|,|\dot f|,|g|,|\dot g|,|z|,|\dot z|,|w|,|\dot w|,\frac1{\Im(f/g)} \le e^{\poly(\delta_C)}.
  \end{equation}
  In other words, the ball $\cB$ constructed above is contained in
  $\M_R$ for $\log R=\poly(\delta_C)$.
\end{Lem}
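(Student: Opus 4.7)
The plan is to build up the estimates coordinate by coordinate, starting from a fixed base point where everything can be bounded by elementary integral estimates, and then propagating the bounds by analytic continuation along a controlled chain of discs in $A_\lambda$.

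First I would fix a base point $\lambda_0 \in \C \setminus \{0,1\}$ well away from the singular locus, say $\lambda_0 = 1/2$, and choose a concrete integration path for each of the period integrals~\eqref{eq:elliptic-period} and elliptic logarithm integrals~\eqref{eq:elliptic-logarithm} that stays uniformly away from $\{0,1,\lambda_0,\infty\}$ and from the branch points of $y$. Such a path gives nicely convergent integrals, from which one obtains an absolute constant $M_0$ bounding $|f|,|\dot f|,|g|,|\dot g|,|z|,|\dot z|,|w|,|\dot w|$ and $1/\Im(f/g)$ at $\lambda_0$. The constant $M_0$ is effective and independent of $\delta_C$ and can be absorbed into the $\poly(\delta_C)$ bound.

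Next I would propagate the bounds on the periods $(f,\dot f,g,\dot g)$ from $\lambda_0$ to $D^{1/2}$ by analytic continuation. Because the construction of $D$ in~\secref{sec:simul-torsion-domain} ensures $\dist(D^{1/2},\Sigma)\ge e^{-\poly(\delta_C)}$, and because $C$ and the branch choice were fixed in Lemma~\ref{lem:branch-choice}, I can choose a chain of discs in $\P^1 \setminus \{0,1,\infty\}$ of length $\poly(\delta_C)$ connecting $\lambda_0$ to $D^{1/2}$, each disc kept at distance $e^{-\poly(\delta_C)}$ from the singular set $\{0,1,\infty\}$. Since $f,g$ satisfy the Picard--Fuchs equation~\eqref{eq:period-PF}, which is Fuchsian on $\P^1$ with singular set $\{0,1,\infty\}$, Theorem~\ref{thm:fuchs-growth} in the appendix yields along this chain an upper bound $|f|,|\dot f|,|g|,|\dot g|\le e^{\poly(\delta_C)}$ on $D^{1/2}$.

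For $\Im(f/g)$, direct growth estimates from the Picard--Fuchs equation are insufficient, since a linear combination of solutions can have arbitrarily small imaginary part even when each solution is polynomially bounded. Instead I would exploit that $f/g$ defines a holomorphic map from any simply connected subdomain of $A_\lambda$ to the upper half-plane $\H$, and apply the Schwarz--Pick lemma disc-by-disc: if $D_i^{1/4}$ contains $D_i^{1/2}$ with bounded hyperbolic distance, then the hyperbolic diameter of $(f/g)(D_i^{1/2})$ in $\H$ is bounded by an absolute constant. Chaining through $\poly(\delta_C)$ discs from $\lambda_0$ to the target gives a total hyperbolic displacement in $\H$ of size $\poly(\delta_C)$, and since at $\lambda_0$ the value $f/g$ lies at hyperbolic distance $O(\log M_0)$ from, say, $i$, we conclude $\Im^{-1}(f/g) \le e^{\poly(\delta_C)}$ throughout $D^{1/2}$.

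Finally, for the elliptic logarithms $z,w$ I would repeat the first Fuchsian argument, now applied to the \emph{inhomogeneous} equation~\eqref{eq:logarithm-PF}. The inhomogeneous terms $R_z,R_w$ are regular on $A_\lambda$ with $\delta(R_z),\delta(R_w)=\poly(\delta_C)$, and on $D^{1/2}$ they are bounded by $\poly(\dist^{-1}(\lambda,\Sigma))=e^{\poly(\delta_C)}$ by elementary algebraic estimates (or by Basu's bounds \cite{basu:bounds}). Theorem~\ref{thm:fuchs-growth} is stated for inhomogeneous Fuchsian equations, so it again transports the base-point bounds along the chain to yield $|z|,|\dot z|,|w|,|\dot w|\le e^{\poly(\delta_C)}$ on $D^{1/2}$. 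The main obstacle I expect is the bookkeeping for the chain of discs: one must verify that Theorem~\ref{thm:fuchs-growth} actually gives $\poly(\delta_C)$ in the exponent rather than something worse, which requires the chain length, the lower bound on the distance to $\Sigma$, and the heights of the coefficients of $L$ and of $R_z,R_w$ to all be $\poly(\delta_C)$; all of these have been arranged by the construction of $D$.
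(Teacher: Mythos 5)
Your proposal is correct and follows essentially the same route as the paper: bound everything at a fixed base point $\lambda_0=1/2$ by direct integral estimates, transport $|f|,|\dot f|,|g|,|\dot g|$ (and then $|z|,|\dot z|,|w|,|\dot w|$ via the inhomogeneous equation with $R_z,R_w$ bounded by $e^{\poly(\delta_C)}$) along a controlled chain of discs using the effective Fuchsian growth estimate of Theorem~\ref{thm:fuchs-growth}, and handle $\Im^{-1}(f/g)$ separately by the Schwarz--Pick argument. You also correctly identified the key bookkeeping point --- that the chain length, the distance to $\Sigma$, and the heights of the coefficients must all be $\poly(\delta_C)$ --- which is exactly what the paper's construction of $D$ arranges.
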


\subsection{Setting up the counting}
\label{sec:counting-setup}

We will be interested in counting representations of $z,w$ as rational
combinations of $f,g$. For this it will be convenient to expand our
ambient space and foliation. Let
\begin{equation}
  \hat\M := \M \times \Mat_{2\times2}(\C)_U
\end{equation}
where $U$ denotes the coordinate on the second factor in matrix form.
We define the foliation $\hat\cF$ on $\hat\M$ as the product of the
foliation $\cF$ on $\M$ with the full-dimensional foliation on the
second factor (i.e. where a single leaf is the entire space). We will
work with a ball $\hat\cB$ of radius $\hat R$ contained in
$\hat\B_{\hat R}$, where $\hat R$ will be suitably chosen later.

Consider the subvariety $V\subset\hat\M$ given by
\begin{equation}
  V := \{ (z,w) = (f,g) U \}.
\end{equation}
Note that we do not restrict the entries of $U$ to $\R$, as this would
not be covered by Theorem~\ref{thm:alg-count}. Let $\hat\cL_0$ denote
the lifting of the standard leaf to $\hat\M$. We will apply
Theorem~\ref{thm:alg-count} with $\Phi:=U$. Let $G$ act on
$\Mat_{2\times2}(\C)_U$ on the right by on the right by the formula
\begin{equation}
  U\cdot(G_L,G_P) = G_P^{-1}(U+G_L).
\end{equation}
Then the diagonal action on $\hat\M$ restricts to an action of $G$ on
$V$, and the map $\Phi$ is of course $G$-equivariant.  We use this to
deduce two functional transcendence statements for all leafs from the
corresponding statements for the standard leaf.

\begin{Lem}\label{lem:U-finite}
  The map $\Phi\rest{\hat\cL_p\cap V}$ is finite for any $p\in\hat\M$.
\end{Lem}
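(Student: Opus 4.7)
The plan is to use the $G$-equivariance of $\Phi$ to reduce the statement to the standard leaf $\hat\cL_0$, and then to rule out positive-dimensional fibres there via the inhomogeneous Picard--Fuchs equation.

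Under the diagonal right action of $G$ on $\hat\M$, the rows $(f,g)$ and $(z,w)$ of $M_P,M_L$ transform by $(f,g)\mapsto(f,g)G_P$ and $(z,w)\mapsto(z,w)+(f,g)G_L$, while $U\mapsto G_P^{-1}(U+G_L)$. A direct check shows that the relation $(z,w)=(f,g)U$ is preserved, so $V$ is $G$-invariant, and $\Phi$ is equivariant with respect to the resulting affine action on $\Mat_{2\times 2}(\C)$. Since $\cF$ is a flat structure on the principal $G$-bundle $\M$, the $G$-action is (simply) transitive on germs of leaves of $\cF$, and hence on germs of leaves of $\hat\cF$, at each base point; every germ $\hat\cL_p$ is therefore a $G$-translate of $\hat\cL_0$. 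Under this translate $\Phi\rest{\hat\cL_p\cap V}$ is conjugate to $\Phi\rest{\hat\cL_0\cap V}$ by an affine automorphism of the target, and finiteness is preserved. It thus suffices to treat the standard leaf.

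For $\hat\cL_0$, fix $U_0\in\Mat_{2\times 2}(\C)$. The fibre $\Phi^{-1}(U_0)\cap\hat\cL_0\cap V$ is identified with the set of $q\in\cL_0$ satisfying
\begin{equation*}
  z=(U_0)_{11}\,f+(U_0)_{21}\,g,\qquad w=(U_0)_{12}\,f+(U_0)_{22}\,g.
\end{equation*}
Each of these is a single analytic equation on the one-dimensional curve $\cL_0$, so either it vanishes identically on a branch or it cuts a discrete set. The fibre is positive-dimensional only if both equations vanish identically on a common branch; it therefore suffices to rule out such identical vanishing for either one. Suppose the first holds identically on some branch of $\cL_0$. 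Applying the Picard--Fuchs operator $L$ of~\eqref{eq:period-PF} and using $Lf=Lg=0$ would give $Lz\equiv 0$ on that branch, contradicting $Lz=R_z=\tfrac12\,y_1/(x_1-\lambda)^2$ from~\eqref{eq:logarithm-PF}. Indeed $R_z\not\equiv 0$ on $\cL_0$: otherwise $y_1\equiv 0$ on $C$, which would force $P$ to be identically $2$-torsion and yield the forbidden relation $2P=0\cdot Q$. The analogous argument with $Lw=R_w\not\equiv 0$ handles the second equation, so the fibre is finite.

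I expect the only slightly delicate point is unwinding the $G$-equivariance from the semidirect product structure of $G$; the Picard--Fuchs cancellation and the non-vanishing of $R_z,R_w$ extracted from the hypothesis on $(P,Q)$ are essentially immediate.
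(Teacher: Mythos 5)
Your reduction to the standard leaf via the $G$-equivariance of $\Phi$ and the flatness of the principal bundle is exactly the paper's first step, and it is fine. The divergence is in how you handle $\hat\cL_0$: the paper simply invokes the functional transcendence statement \cite[Lemma~5.1]{mz:torsion-annalen} (a one-dimensional fiber forces $z,w$ to be algebraic over $f,g$), whereas you try to replace it by a direct Picard--Fuchs computation. That replacement has a genuine gap. You write $Lz=R_z=\tfrac12\,y_1/(x_1-\lambda)^2$, but equation~\eqref{eq:logarithm-PF} reads $L\hat I=B+\tfrac12\,y_1/(x_1-\lambda)^2$, where $B$ collects the boundary terms coming from differentiating the variable endpoint $P(\lambda)$ of the incomplete integral (e.g. $\omega(P(\lambda)')$). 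Since $B$ is generically nonzero, the implication ``$R_z\equiv0\Rightarrow y_1\equiv0\Rightarrow P$ is $2$-torsion'' does not follow, and your justification that $R_z\not\equiv0$ collapses.

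The deeper point is that $R_z\equiv0$ is \emph{equivalent} to $z$ being a constant $\C$-linear combination $z=c_1f+c_2g$ on each branch (since $f,g$ span the solution space of $L$), i.e.\ to $P$ being a flat section of the local system. Excluding this under the hypothesis that no relation $nP=mQ$ holds identically is not a pointwise computation on $C$; it needs a global input. One correct route: analytically continuing the identity $z=c_1f+c_2g$ along a loop $\gamma$ gives $(M_\gamma-I)c\in\Z^2$ for every monodromy matrix $M_\gamma$ of the restricted Legendre family; since $\lambda$ is non-constant on $C$ the monodromy is of finite index in $\Gamma(2)$ and contains $M_\gamma$ with $M_\gamma-I$ invertible over $\Q$, forcing $c\in\Q^2$ and hence $P$ identically torsion, which is the forbidden relation $nP=0\cdot Q$. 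This monodromy (or, equivalently, functional transcendence) step is precisely what the citation of \cite[Lemma~5.1]{mz:torsion-annalen} supplies in the paper's proof and what is missing from yours. If you add it, your argument becomes a valid and somewhat more economical alternative, since you only need the weaker fact that $z$ (or $w$) is not a $\C$-linear combination of the periods rather than full algebraic independence.
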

\begin{proof}
  If the map is not finite then there is some $U_0$ whose fiber,
  i.e. the set
  \begin{equation}
    \{\lambda\in A_\lambda : (z(\lambda),w(\lambda))=(f(\lambda),g(\lambda))U_0\},
  \end{equation}
  is locally of dimension one. For the standard leaf $\hat\cL_0$ this
  contradicts the functional transcendence lemma
  \cite[Lemma~5.1]{mz:torsion-annalen}, as it implies $z,w$ are
  algebraic over $f,g$. Since all other leafs are obtained by the
  $G$-action, and $\Phi$ is equivariant, the same follows for all
  other leafs.
\end{proof}

\begin{Lem}\label{lem:U-blocks}
  Let $\cW\subset\C^4$ be a positive dimensional algebraic block such
  that $\Sigma(V,\cW)$ meets a ball $\cB\subset\hat\cL_0$. Then $\cW$
  is contained in the affine linear space defined by
  \begin{equation}\label{eq:U-blocks}
    (z(\lambda_0),w(\lambda_0)) =  (f(\lambda_0),g(\lambda_0)) U
  \end{equation}
  for some $\lambda_0\in\lambda(\cB)$.
\end{Lem}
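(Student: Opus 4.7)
The plan is to combine the finiteness of $\Phi\rest{\hat\cL_p\cap V}$ from Lemma~\ref{lem:U-finite} with the Masser--Zannier functional transcendence result \cite[Lemma~5.1]{mz:torsion-annalen}. Under the standing hypothesis that no relation $nP=mQ$ holds identically on $C$, that lemma asserts that $z,w$ (as functions of $\lambda$) are algebraically independent over $\C(\lambda,f,g)$; the goal is to promote this into the required constraint on a block $\cW$.

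First I would observe that, by Lemma~\ref{lem:U-finite}, condition~(i) of Definition~\ref{def:blocks} never occurs, so the assumption that $\cB\cap\Sigma(V,\cW)\neq\emptyset$ supplies a point $p\in\cB$ and an analytic component $\Gamma$ of the germ $\cW_{\Phi(p)}$ (of dimension $\dim\cW$) contained in $\Phi(\hat\cL_0\cap V)$. Using the same finiteness I would lift $\Gamma$ to an analytic $\tilde\Gamma\subset\hat\cL_0\cap V$ of the same dimension. The natural projection $\pi:\hat\cL_0\cap V\to A_\lambda$ has fibers the affine $2$-planes $L_\lambda=\{U:(z(\lambda),w(\lambda))=(f(\lambda),g(\lambda))U\}$, and the proof splits on whether $\pi(\tilde\Gamma)$ is zero- or one-dimensional.

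If $\pi(\tilde\Gamma)=\{\lambda_0\}$ is a point, then $\tilde\Gamma\subset L_{\lambda_0}$, hence $\Gamma\subset L_{\lambda_0}$. Since $L_{\lambda_0}$ is an algebraic affine subspace of $\C^4$ and $\cW$ is irreducible algebraic with a local branch inside $L_{\lambda_0}$, the identity principle for irreducible varieties forces $\cW\subset L_{\lambda_0}$; as $\tilde\Gamma$ hits the fiber above $\lambda(p)\in\lambda(\cB)$, one may take $\lambda_0=\lambda(p)\in\lambda(\cB)$ and the lemma is proved.

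The main obstacle will be excluding the dominant case $\dim\pi(\tilde\Gamma)=1$, which is where the transcendence enters. Here I would parameterize an open piece of $\tilde\Gamma$ by $\lambda$ and write $\gamma(\lambda)\in\cW\cap L_\lambda$, so that $\gamma_1=(z-g\gamma_2)/f$ and $\gamma_3=(w-g\gamma_4)/f$. Since $\cW$ is irreducible algebraic, the coordinates $\gamma_i(\lambda)$ of this analytic piece satisfy the polynomial relations defining $\cW$; combining these with the two linear equations above and eliminating $\gamma_2,\gamma_4$ should produce a non-trivial algebraic identity placing $w$ in $\overline{\C(\lambda,f,g,z)}$, contradicting Masser--Zannier. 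The cleanest sub-case is $\dim\cW=1$, where $\C(\cW)$ has transcendence degree $1$ over $\C$, so $\gamma_1,\gamma_3,\gamma_4$ are algebraic over $\C(\gamma_2)$ and the inclusion $w\in\overline{\C(\lambda,f,g,z)}$ is immediate; $\dim\cW=3$ is also easy since it would force $\cW\supset L_\lambda$ for an open set of $\lambda$, hence $\cW=\C^4$, which is not a proper block; the intermediate case $\dim\cW=2$ requires the same elimination applied to the $1$-parameter family of $1$-dimensional slices $\cW\cap L_\lambda$ and is the most delicate step.
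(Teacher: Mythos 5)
Your overall route is the paper's: use Lemma~\ref{lem:U-finite} to kill condition (i) of Definition~\ref{def:blocks}, lift a positive-dimensional analytic component of the germ of $\cW$ into $\hat\cL_0\cap V$, split on whether $\lambda$ is constant on the lift, and in the non-constant case contradict the functional transcendence statement \cite[Lemma~5.1]{mz:torsion-annalen}. The constant-$\lambda$ case and the curve case $\dim\cW=1$ are correct and essentially identical to the paper's argument (modulo two routine points you should still address: if the coordinate $\gamma_2$ you privilege is constant on $\cW$ you must switch to another one, and the step from $z\in\overline{\C(\lambda,f,g,\gamma_2)}$ to $w\in\overline{\C(\lambda,f,g,z)}$ needs the degenerate subcase $z\in\overline{\C(\lambda,f,g)}$ handled separately).

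The genuine gap is the reduction of $\dim\cW\in\{2,3\}$ to the curve case. Your dismissal of $\dim\cW=3$ is a non sequitur: knowing that $\cW$ contains the $2$-planes $L_\lambda$ for a one-parameter family of $\lambda$ only shows $\cW$ is the closure of $\bigcup_\lambda L_\lambda$, which is $3$-dimensional — a hypersurface, not $\C^4$ — so no contradiction has been reached. And you explicitly leave $\dim\cW=2$ unfinished. The paper closes both cases with a single observation that replaces your case analysis: assuming $\cW$ is contained in no affine space of the form \eqref{eq:U-blocks}, $\lambda$ is non-constant on the lift, and one may replace $\cW$ by a \emph{generic linear section} of complementary dimension; this section is an irreducible algebraic curve whose germ still lies in $\Phi(\hat\cL_0\cap V)$ and on whose lift $\lambda$ remains non-constant. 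One then runs your $\dim\cW=1$ transcendence argument on that curve. I would insert this generic-section reduction in place of the separate treatments of $\dim\cW=2$ and $\dim\cW=3$.
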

\begin{proof}
  This is again just a reformulation of the functional transcendence
  results from \cite{mz:torsion-annalen}. Suppose $\cW$ is not
  contained in such an affine linear space. Then
  $\Phi(\hat\cL_0\cap V)$ contains one of the analytic components of
  (some germ of) $\cW$, and in particular $\lambda$ is non-constant on
  $\hat\cL_0\cap V$ (otherwise this germ would
  satisfy~\eqref{eq:U-blocks} for the constant value $\lambda_0$). We
  may also assume without loss of generality that $\cW$ is a curve by
  replacing it by its generic section ($\lambda$ remains non-constant
  for a generic section). Then~\eqref{eq:U-blocks} implies that
  $f(\lambda),g(\lambda)$ have transcendence degree at most $1$ over
  $z(\lambda),w(\lambda)$, contradicting
  \cite[Lemma~5.1]{mz:torsion-annalen}.
\end{proof}

We remark that Lemma~\ref{lem:U-blocks} implies, in particular, that
any block coming from the standard leaf can contain at most one real
point: it is a product of two affine-linear spaces with complex angle
$(f(\lambda_0):g(\lambda_0))$. By $G$-equivariance, the blocks coming
from other leafs are obtained as $G$-translates. For a sufficiently
nearby leaf, i.e. a $G$-translate sufficiently close to the origin,
the angle is still complex. All such nearby blocks therefore also
contain at most one real point. This will be crucial later in our
application of Theorem~\ref{thm:alg-count}.

\subsection{Finishing the proof}

We fix $\e=e^{-\poly(\delta_C)}$, to be suitably chosen later. Apply
Theorem~\ref{thm:alg-count} to the ball $\hat\cB$ with $V,\Phi$
constructed in~\secref{sec:counting-setup}. Recall that by
Lemma~\ref{lem:growth-estimates} the ball $\cB$ is contained in a ball
of radius $e^{\poly(\delta_C)}$ in $\M$. The same lemma also shows
that $\Im(f/g)\ge e^{-\poly(\delta_C)}$ uniformly on $\cB$. We choose
$\e$ small enough so that, by Lemma~\ref{lem:U-blocks}, any block
coming from a leaf of distance $\e$ to $\cB$ is still a product of
affine spaces with complex angle (and in particular contains at most
one real point). Setting $A:=\R^2\cap\Phi(\hat\cB^2\cap V)$ we have
\begin{equation}
  \# A(1,h) = \poly(\delta_C,\hat R,h).
\end{equation}

On the other hand we have the following.
\begin{Lem}
  For suitably chosen $\hat R = e^{\poly(\delta_C)}$ each Galois
  conjugate $c_\sigma$ in Lemma~\ref{lem:branch-choice} corresponds to
  a $\Q$-rational point of log-height $\poly(\delta_C,\log n)$ in $A$.
\end{Lem}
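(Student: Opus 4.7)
The plan is to associate to each Galois conjugate $c_\sigma$ from Lemma~\ref{lem:branch-choice} an explicit rational matrix obtained from the torsion relations, and to check that the corresponding point of $V$ lies in $\hat\cB^2$ and has the required height.

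First I would exploit the torsion hypothesis directly. Set $\lambda_\sigma:=\lambda(c_\sigma)$. Since $P(c_\sigma)$ and $Q(c_\sigma)$ both have torsion order dividing $n$, along the chosen branch of $\hat\cL_0$ the elliptic logarithms $z(\lambda_\sigma),w(\lambda_\sigma)$ satisfy $nz(\lambda_\sigma),nw(\lambda_\sigma)\in\Z f(\lambda_\sigma)+\Z g(\lambda_\sigma)$. Reducing modulo the period lattice, there are unique integers $0\le k_1,k_2,k_3,k_4<n$ with
\[
z(\lambda_\sigma)=\tfrac{k_1}{n}f(\lambda_\sigma)+\tfrac{k_2}{n}g(\lambda_\sigma),\qquad w(\lambda_\sigma)=\tfrac{k_3}{n}f(\lambda_\sigma)+\tfrac{k_4}{n}g(\lambda_\sigma),
\]
which I would package into a matrix $U_\sigma\in\Mat_{2\times 2}(\Q)$ with entries of the form $k/n$ and $0\le k<n$. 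The defining equation $(z,w)=(f,g)U$ of $V$ is then satisfied at
\[
p_\sigma := (c_\sigma, M_L(\lambda_\sigma), M_P(\lambda_\sigma), U_\sigma)\in V,
\]
so $\Phi(p_\sigma)=U_\sigma$ is $\Q$-rational with $h(U_\sigma)=O(\log n)$.

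Second I would verify $p_\sigma\in\hat\cB^2$. The ball $\cB\subset\M$ was constructed with its $\lambda$-projection equal to $D^{1/2}$; Lemma~\ref{lem:branch-choice} gives $\lambda_\sigma\in D=(D^{1/2})^2$, so the $\M$-projection of $p_\sigma$ lies in $\cB^2$. The transcendental coordinates $M_L(\lambda_\sigma),M_P(\lambda_\sigma)$ are bounded in norm by $e^{\poly(\delta_C)}$ thanks to Lemma~\ref{lem:growth-estimates}, and the entries of $U_\sigma$ lie in $[0,1)$. Choosing $\hat R=e^{\poly(\delta_C)}$ to dominate these bounds places $p_\sigma$ in $\hat\B_{\hat R}$, hence in $\hat\cB^2$.

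The main substantive work has already been done in the preceding steps: the careful choice of disc $D$ in Lemma~\ref{lem:branch-choice} is what permits landing in the shrunken ball $\cB^2$ rather than merely $\cB$, and the Fuchsian growth estimates of Appendix~\ref{appendix:fuchs-growth} feeding into Lemma~\ref{lem:growth-estimates} are what control the transcendental coordinates $M_L,M_P$ throughout the continuation domain. Given those, the present lemma reduces to the elementary fact that $n$-torsion elliptic logarithms are rational combinations of a chosen period basis with denominator exactly $n$; no further obstacle arises.
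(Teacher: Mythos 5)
There is a genuine gap in the step where you ``reduce modulo the period lattice'' to force the entries of $U_\sigma$ into $[0,1)$. The values $z(\lambda_\sigma),w(\lambda_\sigma)$ are not free modulo $\Z f+\Z g$: they are the specific branch values on the fixed piece of the leaf $\cL_0$ obtained by analytic continuation over $D^{1/2}$, and the set $A$ consists only of real matrices $U$ for which $(z(m),w(m))=(f(m),g(m))U$ holds at an actual point $m\in\cB^2$ with \emph{those} branch values. For a given $m_\sigma$ the real solution $U_\sigma$ of this system is unique (two real linear equations in two real unknowns per row, with nonzero determinant since $\Im(f/g)\neq0$), so replacing $U_\sigma$ by $U_\sigma-K$ for a nonzero integer matrix $K$ produces a matrix that is \emph{not} in $\Phi(\hat\cB^2\cap V)$ and hence not in $A$. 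Moreover the normalization cannot be absorbed into the choice of branch, since a single branch of $(z,w)$ is fixed over all of $D^{1/2}$ while different conjugates $\lambda_\sigma$ would require different lattice shifts.

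What is actually needed --- and what your argument omits --- is an upper bound $|u_{ij}|\le e^{\poly(\delta_C)}$ on the entries of the \emph{unnormalized} $U_\sigma$. This is where Lemma~\ref{lem:growth-estimates} enters in an essential way beyond bounding $M_L,M_P$: writing $z=fu_{11}+gu_{12}$ with $u_{11},u_{12}\in\R$ and taking real and imaginary parts gives a $2\times2$ real linear system whose determinant is controlled from below by the bound $\Im(f/g)\ge e^{-\poly(\delta_C)}$, while the data $z,f,g$ are bounded above by $e^{\poly(\delta_C)}$; Cramer's rule then yields the entry bound. Combined with the denominator bound $n$ coming from torsion (which you do obtain correctly), this gives numerators of size at most $n\,e^{\poly(\delta_C)}$ and hence log-height $\poly(\delta_C,\log n)$ --- note that your claimed $O(\log n)$ height is a symptom of the illegitimate normalization, since the lemma's statement deliberately allows the $\poly(\delta_C)$ contribution. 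The same entry bound is also what dictates the choice $\hat R=e^{\poly(\delta_C)}$ so that $U_\sigma$ lands in the $U$-polydisc; entries in $[0,1)$ would make that choice vacuous.
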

\begin{proof}
  Recall that $P(c),Q(c)$ are both torsion of order at most $n$, and
  the same is therefore true for each $c_\sigma$. In the equation
  \begin{equation}
    (z,w) = (f,g) U
  \end{equation}
  with real $U$ each $c_\sigma$ corresponds to a single value of $U$,
  with all coordinates rational and denominators not exceeding
  $n$. The claim will follows once we prove that the entries of $U$
  are bounded from above by $e^{\poly(\delta_C)}$. This follows from
  Lemma~\ref{lem:growth-estimates}. Indeed, we have for example
  \begin{equation}
    z = f u_{11} + g u_{12}
  \end{equation}
  which can be interpreted as a pair of $\R$-linear equations on
  $u_{11},u_{12}$ by taking real and imaginary parts. The determinant
  of this system is at least $e^{-\poly(\delta_C)}$ because $\Im(f/g)$
  is at least $e^{-\poly(\delta_C)}$, and the bounds on $U$ follow
  easily.
\end{proof}

In fact the proof of Theorem~\ref{thm:alg-count} gives a bound
$\poly(\delta_C,h)$ not only for $\#A(1,h)$ but for the number of
different points $\lambda\in D$ corresponding to points in $A$. A
reader having forgotten the proof of Theorem~\ref{thm:alg-count} may
instead appeal to Corollary~\ref{cor:zero-count-any-codim}, which
shows that the number of different values of $\lambda$ corresponding
to a single point of $A$ is at most $\poly(\delta_C,h)$. Indeed for
any fixed value $U=U_0$ in $A$ apply the corollary to the set
\begin{equation}
  \cB^2\cap V\cap\{U=U_0\},
\end{equation}
using Lemma~\ref{lem:U-finite} to see that $\Sigma$ is empty in this
case. It is in fact a simple exercise to remove the dependence on $h$
in this bound, but as we do not need this we leave it for the reader.

We are now ready to finish the proof. Recall that in
Lemma~\ref{lem:branch-choice} the number of points $c_\sigma$ is at
least $N(c)/\poly(\delta_C)$. Thus with $h=\poly(\delta_C,\log n)$ we
have
\begin{equation}
  N(c)/\poly(\delta_C) \le \#A(1,h) \le \poly(\delta_C,\log n)=\poly(\delta_C,\log N(c))
\end{equation}
where the last estimate is by~\eqref{eq:n-vs-Nc}. This immediately
implies $N(c)=\poly(\delta_C)$ as claimed.

\section{Proof of Theorem~\ref{thm:ao-decidable}}
\label{sec:effective-ao-proof}

\subsection{The foliation}

We follows Pila's proof \cite{pila:ao}, which employs the
uniformization of modular curves by the $j$-function $j:\Omega\to\C$
where $\Omega\subset\H$ denotes the standard fundamental domain for
the $\SL_2(\Z)$-action. To apply Theorem~\ref{thm:alg-count} we encode
this graph as a leaf of an algebraic foliation. This could be done by
replacing $j:\H\to\C$ by the $\lambda$-function $\lambda:\H\to\C$ and
expressing the inverse $\tau:\C\to\H$ as the ratio of two elliptic
integrals, which satisfy a Picard-Fuchs differential equation as
discussed in~\secref{sec:simul-tor-foliation}. For variation here we
employ an alternative approach, expressing $j$ directly as a solution
of a Schwarzian-type differential equation (which was employed for a
similar purpose in \cite{me:effective-ao}). 

Recall that the Schwarzian operator is defined by
\begin{equation}
  S(f) = \left(\frac{f''}{f'}\right)' - \frac12 \left(\frac{f''}{f'}\right)^2
\end{equation}
We introduce the differential operator
\begin{equation}
  \chi(f) = S(f) + R(f) (f')^2, \qquad R(f) = \frac{f^2-1968f+2654208}{2f^2(f-1728)^2}
\end{equation}
which is a third order algebraic differential operator vanishing on
Klein's $j$-invariant $j$ \cite[Page~20]{masser:heights}. As observed in
\cite{fs:minimality} it easy to check that the solutions of
$\chi(f)=0$ are exactly the functions of the form
$j_g(\tau):=j(g^{-1}\cdot\tau)$ where $g\in\PGL_2(\C)$ acts on $\C$ in the
standard manner.

The differential equation above may be written in the form
$f'''=A(f,f',f'')$ where $A$ is a rational function. More explicitly,
consider the ambient space $M:=\C\times\C^3\setminus\Sigma$ with
coordinates $(\tau,y,\dot y,\ddot y)$ where $\Sigma$ consists of the
zero loci of $y,y-1728$ and $\dot y$. In particular we will write
$\C_y:=\C\setminus\{0,1728\}$. On $M$ the vector field
\begin{equation}
  \xi := \pd{}\tau + \dot y \pd{}y + \ddot y\pd{}{\dot y}+A(y,\dot y,\ddot y)\pd{}{\ddot y}
\end{equation}
encodes the differential equation above, in the sense that any
trajectory is given by the graph of a function $j_g(\tau)$ and its
first two derivatives. 

We define our $n$-dimensional foliation $\cF$ on the ambient space
$\M:=M^n$ by taking an $n$-fold cartesian product of $M$ with its
one-dimensional foliation determined by the vector field $\xi$. We let
$\cL$ denote the standard leaf given by the product of the graphs of
the $j$ function, and note that any other leaf is obtained as a
product of graphs of
\begin{equation}\label{eq:j-graph-general}
  (j(g_1\tau_1),\ldots,j(g_n\tau_n)), \qquad\text{for } (g_1,\ldots,g_n)\in\GL_2(\C)^n.
\end{equation}
In fact one may easily check that $\cF$ is invariant under an
appropriate algebraic action of $\GL_2(\C)^n$, where the action is
trivial on $y$ and is computed by the chain rule on $\dot y,\ddot y$.

\subsection{Reduction to maximal special points}
\label{sec:weakly-special-reduct}

Denote by $V^\ws$ the \emph{weakly-special locus} of $V$, i.e. the
union of all weakly-special subvarieties of $V$. In
\cite[Theorem~4]{me:effective-ao} it is shown that one can effectively
compute $V^\ws$, and in particular
$\delta(V^\ws)=\poly_n(\delta_V)$. It is also shown that as a
consequence of this, one can reduce the problem of computing all
maximal special subvarieties to the problem of computing all special
points $p\in V_\alpha\setminus V_\alpha^\ws$, for some auxiliary
collection of varieties $V_\alpha\subset\C^{n_\alpha}$ with
$n_\alpha\le n$ and $\sum_\alpha \delta(V_\alpha)=\poly_n(\delta_V)$.

We remark that even though in loc. cit. only the bounds on the number
and degrees of these auxiliary subvarieties are explicitly stated, the
construction in fact yields an effective algorithm as can be observed
directly from the proof. We also note that the proof itself relies on
differential algebraic constructions, though of a very different
nature compared to the present paper. In conclusion, it will suffice
to prove Theorem~\ref{thm:ao-decidable} only for special points
outside $V^\ws$.

\subsection{A bound for maximal special points}

We will use Theorem~\ref{thm:alg-count} to count maximal special
points in $V$ as a function of the discriminant. Toward this end we
let $\hat V:=\pi_y^{-1}(V)\subset\M$ where $\pi_y:\M\to\C_y^n$ is the
projection to the coordinates $(y_1,\ldots,y_n)$. We let
$\Phi=(\tau_1,\ldots,\tau_n)$. Note that $\Phi$ restricts to the germ
of a finite map locally at every $\cL_p$.

The following corollary will allow us to control the blocks coming
from nearby leafs. We denote by $J:\H^n\to\C^n$ the $n$-fold product
of the $j$-function.

\begin{Prop}\label{prop:ws-blocks-control}
  Let $\cB$ be a $\xi$-ball in the standard leaf and $\cW$ a positive
  dimensional algebraic block coming from a nearby leaf at distance
  $\e$. Then
  \begin{equation}
    J(\cW\cap\pi_\tau(\cB)) \subset N_\delta(V^\ws), \qquad \delta=O_\cB(\e),
  \end{equation}
  where $N_\delta(V^\ws)$ denotes the $\delta$-neighborhood of $V^\ws$
  with respect to the Euclidean metric on $\C^n$.
\end{Prop}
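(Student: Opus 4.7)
The plan is to identify a block $\cW$ coming from a leaf at distance $\e$ with an $O_\cB(\e)$-perturbation of a weakly-special subvariety, via the Ax-Lindemann theorem of Pila-Tsimerman for $Y(1)^n$.

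First, I unpack Definition~\ref{def:blocks}. Since $\Phi=(\tau_1,\ldots,\tau_n)$ is a global coordinate on each leaf of $\cF$, condition (i) is automatic and only condition (ii) applies: there exist a point $p$ at distance at most $\e$ from $\cB$ and an analytic component $\cW'$ of the germ $\cW_{\Phi(p)}$ with $\cW'\subset\Phi(\cL_p\cap\hat V)$. Each leaf $\cL_p$ is parametrized by a tuple $g=(g_1,\ldots,g_n)\in\PGL_2(\C)^n$ via $\tau\mapsto(\tau_i,j(g_i^{-1}\tau_i),\ldots)_{i=1}^n$, and since $\cL_p$ passes $\e$-close to $\cB$ in the ambient coordinates of $\M$, the $y,\dot y,\ddot y$ coordinates of $\cL$ and $\cL_p$ over $\pi_\tau(\cB)$ differ by at most $\e$; on $\pi_\tau(\cB)$, compactly contained in $\H^n$, this forces $\|g-\id\|=O_\cB(\e)$. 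In these terms the germ inclusion becomes $J(g^{-1}\cW')\subset V$, where $J$ denotes the $n$-fold $j$-function.

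Next, I apply Ax-Lindemann for $Y(1)^n$ (Pila-Tsimerman). Let $W$ be a maximal irreducible complex-algebraic subset of $\H^n$ contained in $J^{-1}(V)$ and containing the germ $g^{-1}\cW'$; such $W$ exists because $g^{-1}\cW$ is algebraic and contains $g^{-1}\cW'$, and the inclusion $J(g^{-1}\cW')\subset V$ propagates to the relevant irreducible component by analytic continuation together with the algebraicity of $V$. By Ax-Lindemann, $W$ is contained in a weakly-special subvariety $Y\subset\H^n$ with $J(Y)\subset V$; since $J(Y)$ is then a weakly-special subvariety of $\C^n$ contained in $V$, we have $J(Y)\subset V^\ws$. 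Pulling back by $g$, the germ $\cW'$ lies in $gY$, and since $gY$ is algebraic (the image of an algebraic set under the algebraic M\"obius action of $g$) while $\cW'$ has the full dimension of the irreducible $\cW$, irreducibility forces $\cW\subset gY$.

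Finally, for any $\tau\in\cW\cap\pi_\tau(\cB)$ we have $g^{-1}\tau\in Y$, hence $J(g^{-1}\tau)\in J(Y)\subset V^\ws$. Since $\pi_\tau(\cB)$ is a compact subset of $\H^n$, the derivative of $J$ is uniformly bounded there, so $|J(\tau)-J(g^{-1}\tau)|=O_\cB(\|g-\id\|)=O_\cB(\e)$, giving $J(\tau)\in N_\delta(V^\ws)$ with $\delta=O_\cB(\e)$. The main obstacle is the quantitative correspondence between the closeness of leaves in $\M$ and the size of the group displacement $\|g-\id\|$ on $\pi_\tau(\cB)$; once this translation is in hand, the Ax-Lindemann input is applied essentially in its standard form as in Pila's proof of Andr\'e-Oort, and the rest is just the Lipschitz bound for $J$ on a compact subset of $\H^n$.
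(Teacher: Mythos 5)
Your proposal is correct and follows essentially the same route as the paper's proof: identify the nearby leaf as a $g$-translate of the standard leaf with $\norm{g-\id}=O_\cB(\e)$, invoke the modular Ax--Lindemann theorem to place $g^{-1}\cW$ inside a (pre-)weakly-special subvariety mapping into $V^\ws$, use irreducibility of $\cW$ to upgrade the germ inclusion to the whole variety, and finish with Lipschitz bounds for the $G$-action and for $J$ on the pre-compact set $\pi_\tau(\cB)$. The only differences are expository (you spell out the extraction of the germ inclusion from Definition~\ref{def:blocks} and the propagation step in slightly more detail).
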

\begin{proof}
  If $\cW$ comes from the standard leaf then the modular Ax-Lindemann
  theorem established in \cite{pila:ao} shows that $\cW$ is contained
  in a pre-weakly-special subvariety $\cW'$ with
  $\cW'\cap\H^n\subset J^{-1}(V)$. More accurately, some branch of a
  germ of $\cW$ is contained in $\cW'$, but since $\cW$ is irreducible
  in fact $\cW\subset\cW'$. Thus $J(\cW\cap\H^n)\subset V^\ws$ by
  definition.
  
  Recall that by~\eqref{eq:j-graph-general} all other leafs are
  obtained by a $g\in\GL_2(\C)^n$-translate of the standard leaf. A
  tubular neighborhood of $\cB$ of radius $\e$ is thus generated by
  translates with $\norm{\id-g}=O_\cB(\e)$. If $\cW$ comes from a leaf
  in this neighborhood then we have by the argument above
  \begin{equation}
    J(g^{-1}(\cW\cap\H^n))\subset V^\ws.
  \end{equation}
  To finish we should show that
  \begin{equation}
    J(\cW\cap\pi_\tau(\cB)) \subset N_{O_\cB(\e)}(J(g^{-1}(\cW\cap\H^n))).
  \end{equation}
  This follows at once because $\cB$ is pre-compact. First,
  $\cW\cap\pi_\tau(\cB)$ is contained in a neighborhood of
  $g^{-1}(\cW\cap\H^n)$ since the derivative of the $G$-action is
  bounded in $\pi_\tau(\cB)\subset\H^n$. And then
  $J(\cW\cap\pi_\tau(\cB))$ is contained in a neighborhood of
  $J(g^{-1}(\cW\cap\H^n))$ since the derivative of $J$ is bounded in
  $\pi_\tau(\cB)$.
\end{proof}

Let $p\in V\setminus V^\ws$ be a special point. We associate to $p$
the complexity measure
\begin{equation}
  \Delta(p) := \sum_{i=1}^n |\disc(p_i)|
\end{equation}
where $\disc p_i$ is the discriminant of the endomorphism ring of the
elliptic curve corresponding to $p_i$. The Chowla-Selberg formula
combined with standard estimates on $L$-functions implies
\begin{equation}\label{eq:cm-height-bound}
  h(p) = O_\e(\Delta(p)^{\e}), \qquad \text{for any }\e>0,
\end{equation}
see e.g. \cite[Lemma~4.1]{habegger:weakly-bounded} and the estimate
for the logarithmic derivative of the $L$-function in
\cite[Corollary~3.3]{tsimerman:ao}.

\begin{Lem}\label{lem:dist-p-Vws}
  For any $\e>0$ and special point $p\in V\setminus V^\ws$,
  \begin{equation}\label{eq:p-logdist}
    \log \dist^{-1} (p_\sigma,V^\ws) = \poly_n(\delta_V) O_\e(\Delta(p)^\e).
  \end{equation}
  holds for at least two thirds of the Galois conjugates $p_\sigma$ of
  $p$.
\end{Lem}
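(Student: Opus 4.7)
The plan is to deduce the stated bound from an appropriate lower bound on $|Q(p_\sigma)|$ for one of the defining equations $Q$ of $V^\ws$, via a standard Galois-orbit averaging argument. As in~\secref{sec:effective-mz-proof}, I would assume $\K=\Q$ to simplify notation; the general case reduces to this by replacing $V^\ws$ with its $\Gal(\bar\Q/\Q)$-closure $\tilde V^\ws\supseteq V^\ws$, noting that $\dist(p_\sigma,V^\ws)\ge\dist(p_\sigma,\tilde V^\ws)$, so it suffices to bound the latter.

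First I would invoke the effective computation of $V^\ws$ from \cite{me:effective-ao} recalled in~\secref{sec:weakly-special-reduct} to produce defining polynomials $Q_1,\ldots,Q_r\in\Q[y_1,\ldots,y_n]$ for $V^\ws$ of complexity $\delta(Q_\alpha)=\poly_n(\delta_V)$. Since $p\notin V^\ws$, fix $Q=Q_{i_0}$ with $\alpha:=Q(p)\ne0$. Routine height estimates together with the Chowla--Selberg bound~\eqref{eq:cm-height-bound} give
\[
  h(\alpha)\le\deg(Q)\,h(p)+h(Q)+O(\log\deg Q)=\poly_n(\delta_V)\cdot O_\e(\Delta(p)^\e).
\]
Since $Q$ has rational coefficients, the values $Q(p_\sigma)$ run through the full Galois orbit of $\alpha$. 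The product formula yields $\sum_\sigma\log|Q(p_\sigma)|\ge-[\Q(p):\Q]\cdot h(\alpha)$, and combined with the trivial upper bound on $\log^+|Q(p_\sigma)|$ a Markov inequality shows that at least $5/6$ of the Galois conjugates satisfy
\[
  |Q(p_\sigma)|\ge\exp\bigl(-\poly_n(\delta_V)\cdot O_\e(\Delta(p)^\e)\bigr).
\]

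A parallel averaging of $\log^+|p_{i,\sigma}|\le h(p)$ summed over the $n$ coordinates shows that at least $5/6$ of $\sigma$ lie in a ball $B_R$ with $\log R=\poly_n(\delta_V)\cdot O_\e(\Delta(p)^\e)$; a union bound leaves at least $2/3$ of $\sigma$ for which both conditions hold. For such $\sigma$, let $q\in V^\ws$ be a point realizing $\dist(p_\sigma,V^\ws)$. Then $Q(q)=0$, and the elementary gradient estimate along the segment from $p_\sigma$ to $q$ (contained in $B_{2R}$) gives
\[
  |Q(p_\sigma)|=|Q(p_\sigma)-Q(q)|\le\|\nabla Q\|_{B_{2R}}\cdot\dist(p_\sigma,V^\ws),
\]
with $\log\|\nabla Q\|_{B_{2R}}=\poly_n(\delta_V)\cdot\log R=\poly_n(\delta_V)\cdot O_\e(\Delta(p)^\e)$. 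Rearranging yields~\eqref{eq:p-logdist}.

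The main obstacle is essentially arithmetic bookkeeping: matching the Markov constants so that the union bound over the bounded-coordinate event and the bounded-value event leaves the stated $2/3$ fraction, and checking that the reduction from $\K\ne\Q$ to $\K=\Q$ via the $\Q$-closure $\tilde V^\ws$ does not degrade the exponent---which it does not, since $[\K:\Q]\le\delta_V$ is absorbed into the polynomial factor and $\tilde V^\ws$ is defined over $\Q$ with complexity $\poly_n(\delta_V)$.
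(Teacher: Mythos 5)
Your proof is correct and follows essentially the same route as the paper's: pick a polynomial of controlled height vanishing on $V^\ws$ but not at $p$, bound $h(P(p))$ via~\eqref{eq:cm-height-bound}, average over the Galois orbit to get a lower bound on $|P(p_\sigma)|$ for most conjugates, and convert this into a distance bound by the mean value theorem; you merely make explicit the Markov/union-bound bookkeeping and the rationality of the coefficients, which the paper leaves implicit. One small caveat: your reduction to $\K=\Q$ requires $p\notin\tilde V^\ws$ rather than just $p\notin V^\ws$ in order to find a $Q$ nonvanishing at $p$, and a priori $p$ could lie on a Galois conjugate of $V^\ws$ without lying on $V^\ws$; this is a patchable technicality (e.g.\ by averaging over $\K$-conjugates and absorbing the factor $[\K:\Q]\le\delta_V$ into the polynomial bound), and the paper glosses over the same point.
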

\begin{proof}
  This follows from $\delta(V^\ws)=\poly_n(\delta_W)$ and
  \eqref{eq:cm-height-bound}. For instance, choose a polynomial $P$
  with $h(P)=\poly_n(\delta_V)$ vanishing on $V^\ws$ but not on
  $p$. Then $h(P(p))=\poly_n(\delta_V,h(p))$ and in particular for
  two-thirds of the conjugates $p_\sigma$ we have
  \begin{align}
    -\log |p_\sigma|&=O_\e(\Delta(p)^\e) & -\log \abs{P(p_\sigma)} &= \poly_n(\delta_V,O_\e(\Delta(p)^\e)).
  \end{align}
  On the other hand, for these conjugates if
  $d_\sigma:=\dist (p_\sigma,V^\ws)$ then by the mean value theorem
  (assuming e.g. $d_\sigma<1$),
  \begin{equation}
    \abs{P(p_\sigma)} \le \d_\sigma\cdot \max_{B_{p_\sigma}(d_\sigma)} \norm{\d P} =e^{\poly_n(\delta_V,\Delta(p)^\e)} \cdot d_\sigma.
  \end{equation}
  Taking logs and comparing the last two estimates
  implies~\eqref{eq:p-logdist} on $d_\sigma$.
\end{proof}

Let $K\subset\Omega^n\subset\H^n$ be a compact subset of the
fundamental domain $\Omega^n$ with
\begin{equation}
  \vol(K) \ge \frac23 \vol(\Omega^n).
\end{equation}
According to Duke's equidistribution theorem
\cite{duke:equidistribution}, for $|\disc(p)|\gg1$ at least two-thirds
of the conjugates $p_\sigma$ correspond to points in $K$. Thus at
least one third of the conjugates $p_\sigma$ both lie in $K$ and
satisfy Lemma~\ref{lem:dist-p-Vws}. Call such conjugates \emph{good
  conjugates}.

\begin{Rem}
  Rather than appealing to equidistribution, it is also possible to
  use the height estimate~\eqref{eq:cm-height-bound} to deduce that a
  large portion of the orbit lies at log-distance at least $\Delta^\e$
  to the cusp. One can then use a logarithmic subdivision process to
  cover all such points by $\Delta^\e$-many $\vxi$-balls, similar
  to the approach we use in~\secref{sec:simul-torsion-domain}. We will
  employ such an approach in an upcoming paper (with Schmidt and
  Yafaev) on general Shimura varieties, where the analogous
  equidistribution statements are not known. 
\end{Rem}

According to Brauer-Siegel \cite{brauer-siegel} the number of good
conjugates is at least
\begin{equation}\label{eq:CM-deg-lower}
  \frac13 [\Q(p):\Q] \ge \Delta(p)^c \qquad \text{for some }c>0.
\end{equation}
We also recall from \cite{pila:ao} that for each $p_\sigma$, the
corresponding preimage $\tau_\sigma\in\Omega^n$ satisfies
\begin{align}
  [\Q(\tau_\sigma):\Q] &\le 2n & H(\tau_\sigma) &= \poly_n(\Delta(p)).
\end{align}
We are now ready to finish the proof. Cover the part of $\cL$
corresponding to $K$ by finitely many unit balls $\cB\subset\cL$ and
apply Theorem~\ref{thm:alg-count} with $\e_0$ to each of them. We
choose
\begin{equation}
  \log \e_0^{-1} = \poly_n(\delta_V) O_\e(\Delta(p)^\e)
\end{equation}
corresponding to the bound in Lemma~\ref{lem:dist-p-Vws}, so that for
any good conjugate $p_\sigma$ the $\e_0$-neighborhood of $p_\sigma$
does not meet $V^\ws$. Then according to
Corollary~\ref{prop:ws-blocks-control}, none of the positive
dimensional blocks $\cW_\alpha$ coming from nearby leafs at distance
$\e_0$ can contain the corresponding $\tau_\sigma$. Counting with $g=2n$ and
$e^h=\poly_n(\Delta(p))$ we see that each good conjugate must
come from a zero-dimensional $\cW_\alpha$, and the number of good
conjugates is therefore
$\poly_n(\delta_V,O_\e(\Delta(p)^\e))$. Choosing $\e$ sufficiently
small compared to $c$ and comparing this to~\eqref{eq:CM-deg-lower} we
conclude that $\Delta(p)<\poly_n(\delta_V)$.

\subsection{Computation of the maximal special points}

To compute the finite list of maximal special points
$p\in V\setminus V^\ws$ we start by enumerating all CM points
$p\in\C^n$ up to a given $\Delta=\delta_V$ (in polynomial time). For
example, they are all obtained as images under $\pi$ of points $\tau$
in $\H^n$, whose coordinates are each imaginary quadratic with height
$\poly_n(\Delta)$. It is simple to enumerate all such points, call
them $\{\tau_j\}$.

For each $\tau_j$ and each equation $P_k=0$ defining $V$, we should
check whether $P_k(\pi(\tau_j))$ vanishes. Since
$\delta_{\pi(\tau_j)}=\poly_n(\Delta)$ we have
\begin{equation}
  \delta(P_k(\pi(\tau_j))) = \poly_n(\Delta,\delta_V)=\poly_n(\delta_V)
\end{equation}
and by Liouville's inequality either $P_k(\pi(\tau_j))=0$ or
\begin{equation}
  -\log |P_k(\pi(\tau_j))| = \poly_n(\delta_V),
\end{equation}
so it is enough to compute $\poly_n(\delta_V)$ bits of
$P_k(\pi(\tau_j))$ to check whether it vanishes. This can be
accomplished, for instance by computing with the $q$-expansion of
$j(\cdot)$, and we leave the details for the reader.

\section{Proof of Degree bounds for torsion points}
\label{sec:degree-bounds-proof}

\subsection{Schmidt's strategy}

Our proof of Theorem~\ref{thm:torsion-deg} is based on an idea by
Schmidt \cite{schmidt:torsion-deg}, who noticed that a polylogarithmic
point-counting result such as the one obtained in
Theorem~\ref{thm:alg-count} would allow one to deduce degree bounds
for special points from suitable height bounds (in various
contexts). The idea (in the context of an abelian variety $A$) is to
count points on the graph of the universal cover $\pi:\C^g\to A$. If
$P$ is an $n$-torsion point on $A$ then one has a collection
$P,P^2,\ldots,P^n$ of torsion points. On the graph of $\pi$ these
correspond to pairs $(z_j,P^j)$ where: i) $h(P^j)$ is bounded (as
these are torsion points); ii) $h(z_j)=O(\log n)$ where we represent
$z_j$ as combinations of the periods; iii) $P^j$ all lie in the field
$\K(P)$. By point counting we therefore find
\begin{equation}
  n = \poly_A(\log n,[\K(P):\K])
\end{equation}
from which the Galois orbit lower bound follows.

Most applications of the Pila-Wilkie counting theorem use
point-counting to deduce an upper bound on the size of Galois orbits
of special points, contrasting them with lower bounds obtained by
other methods (usually transcendence techniques). Schmidt's idea shows
that polylogarithmic point counting results already carry enough
transcendence information to directly imply Galois orbit lower bounds,
giving ``purely point-counting'' proofs of unlikely intersection
statements (modulo the corresponding height bounds, which are of
course specific to the problem at hand). It is also to our knowledge
one of the first applications of point-counting that requires
polylogarithmic, rather than the classical sub-polynomial, estimates.

\begin{Rem}
  In fact for the method above to work, sub-polynomial dependence on
  the height $H:=e^h$ is sufficient. The crucial asymptotic is to
  obtain polynomial dependence on the degree $g$. However in the
  interpolation methods used to prove the Pila-Wilkie and related
  theorems, the dependence on $g$ and $h$ are of the same order.
  Imitating the proof of the classical Pila-Wilkie theorem would give
  only a sub-exponential $e^{\e g}$ bound, which is not sufficient.
\end{Rem}

\subsection{Further implications}
\label{sec:deg-bounds-future}

Though we consider here the simplest context of elliptic curves and
abelian varieties, Schmidt's idea can be made to work also in the
context of special points on Shimura varieties. In an upcoming paper
with Schmidt and Yafaev we prove that height bounds of the form
\begin{equation}\label{eq:special-height-bound}
  h(p) \ll \disc(p)^\e, \qquad \text{for any }\e>0,
\end{equation}
where $p$ is a special point in a Shimura variety and $\disc(p)$ is
the discriminant of the corresponding endomorphism ring, imply
Galois-orbit lower bounds
\begin{equation}
  [\Q(p):\Q] \ge \disc(p)^c \qquad \text{for some }c>0.
\end{equation}
In the case of the Siegel modular variety $\mathcal{A}_g$ the
bound~\eqref{eq:special-height-bound} follows from the recently
established averaged Colmez formula \cite{aghm:colmez,yz:colmez}, and
Tsimerman \cite{tsimerman:ao} has used these height bounds to
establish a corresponding Galois orbit lower bounds. For this
implication Tsimerman uses the Masser-Wustholz isogeny estimates
\cite{mw:isogeny-estimates}, another deep ingredient based on
transcendence methods. We obtain an alternative proof of Tsimerman's
theorem, avoiding the use of isogeny estimates and replacing them with
point-counting based on Theorem~\ref{thm:alg-count}. In particular our
proof applies also in the context of general Shimura varieties, where
it establishes the Andr\'e-Oort conjecture conditional on the height
bound~\eqref{eq:special-height-bound}. This seems to be of interest
because, to our knowledge, the corresponding isogeny estimates are not
known for general Shimura varieties, and it is therefore unclear
whether Tsimerman's approach could be used in this generality.

\subsection{Proof of Theorem~\ref{thm:torsion-deg}}

Write $E=E_\lambda$ in Legendre form and let
\begin{equation}
  h:=\max(h(\lambda),[\K:\Q]).
\end{equation}
It is known that that $\hFal(E)=\poly(h)$, so we prove the bound with
$h$ instead of the Faltings height. Let $\xi_E$ denote the
translation invariant vector field on $E$ given by
\begin{equation}
  \xi_E := [x(x-1)(x-\lambda)]'\partial_y+2y\partial_x.
\end{equation}
We will work in the ambient space $\M:=E_{x,y}\times\C_z$ where the
subscripts denote the coordinates used on each factor. We will
consider the foliation generated by the vector field
\begin{equation}
  \xi := \xi_E+\partial_z.
\end{equation}
Any leaf of $\cF$ is the graph of a covering map $\C\to E$, and as
usual this forms a principal $G$-bundle with $G=(\C,+)$ acting on
$\C_z$ by translation.

The main technical issue is to cover a large piece of a leaf by
$\poly(h)$-many $\xi$-balls with suitable control on the growth. For
this it is convenient to renormalize the time parametrization of
$\xi$. Recall that $x:E\to\P^1$ is ramified over the points
$\Sigma:=\{0,1,\lambda,\infty\}$. Fix some $\delta=e^{-\poly(h)}$ to
be chosen later, and denote by $\Lambda_\delta$ the complement of the
$\delta$-neighborhood of $\Sigma$. As
in~\secref{sec:simul-torsion-domain} we can choose a collection of
$\poly(h)$ discs $D_i$ such that
\begin{align}
  D_i^{1/2}&\subset\Lambda_{\delta/2}, & \Lambda_\delta&\subset\cup_i D_i.
\end{align}
We consider the reparametrized vector field $\xi':=\xi/2y$. The
$\xi'$-ball $\cB_i$ around the center of $D_i$ with the same radius
corresponds to $D_i$ in the $x$-variable and to one of the two
$y$-branches in the $y$-variable. The $z$-coordinate is obtained by
integrating $(1/2y)\d x$ over $D_i$, and since the integrand is
bounded by $e^{\poly(h)}$ we conclude the following.

\begin{Lem}\label{lem:cBi-radius}
  The $\xi'$-ball $\cB_i$ is contained in $\B_R$ for suitable
  $R=e^{\poly(h)}$.
\end{Lem}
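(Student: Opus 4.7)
The plan is to exploit the reparametrization $\xi' = \xi/(2y)$, which linearizes the $x$-coordinate along trajectories and thereby reduces the growth estimates to elementary bounds on the elliptic curve and a single integration for $z$.

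First, I would compute the components of $\xi'$ explicitly. Since $\xi = [x(x-1)(x-\lambda)]'\partial_y + 2y\partial_x + \partial_z$, dividing by $2y$ yields a vector field whose $\partial_x$-component is identically $1$. Consequently the $\xi'$-flow satisfies $x(t) = x_0 + t$ along trajectories, and so the projection to the $x$-coordinate of the $\xi'$-ball $\cB_i$ of radius $r_i = \operatorname{radius}(D_i)$ centered at a point with $x$-coordinate equal to the center of $D_i$ is exactly $D_i$ itself.

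Next I would bound $|y|$ on $\cB_i$. Because $D_i\subset D_i^{1/2}\subset\Lambda_{\delta/2}$, every point of $D_i$ satisfies $|x|\le 2/\delta$ and $|x|,|x-1|,|x-\lambda|\ge\delta/2$; since $\delta=e^{-\poly(h)}$ and $h(\lambda)\le h$, both the upper and the lower bounds are of the form $e^{\poly(h)}$ and $e^{-\poly(h)}$ respectively. The equation $y^2=x(x-1)(x-\lambda)$ then gives $|y|\in[e^{-\poly(h)},e^{\poly(h)}]$ on $\cB_i$; in particular $y$ does not vanish, confirming that $\xi'$ is regular on the ball and the flow is well-defined.

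Finally, for the $z$-coordinate I would use that $dz/dt = 1/(2y)$ along $\xi'$. Choosing the center of $\cB_i$ to have $z_0=0$, any point of $\cB_i$ is reached by flowing along a path in $\C$ of length at most $\diam(D_i) \le 2/\delta = e^{\poly(h)}$, and along this path $|1/(2y)|\le e^{\poly(h)}$ by the previous step. Integrating gives
\begin{equation}
|z| \le \diam(D_i)\cdot \sup|1/(2y)| \le e^{\poly(h)}.
\end{equation}
Combined with the bounds $|x|,|y|\le e^{\poly(h)}$, this places $\cB_i$ inside $\B_R$ with $R=e^{\poly(h)}$. There is no real obstacle here; the only subtlety is to observe that the reparametrization $\xi/(2y)$ is precisely what makes the $x$-coordinate a coordinate on the $\xi'$-ball, allowing the bound on $y$ and the single one-variable integration for $z$ to be carried out directly, uniformly over the $\poly(h)$-many balls.
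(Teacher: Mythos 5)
Your proof is correct and follows essentially the same route as the paper: the reparametrization $\xi'=\xi/(2y)$ makes $x$ the time coordinate so that $\cB_i$ projects onto $D_i$, the bounds $e^{-\poly(h)}\le|y|\le e^{\poly(h)}$ follow from $D_i^{1/2}\subset\Lambda_{\delta/2}$, and $z$ is bounded by integrating $(1/2y)\,\d x$ over $D_i$. The only (harmless) addition is your explicit normalization $z_0=0$ at the center, which the paper leaves implicit.
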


Now let $p\in E$ be an $n$-torsion point and denote
\begin{equation}
  N(p) := [\K(p):\K].
\end{equation}
Then the Neron-Tate height of $p$ vanishes, and by Zimmer
\cite{zimmer} it follows that the usual Weil height satisfies
$h(p)=\poly(h)$. By the same arguments used to prove
Lemma~\ref{lem:orbit-distribution}, at least half of the Galois
conjugates of $p$ over $\K$, which are also $n$-torsion, have an $x$
coordinate in $\Lambda_\delta$ with some suitable choice
$\delta=e^{-\poly(h)}$.

We can apply the same argument to the points $p^2,p^3,\ldots,p^n$,
which are also torsion of order at most $n$, and which crucially
satisfy $N(p^j)\le N(p)$ since the product law is defined over
$\K$. Concluding this discussion we have the following.

\begin{Lem}\label{lem:many-pis}
  There exist at least $n/2$ points $p_i\in E$ that are: i) torsion of
  order at most $n$; ii) have height $\poly(h)$; iii) satisfy
  $x(p_i)\in\Lambda_\delta$; iv) have $N(p_i)\le N(p)$.

  At least $n/\poly(h)$ of these points have $x$-coordinate belonging
  to a single disc $D_i$ and $y$-coordinate in a fixed branch over
  $D_i$.
\end{Lem}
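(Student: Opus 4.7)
The plan is to apply the height-plus-equidistribution argument behind Lemma~\ref{lem:orbit-distribution} \emph{uniformly} to each of the nontrivial powers $p, p^2, \ldots, p^{n-1}$, use a Fubini swap to select a single Galois element $\sigma_0$ that works for most of those powers simultaneously, and then pigeonhole twice (on the covering discs $\{D_i\}$ and on the two $y$-branches over each disc). The Fubini swap is the heart of the argument: it is precisely Schmidt's observation that decouples the counting parameter $n$ from the Galois-degree parameter $N(p)$.

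First I would verify the Galois-invariant properties (i), (ii), (iv) uniformly for every $p^j$ and every conjugate thereof. Property (i) is immediate. Property (iv) uses that the group law on $E_\lambda$ is defined over $\K$, so $p^j \in E(\K(p))$ and hence $N(\sigma(p^j)) = N(p^j) \le N(p)$. For (ii), N\'eron--Tate heights vanish on torsion and Zimmer's comparison \cite{zimmer} upgrades this to a Weil-height bound $h(p^j) = \poly(h)$ uniformly in $j$; consequently the heights of $x(p^j)$, $x(p^j)-1$ and $x(p^j)-\lambda$ are all $\poly(h)$, and the proof of Lemma~\ref{lem:orbit-distribution} applied to $x(p^j)$ yields a single $\delta = e^{-\poly(h)}$ for which, for every $j$, at least half of the $\Gal(\bar\K/\K)$-conjugates of $p^j$ have $x$-coordinate in $\Lambda_\delta$.

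Now comes the Fubini step. Enumerating conjugates via $\sigma \in \Gal(\K(p)/\K)$ (each geometric conjugate of $p^j$ hit with uniform multiplicity $N(p)/N(p^j)$), for each $j$ at least $N(p)/2$ elements $\sigma$ satisfy $x(\sigma(p^j)) \in \Lambda_\delta$. Summing gives $\ge (n-1)N(p)/2$ good pairs $(j,\sigma)$, and averaging over $\sigma$ produces a single $\sigma_0$ for which at least $(n-1)/2$ values of $j$ are good. Set $p_i := \sigma_0(p^{j_i})$ over those good indices: the $p_i$ are distinct (by injectivity of $\sigma_0$ and of $j\mapsto p^j$ on $\{1,\ldots,n-1\}$), each satisfies (i), (ii), (iv) by Galois invariance and (iii) by construction, proving the first assertion. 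The constant $1/2$ advertised in the lemma is recovered by tightening the density in Lemma~\ref{lem:orbit-distribution} to, say, $3/4$, which costs only a slightly smaller $\delta$ of the same form $e^{-\poly(h)}$.

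The second assertion is a straight double pigeonhole: the $\poly(h)$ discs $\{D_i\}$ cover $\Lambda_\delta$, so one disc contains $x(p_i)$ for at least $n/\poly(h)$ of the $p_i$, and one of the two $y$-branches over that disc retains at least half of those. No step is genuinely hard once the Fubini observation is in place; the remainder is careful bookkeeping of Galois orbits on top of the standard height machinery.
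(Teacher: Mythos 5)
Your proof is correct and follows the same route the paper takes (the paper derives this lemma directly from the preceding discussion of $h(p^j)=\poly(h)$, $N(p^j)\le N(p)$, and the Lemma~\ref{lem:orbit-distribution}-type orbit estimate applied to each multiple $p^j$). Your averaging/Fubini step selecting a single embedding $\sigma_0$ good for half the indices $j$ is exactly the right way to extract $n/2$ \emph{distinct} points from that discussion -- a detail the paper leaves implicit -- and your exclusion of $j=n$ (where $p^n=O$ has no affine $x$-coordinate) together with the remaining bookkeeping is all sound.
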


We will derive a contradiction to the assumption that $N(p)$ is small
by counting the points corresponding to $p_i$ on the leaf of our
foliation. Let $\tau_0\in\H$ be the element in the standard fundamental
domain corresponding to $E$, i.e. such that $E\simeq\C/\<1,\tau_0\>$. It
is known that $|\tau_0|=\poly(h)$, though even $|\tau_0|=e^{\poly(h)}$
would suffice for our purposes.

We consider the ambient space $\hat\M:=\M\times\C^2_u\times\C_\tau$
with the foliation $\hat\cF$ given by the product of $\cF$ with the
generator $\xi'$ on $\M$, the full-dimensional foliation on $\C^2_u$,
and the zero-dimensional foliation on $\C_\tau$. Consider the variety
$V\subset\hat\M$ given by
\begin{equation}
  V := \{ (x,y,z,u_1,u_2) : z=u_1+\tau u_2 \}
\end{equation}
and the map $\Phi:=(x,y,u_1,u_2)$. A leaf of $\hat\cF$ is given by
fixing a leaf of $\cF$ and a point $\tau\in\C_\tau$. Similar to
Lemma~\ref{lem:U-blocks} we have

\begin{Lem}\label{lem:U-blocks-torsion-deg}
  Let $\cW$ be a positive-dimensional algebraic block such that
  $\Sigma(V,\cW)$ meets some leaf $\hat\cL$. Then $u_1+\tau u_2$ is
  constant on $\cW$, where $\tau$ is the value taken on $\hat\cL$.
\end{Lem}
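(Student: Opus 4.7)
The plan is to mirror the strategy of Lemma~\ref{lem:U-blocks}, replacing the Masser--Zannier transcendence statement by the elementary transcendence of the Weierstrass $\wp$-function of $E$.

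First I would explicate the image $\Phi(V\cap\hat\cL)$ on a leaf with parameter $\tau$. Each leaf $\cL\subset\M$ of $\cF$ is the graph over $\C_z$ of a branch of the universal covering $z\mapsto(\wp(z),\tfrac12\wp'(z))$ of $E$ (in Legendre coordinates, up to a leaf-dependent translation in $z$ that plays no role below). The equation $z=u_1+\tau u_2$ defining $V$ then parametrizes $V\cap\hat\cL$ by $(u_1,u_2)\in\C^2$, giving
\begin{equation*}
  \Phi(V\cap\hat\cL) = \bigl\{\bigl(\wp(u_1+\tau u_2),\ \tfrac12\wp'(u_1+\tau u_2),\ u_1,\ u_2\bigr) : (u_1,u_2)\in\C^2\bigr\}\subset\C^4.
\end{equation*}
The projection of this 2-dimensional analytic set to the $(u_1,u_2)$-plane is biholomorphic, so $\Phi\rest{\hat\cL\cap V}$ is injective and in particular finite; thus condition (i) in Definition~\ref{def:blocks} is vacuous and only condition (ii) can be in force.

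Second, suppose for contradiction that $c:=u_1+\tau u_2$ is non-constant on $\cW$, and let $\cW_0$ be the analytic component of $\cW_{\Phi(p)}$ contained in $\Phi(\hat\cL\cap V)$. As in the corresponding step of Lemma~\ref{lem:U-blocks}, I may reduce to the case where $\cW$ is an irreducible algebraic curve by replacing it with a generic hyperplane section through a smooth point of $\cW$ lying in $\cW_0$; such a section still inherits a germ inside $\cW_0$, and $c$ remains non-constant on it since $\{c=\mathrm{const}\}\cap\cW$ has strictly smaller dimension than $\cW$. Now choose a smooth parametrization $s\mapsto\gamma(s)$ of a branch of the normalization of $\cW$ whose image lies in $\cW_0$. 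Then $x(s),y(s),u_1(s),u_2(s)$ are algebraic functions of $s$, and by construction $x(s)=\wp(c(s))$ with $c(s)=u_1(s)+\tau u_2(s)$ algebraic and non-constant. At a generic $s_0$ the map $s\mapsto c(s)$ is locally invertible by an algebraic function, so $\wp(c)=x(s(c))$ is an algebraic function of $c$ on an open subset of $\C$. This contradicts the transcendence of $\wp$, forcing $c$ to be constant on $\cW$.

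The only substantive step is the reduction to a curve: one must ensure the generic hyperplane section still contains a positive-dimensional germ of $\cW_0$. This is standard once one picks a smooth point of $\cW$ inside $\cW_0$ as the center of the section, and proceeds verbatim as in Lemma~\ref{lem:U-blocks}; the remainder of the argument is then strictly local and elementary.
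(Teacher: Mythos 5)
Your proposal is correct and follows essentially the same route as the paper: the paper's proof likewise argues that a block on which $u_1+\tau u_2$ is non-constant would force an algebraic relation between $(x,y)$ and $z=u_1+\tau u_2$ on a leaf, contradicting the transcendence of the abelian (Weierstrass) functions. Your version merely makes explicit two details the paper leaves implicit --- that condition (i) of Definition~\ref{def:blocks} is vacuous because $\Phi$ contains $(u_1,u_2)$ among its coordinates, and the reduction to a curve via a generic hyperplane section --- both of which are fine.
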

\begin{proof}
  Suppose not. Then $\cW$ would imply an algebraic relation between
  $(x,y)$ and $z=u_1+\tau u_2$ which would hold in a neighborhood of
  some point $(x,y,z)$ on a leaf $\cL$ of $\cF$. But we have seen that
  $(x,y)$ are abelian functions of $z$ (on any leaf), and are
  certainly not algebraic over $z$.
\end{proof}

Recall $R=e^{\poly(h)}$ is a constant to be chosen later.  Let
$\cB=\cB_i$ be the ball corresponding to the disc $D_i^{1/2}$ of
Lemma~\ref{lem:many-pis}. We consider the polydisc $\hat\cB$ given by
the product of $\cB$ in the $(x,y,z)$ coordinates, a polydisc of radius
$R$ in the $u_1,u_2$ coordinates, and the fixed $\tau=\tau_0$ in the
$\tau$ coordinate.  Note that $\Im\tau_0\ge1/\sqrt{2}$. Choosing $\e$
smaller than this number we deduce from
Lemma~\ref{lem:U-blocks-torsion-deg} that any block coming from a leaf
of distance $\e$ to $\hat B$ is contained in an affine line with a
complex angle in $(u_1,u_2)$ an in particular contains at most one
real point. Apply Theorem~\ref{thm:alg-count}. Then setting
$A:=\R^2\cap\Phi(\hat\cB^2\cap V)$ we have
\begin{equation}
  \# A(g,t) = \poly(h,g,t).
\end{equation}
On the other hand, we have the following.

\begin{Lem}
  Each of the points $p_i$ of Lemma~\ref{lem:many-pis} corresponds to
  a point of log-height $t=\poly(h,\log n)$ and degree at most
  $g=[\K:\Q]\cdot N(p)$ in $A$.
\end{Lem}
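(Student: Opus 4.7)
The plan is to exhibit, for each $p_i$ of Lemma~\ref{lem:many-pis}, a concrete point in $\hat\cB^2\cap V$ whose image under $\Phi=(x,y,u_1,u_2)$ lies in $A$ with the claimed bounds. Fix the leaf $\hat\cL$ of $\hat\cF$ through the chosen base point (with $\tau=\tau_0$), and let the $\xi'$-flow along the $\cF$-component of this leaf realize the universal cover $\C_z\to E$. The first step is to lift each $p_i$ to its unique representative $z_i\in\cB_i^2\subset\C_z$; since the $p_i$ lie in $\cB_i^2$ by Lemma~\ref{lem:many-pis} (after a harmless reshrinking of the discs $D_i$), Lemma~\ref{lem:cBi-radius} gives $|z_i|\le e^{\poly(h)}$.

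Next, because $\Im\tau_0\ge 1/\sqrt 2$ the equation $z_i=u_1+\tau_0 u_2$ has a unique real solution, producing a point of $\hat\cB^2\cap V$ above $p_i$ (it lies in $\hat\cB^2$ provided the radius $R$ in the $(u_1,u_2)$-coordinates is chosen as $R=e^{\poly(h)}$, which we are free to do). Under the identification $E\simeq\C/\langle 1,\tau_0\rangle$ built into the foliation, the torsion condition $n\cdot p_i=0$ forces $z_i\in n^{-1}(\Z+\tau_0\Z)\mod(\Z+\tau_0\Z)$, so $u_1,u_2\in n^{-1}\Z$. The bounds $|z_i|\le e^{\poly(h)}$ and $\Im\tau_0\ge 1/\sqrt 2$ give $|u_j|\le e^{\poly(h)}$, so together with the denominator bound $h(u_j)\le \log n+\poly(h)=\poly(h,\log n)$. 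For the algebraic coordinates, Zimmer's theorem applied to the torsion $p_i$ (as already used above for $p$ itself) gives $h(x(p_i)),h(y(p_i))=\poly(h)$.

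For the degree, since the group law on $E$ is defined over $\K$ and each $p_i$ is a multiple of $p$, we have $x(p_i),y(p_i)\in\K(p)$, a number field of degree $[\K:\Q]\cdot N(p)=g$ over $\Q$. Combined with $u_1,u_2\in\Q$, the point $\Phi(p_i)=(x(p_i),y(p_i),u_1,u_2)$ has all coordinates in $\K(p)$, hence degree at most $g$. Taking the maximum of the coordinate log-heights yields $\Phi(p_i)\in A(g,t)$ with $t=\poly(h,\log n)$.

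The main subtlety, and the only place where care is needed, is the claim that $u_1,u_2$ are rational with denominator dividing $n$. The naive foliation parameter $z=\int dx/(2y)$ has periods $\omega_1,\omega_2$ with $\omega_2/\omega_1=\tau_0$ rather than periods equal to $1,\tau_0$, so the rationality of $(u_1,u_2)$ either requires an implicit rescaling $z\mapsto z/\omega_1$ along the leaf (equivalently, interpreting $(u_1,u_2)$ as the rational coefficients of the torsion lift in the period basis $(\omega_1,\omega_2)$ after setting $\tau=\omega_2/\omega_1$), or requires including the periods among the ambient coordinates in the style of~\secref{sec:simul-tor-foliation}. Once this normalization is in place the argument above is routine; this is the only nontrivial step, as the height and degree estimates then reduce to Zimmer's bound and the rationality of $(u_1,u_2)$.
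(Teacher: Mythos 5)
Your proof is correct and follows essentially the same route as the paper: the heights of $x(p_i),y(p_i)$ come from Lemma~\ref{lem:many-pis}, the rationality of $(u_1,u_2)$ with denominator dividing $n$ comes from the torsion condition together with the lattice normalization $E\simeq\C/\langle 1,\tau_0\rangle$, and the numerator bounds come from Lemma~\ref{lem:cBi-radius} combined with $\Im\tau_0\ge 1/\sqrt2$, with the degree bound following since all coordinates lie in $\K(p)$. The normalization issue you flag at the end (rescaling $z$ by a period so that the lattice becomes $\langle 1,\tau_0\rangle$) is indeed left implicit in the paper, which simply asserts that $1,\tau_0$ generate the lattice; your explicit acknowledgment of it is a correct filling-in of that detail rather than a departure from the argument.
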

\begin{proof}
  For the $x,y$ coordinates this is the content of
  Lemma~\ref{lem:many-pis}. For the $u_1,u_2$ coordinates, they are
  rational with denominators at most $n$ since $p_i$ is torsion, $z$
  is a lifting of $p_i$ to $\C$, and $1,\tau_0$ generate the lattice
  of $E$. The numerators are also bounded by $e^{\poly(h)}$: for $z$
  this bound is given in Lemma~\ref{lem:cBi-radius}, and the same
  bound for $u_1,u_2\in\R$ follows since $z=u_1+\tau_0 u_2$ and
  $\Im\tau_0\ge1/\sqrt2$. Thus choosing a suitable $R=e^{\poly(h)}$ we
  see that $u_1,u_2$ are indeed rational of log-height
  $\poly(\log n,h)$ and in the polydisc of radius $R$.
\end{proof}
Finally, we have
\begin{equation}
  n/\poly(h) \le \#A(N(p)\cdot[\K:\Q],\poly(h,\log n)) = \poly(h,N(p),\log n)
\end{equation}
and it follows that $n=\poly(h,N(p))$ as claimed.

\subsection{Abelian varieties of arbitrary genus}
\label{sec:bound-any-g}

There is no difficulty in extending the proof above to show that if
$A$ is an abelian variety of genus $g$ over $\K$ and $p\in A$ is
torsion of order $n$ then $n \le \poly_A([\K(p):\K])$. The more
technically challenging part is to establish the precise dependence on
$A$, namely
\begin{equation}
  n = \poly_g([\K:\Q],[\K(p):\K],\hFal(A)).
\end{equation}
We briefly sketch how the argument presented above in the elliptic
case can be extended to arbitrary genus assuming that $A$ is
principally polarized.

An explicit embedding of $A$ in projective space can be computed in
terms of theta function, $\Theta:A\to\P^N$. The \emph{theta height} of
$A$ is defined by $h:=h_\Theta(A)=h(\Theta(0))$. By
\cite[Corollary~1.3]{pazuki:theta} the Faltings height is roughly the
same as the theta height, and we can use this as a replacement of
$h(\lambda)$ used in the elliptic case. By
e.g. \cite[Lemma~3.1]{mw:periods-minimal} the image $\Theta(A)$ is
defined by a collection of quadratic equations whose coefficients are
functions of $\Theta(0)$, so as in the elliptic case we have
\begin{equation}
  h(\Phi(A))=\poly_g(h).
\end{equation}
The translation-invariant vector fields $\vxi:=(\xi_1,\ldots,\xi_g)$
used to construct the foliation can also be explicitly expressed in
terms of $h(\Theta(0))$ \cite[Lemma~3.7]{mw:periods-minimal}, and in
particular $\delta_\vxi=\poly_g(h)$.

The main technical issue is the covering of $A$ by $\poly_g(h)$-many
$\vxi$-balls. (Here if one is content with a general bound depending
on $A$ rather than polynomial in $h$, then compactness can be
used). In the elliptic case we achieved this by explicitly
constructing a covering by balls in the $x$-coordinate. In arbitrary
dimension one obviously needs a more systematic approach. For
instance, the results of \cite{me:complex-cells} show that $\Theta(A)$
can be covered by $\const(g)$ charts whose domains are \emph{complex
  cells}. When $\Theta(A)$ is further assumed to be of height $h$ one
can in fact replace these general cells by $\poly_g(h)$ polydiscs
(this is a work in progress with Novikov and Zack). Having obtained
such a collection of polydiscs replacing our discs $D_i$ in the
elliptic case, one can proceed with the proof without major changes.

\appendix

\section{Growth estimates for inhomogeneous Fuchsian equations}
\label{appendix:fuchs-growth}

\subsection{Gronwall for higher-order linear ODEs}

Let $D\subset\C$ be a disc and consider a linear differential operator
\begin{equation}
  L = a_0(t) \partial_t^n+a_1(t)\partial_t^{n-1}+\cdots+a_n(t)
\end{equation}
where $a_0,\ldots,a_n$ are holomorphic in $\bar D$. Let $b(t)$ also be
holomorphic in $\bar D$. We will consider the growth of solutions for
the inhomogeneous equation
\begin{equation}\label{eq:inhomogeneous-eq}
  Lf=b(t).
\end{equation}
We denote
\begin{align}
  j_t^nf&:=(f,\partial_tf,\ldots,\partial_t^nf)^T & v_b&:=(0,\ldots,0,b(t))^T.
\end{align}
The following is a form of the Gronwall inequality for monic linear
operators.

\begin{Lem}\label{lem:scalar-gronwall}
  Suppose that $a_0\equiv1$ and denote
  \begin{align}
    A &= \max_{j=1,\ldots,n}\max_{t\in\bar D} |a_j(t)|, & B &= \max_{t\in\bar D} |b(t)|.
  \end{align}
  Then for every $t\in D$,
  \begin{equation}
    \norm{j_t^nf(t)} \le e^{O_n(A)} (O_n(B)+\norm{j_t^nf(0)}).
  \end{equation}
\end{Lem}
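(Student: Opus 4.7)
The plan is to reduce the scalar equation of order $n$ to a first-order linear vector ODE and then apply the complex-analytic Gronwall inequality. Set $Y(t) := j_t^{n-1}f(t) = (f,\partial_t f,\ldots,\partial_t^{n-1}f)^T\in\C^n$. Since $a_0\equiv 1$, the equation $Lf=b$ is equivalent to the vector ODE
\begin{equation}
\partial_t Y = M(t) Y + v_b(t),\qquad
M(t) = \begin{pmatrix} 0 & 1 & & \\ & \ddots & \ddots & \\ & & 0 & 1 \\ -a_n & -a_{n-1} & \cdots & -a_1\end{pmatrix},
\end{equation}
where the companion matrix $M(t)$ has entries bounded in absolute value by $\max(1,A)$, so $\norm{M(t)}\le O_n(1+A)$ on $\bar D$, and $\norm{v_b(t)}\le B$ on $\bar D$.

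Next I would pick any rectifiable path $\gamma:[0,1]\to D$ from $0$ to $t$ (e.g.\ the straight segment, which lies in $D$ since $D$ is a disc); its length $L=L(\gamma)$ is bounded by the diameter of $D$, a quantity I will absorb into the $O_n$ constant (the lemma is normalized up to the size of $D$, which only enters multiplicatively through the path length). Along $\gamma$, setting $u(s):=\norm{Y(\gamma(s))}$ and using that $Y$ is holomorphic on $D$, we have
\begin{equation}
u(s)\le u(0) + \int_0^s |\gamma'(\sigma)|\,\bigl(\norm{M(\gamma(\sigma))}\cdot u(\sigma) + \norm{v_b(\gamma(\sigma))}\bigr)\,d\sigma.
\end{equation}
Applying the standard (real-variable) Gronwall inequality to this integral inequality with the bounds above yields
\begin{equation}
\norm{Y(t)}\le \bigl(\norm{Y(0)} + O_n(B)\bigr)\cdot e^{O_n(1+A)} = e^{O_n(A)}\bigl(O_n(B)+\norm{Y(0)}\bigr),
\end{equation}
where we freely absorbed $e^{O_n(1)}$ into the implicit constants.

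Finally, to upgrade the bound from $j_t^{n-1}f$ to $j_t^n f$, I use the ODE itself: $\partial_t^n f = b - \sum_{j=1}^{n} a_j\,\partial_t^{n-j}f$, hence
\begin{equation}
|\partial_t^n f(t)|\le B + A\sum_{j=1}^n |\partial_t^{n-j}f(t)|\le B + nA\,\norm{Y(t)},
\end{equation}
which combines with the estimate on $\norm{Y(t)}$ to give the claimed bound on $\norm{j_t^n f(t)}$. The only thing to be slightly careful about is the complex-variable form of Gronwall's lemma (since we integrate along a path in $\C$, not on $\R$), but once we parametrize the path this is immediate from the real-variable version; this is the one mildly non-routine step, while the rest is bookkeeping.
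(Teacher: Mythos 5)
Your proposal is correct and follows essentially the same route as the paper: reduce $Lf=b$ to a first-order companion system and apply Gronwall's inequality along a path in $D$. If anything your version is slightly more careful than the paper's (explicit path parametrization for the complex-variable Gronwall step, and separate recovery of $\partial_t^n f$ from the equation itself), but the underlying argument is identical.
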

\begin{proof}
  Rewrite $Lf=b$ as a linear system for the vector $j_n^tf$ as follows
  \begin{multline}
    \partial_t j_t^nf(t) =
    \begin{pmatrix}
      0 & 1 & 0 & \cdots &  0 \\
      0 & 0 & 1 & \cdots &  0  \\
      & & \vdots & & \\
      -a_n(t) & -a_{n-1}(t) & \cdots & -a_2(t) & -a_1(t)
    \end{pmatrix}
    j_t^nf(t) + v_b(t) =\\
    \Omega(t)j_t^nf(t)+v_b(t)
  \end{multline}
  Then for $t\in D$ the solution $j_t^nf$ satisfies
  \begin{equation}
    \partial_t \norm{j_t^nf(t)} \le \norm{\Omega}\cdot\norm{j_t^nf(t)}+O_n(B) = O_n(A)\norm{j^nf_t(t)}+O_n(B)
  \end{equation}
  and the conclusion follows by the classical Gronwall's inequality.
\end{proof}

Lemma~\ref{lem:scalar-gronwall} allows one to prove growth estimates
for general equations $Lf=b$ non-singular in a disc $D$ by first
dividing by the leading term. However, due to the exponential
dependence on $A$, the resulting bound will grow exponentially as a
function of the minimum of the leading term. For arbitrary singular
linear ODEs this is the best one can expect.

For Fuchsian operators, which are the operators that come up in the
study of periods and logarithms, one can obtain much sharper
estimates with polynomial growth near the singularities. We do this
in the following section.

\subsection{Inhomogeneous Fuchsian equations}
  
In this section we assume that the coefficients of $L$ are in
$\C[t]$. Recall that $L$ is called \emph{Fuchsian} is each singular
point $t_0\in\P^1$ of $L$ is Fuchsian. This means that in a local
coordinate $z$ where the $t_0$ is the origin, $L$ can be written in
the form
\begin{equation}
  L = \tilde a_0(z)(z\partial_z)^n+\tilde a_1(z)(z\partial_z)^{n-1}+\cdots+\tilde a_n(z)
\end{equation}
where the coefficients $\tilde a_j$ are holomorphic at the origin, and
$\tilde a_0(0)\neq0$. We denote by $\Sigma\subset\P^1$ the set of
singular points of $L$.

We recall the notion of \emph{slope} for a differential operator over
$\C(t)$ introduced in \cite{me:inf16}. For a polynomial $p$ we define
$\norm{p}$ to be the $\ell_1$-norm on the coefficients. We extends
this to rational functions by setting $\norm{p/q}=\norm{p}/\norm{q}$
where the fraction $p/q$ is reduced.

\begin{Def}[Slope of a differential operator]
  The \emph{slope} of $\angle L$ of $L$ is defined by
  \begin{equation}
    \angle L := \max_{i=1,\ldots,n} \frac{\norm{a_j(t)}}{\norm{a_0(t)}}.
  \end{equation}
  The \emph{invariant slope} $\slope L$ is defined by
  \begin{equation}
    \slope L := \sup_{\phi\in\Aut(\P^1)} \angle (\phi^*L)
  \end{equation}
  where $\phi^*L$ denote the pullback of $L$ by $\phi$.
\end{Def}

We remark that in \cite{me:inf16} the slope was defined by first
normalizing the coefficients $a_j$ to be polynomials, but this minor
technical difference does not affect what follows. It is a general
fact that the invariant slope is finite for Fuchsian operators
\cite[Proposition~32]{me:inf16}. The following gives effective
estimates when $L$ is defined over a number field $\K$. In this case
we denote $\delta_L:=\sum_j\delta_{a_j}$.

\begin{Prop}\label{prop:slope-bound}
  Suppose $L$ is defined over a number field. Then
  $\slope L=e^{\poly_n(\delta_L)}$.  
\end{Prop}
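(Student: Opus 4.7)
The plan is to leverage the structural rigidity of Fuchsian operators to obtain a $\phi$-independent bound on the slope, giving an effective version of the non-effective \cite[Proposition~32]{me:inf16}. First I would extract the singular data of $L$: the finite singular locus $\Sigma_{\mathrm{fin}}$ is contained in the zero set of $a_0$, and standard height-theoretic estimates show that the positions of the $t_i \in \Sigma_{\mathrm{fin}}$ and their multiplicities $e_i := \ord_{t_i} a_0$ have heights and degrees bounded by $\poly_n(\delta_L)$. The Fuchsian condition at each $t_i$ forces $(t-t_i)^{\max(0, e_i - j)}$ to divide $a_j(t)$, together with analogous degree bounds at infinity; this yields a canonical factorization $a_j(t) = P_j(t) \cdot c_j(t)$ where $P_j := \prod_i (t-t_i)^{\max(0, e_i - j)}$ and the cofactors $c_j$ are polynomials of degree and height $\poly_n(\delta_L)$.

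Second, I would compute $\phi^* L$ explicitly for $\phi(s) = (as+b)/(cs+d)$ with $ad - bc = 1$. Iterating $\partial_t = (cs+d)^2 \partial_s$ and applying the Leibniz rule shows that $\partial_t^k$ has the form $(cs+d)^{2k} \partial_s^k$ plus lower-order corrections in $\partial_s$ with coefficients involving higher powers of $(cs+d)$ and constants depending only on $c$. After multiplying through by a uniform power of $(cs+d)$ and clearing denominators in the $a_j \circ \phi$, the pulled-back operator has polynomial coefficients $\tilde a_j(s)$ whose factorization inherits the Fuchsian structure: the factor $(t-t_i)^{e_i - j}$ in $a_j$ becomes $(as+b - t_i(cs+d))^{e_i - j}$ in $\tilde a_j$. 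The slope $\angle \phi^* L = \max_j \|\tilde a_j\|/\|\tilde a_0\|$ is then the quantity to estimate.

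The hardest step, and the main technical obstacle, is to verify that $\|\tilde a_j\|/\|\tilde a_0\|$ admits a bound $e^{\poly_n(\delta_L)}$ uniformly in $(a,b,c,d)$. The Fuchsian factors $\prod_i (as+b - t_i(cs+d))^{e_i}$ appear with matching total degree in both numerator and denominator (accounting for the $(cs+d)^{2n}$ contributions from the iterated derivatives), and hence cancel in the ratio up to contributions of bounded degree in $s$. The residual ratio involves only the pulled-back cofactors $c_j \circ \phi$ and the Jacobian correction terms; an $\ell_1$-norm estimate of the form $\|P((as+b)/(cs+d)) \cdot (cs+d)^{\deg P}\| \le \poly(\deg P) \cdot \|P\| \cdot (|a|+|b|+|c|+|d|)^{\deg P}$ applied to the $c_j$, combined with the normalization $ad - bc = 1$ (which bounds $|a|+|b|+|c|+|d|$ from below in terms of the largest of these), shows that the $\phi$-dependent factors cancel between numerator and denominator up to $e^{\poly_n(\delta_L)}$. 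The delicate part is the precise bookkeeping of these cancellations and of the Leibniz correction terms produced when iterating $(cs+d)^2\partial_s$, so as to confirm that no residual $\phi$-dependence survives in the ratio.
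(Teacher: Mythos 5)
Your proposal takes a completely different route from the paper, and the route you chose has a gap at its critical step. The paper's proof is a two-line soft argument: for fixed $L$ over a number field, $\angle(\phi^*L)$ is a semialgebraic function of the coefficients of $\phi$, so $\slope L$ is the supremum of a semialgebraic family defined by data of degree and height $\poly_n(\delta_L)$; since this supremum is known to be finite (\cite[Proposition~32]{me:inf16}), effective real algebraic geometry \cite{basu:bounds} automatically upgrades ``finite'' to ``$\le e^{\poly_n(\delta_L)}$''. No explicit computation of the pullback is needed; the qualitative finiteness theorem does all the work.

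Your direct approach would, if completed, give a self-contained and potentially more explicit bound (it would in effect reprove the finiteness of $\slope L$ effectively), but as written it asserts rather than proves the one point that carries all the content. The claim that ``no residual $\phi$-dependence survives in the ratio'' \emph{is} the finiteness of the invariant slope; it is exactly what fails for non-Fuchsian operators, so it cannot follow from degree bookkeeping alone. Concretely, the problem is the degenerate limits of $\phi\in\PGL_2(\C)$: writing $\ell_i(s)=(a-t_ic)s+(b-t_id)$ for the pullback of $t-t_i$, nothing prevents $\norm{\ell_i}$ from being arbitrarily small relative to $\norm{cs+d}$ (this happens when $\phi$ concentrates the coordinate near the singular point $t_i$), and in that regime the numerator and denominator of $\norm{\tilde a_j}/\norm{\tilde a_0}$ contain \emph{different} powers of $\ell_i$ (namely $\max(0,e_i-j)$ versus $e_i$), so the factors do not cancel; one must use the Fuchsian inequality $\ord_{t_i}a_j\ge e_i-j$ together with the Leibniz corrections from $[(cs+d)^2\partial_s]^{n-j}$ (which redistribute contributions of $a_j$ into all $\tilde a_k$ with $k\ge j$) to see that the deficit in $\ell_i$-powers is exactly compensated. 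Two further points need care: $\ell_1$-norms are not multiplicative, so ``cancellation of factors'' in a ratio of norms requires Gelfond--Mahler type inequalities $\norm{P}\,\norm{Q}\le e^{O(\deg PQ)}\norm{PQ}$ (harmless here since all degrees are $\poly_n(\delta_L)$, but it must be invoked); and the paper's $\norm{\cdot}$ on rational functions is defined via the \emph{reduced} fraction, so common factors between $\tilde a_j$ and $\tilde a_0$ must actually be cancelled before taking norms. Until the degenerate-$\phi$ analysis is carried out, the proof is incomplete precisely where the Fuchsian hypothesis enters.
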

\begin{proof}
  Since $\slope L$ is defined by a semialgebraic formula over a number
  field and is known to be finite, an effective bound follows from
  general effective semialgebraic geometry \cite{basu:bounds}, see the
  derivation in \cite[Section~3.6.2]{me:inf16}.
\end{proof} 

The slope $\slope L$ is useful for the study of oscillation of
solutions of homogeneous Fuchsian equations $Lf=0$, and is similarly
useful for the study of growth. In the inhomogeneous case we also
require the following corollary concerning the leading coefficient.
We denote by $a_j(L)$ the $j$-th coefficient of $L$.

\begin{Prop}\label{prop:leading-term-bound}
  Suppose $L$ is defined over a number field. Then
  \begin{equation}
    \inf_{\phi\in\Aut(\P^1)} \norm{a_0(\phi^*L)} = e^{-\poly_n(\delta_L)}.
  \end{equation}
\end{Prop}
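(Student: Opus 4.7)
My plan is to deduce Proposition~\ref{prop:leading-term-bound} directly from Proposition~\ref{prop:slope-bound}; the two statements are essentially equivalent reformulations, and the serious technical work has already been done for the slope bound.

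The key observation is that unwinding the definition of slope gives, for every $\phi \in \Aut(\P^1)$,
\begin{equation*}
\slope L \;\ge\; \angle(\phi^*L) \;=\; \max_{j=1,\ldots,n}\frac{\norm{a_j(\phi^*L)}}{\norm{a_0(\phi^*L)}},
\end{equation*}
which rearranges to $\norm{a_0(\phi^*L)} \ge \max_{j\ge 1}\norm{a_j(\phi^*L)}/\slope L$. Since a differential operator is determined only up to a nonzero scalar multiple, I would normalize each $\phi^*L$ (for instance by clearing denominators to obtain coprime polynomial coefficients and then rescaling) so that $\max_{j\ge 1}\norm{a_j(\phi^*L)} = 1$. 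This is possible because $L$ has order $n$, so the coefficients $a_1,\ldots,a_n$ cannot all vanish identically after pullback by any Möbius $\phi$.

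Under this normalization the previous inequality becomes $\norm{a_0(\phi^*L)} \ge 1/\slope L$ uniformly in $\phi$, and Proposition~\ref{prop:slope-bound} immediately yields $\norm{a_0(\phi^*L)} \ge e^{-\poly_n(\delta_L)}$, giving the same lower bound on the infimum.

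The only obstacle is the minor bookkeeping associated with the normalization convention. If one prefers to avoid this, the same effective semialgebraic geometry argument used for Proposition~\ref{prop:slope-bound} applies directly to the scale-invariant expression $\inf_\phi \norm{a_0(\phi^*L)}/\max_{j\ge 1}\norm{a_j(\phi^*L)}$: it is a semialgebraic function of the coefficients of $L$ that is strictly positive (precisely because $\slope L$ is finite for Fuchsian $L$), so the effective bounds from \cite{basu:bounds} give a polynomial estimate of the required form. Either way, no new analytic content is required beyond what is used in the proof of Proposition~\ref{prop:slope-bound} itself, so the main obstacle was essentially already overcome there.
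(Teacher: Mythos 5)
There is a genuine gap, and it sits exactly where you wave it away as ``minor bookkeeping.'' The slope bound is scale-invariant: it controls the ratios $\norm{a_j(\phi^*L)}/\norm{a_0(\phi^*L)}$ and therefore cannot, by itself, yield a lower bound on the absolute quantity $\norm{a_0(\phi^*L)}$. Your normalization $\max_{j\ge1}\norm{a_j(\phi^*L)}=1$ replaces $\phi^*L$ by a scalar multiple of it, and the inequality $\norm{a_0}\ge 1/\slope L$ you derive holds only for that rescaled operator. The proposition, however, concerns the canonical (chain-rule) pullback: this is what is needed in the proof of Theorem~\ref{thm:fuchs-growth}, where one divides the pulled-back equation by $\hat a_0$ and must control $b/\hat a_0$ with $b$ fixed. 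To transfer your bound back to the canonical pullback you would need to know that the rescaling factor, i.e.\ $\max_{j\ge1}\norm{a_j(\phi^*L)}$ for the canonical pullback, is itself bounded below by $e^{-\poly_n(\delta_L)}$ uniformly in $\phi$ --- an absolute, non-scale-invariant lower bound of precisely the type the proposition asserts, so the argument is circular. Your fallback has the same problem: the scale-invariant expression $\inf_\phi\norm{a_0(\phi^*L)}/\max_{j\ge1}\norm{a_j(\phi^*L)}$ is essentially $1/\slope L$, not the quantity in the statement, and for the actual quantity the strict positivity that the effective semialgebraic machinery takes as input is exactly what has not yet been established.

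The missing idea, which is the real content of the paper's proof, is to bring in the auxiliary operator $L':=L+1$, which is again Fuchsian and satisfies $\phi^*L'=\phi^*L+1$ for the canonical pullback. If $\norm{a_0(\phi^*L)}$ could be made arbitrarily small, then by finiteness of $\slope L$ all the $\norm{a_j(\phi^*L)}$ would be arbitrarily small, so $\norm{a_n(\phi^*L')}=\norm{a_n(\phi^*L)+1}$ would be close to $1$ while $\norm{a_0(\phi^*L')}=\norm{a_0(\phi^*L)}$ stays small, contradicting the finiteness of $\slope L'$. The constant operator $1$, which is preserved by pullback, is what anchors the absolute scale --- and this anchoring is exactly what your renormalization destroys. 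Once positivity is known, the effective bound does follow by the same semialgebraic argument as in Proposition~\ref{prop:slope-bound}, so that part of your plan is fine.
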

\begin{proof}
  We first prove that the infimum is positive. Assume the
  contrary. Then we may choose $\phi$ such that $\norm{a_0(\phi^*L)}$
  is arbitrarily small. By boundedness of $\slope L$ this means that
  $\norm{a_j(\phi^*L)}$ is also arbitrarily small. Now the operator
  $L':=L+1$ is also Fuchsian, and
  $\norm{a_0(\phi^*L')}=\norm{a_0(\phi^*L)}$ is arbitrarily small
  while $\norm{a_n(\phi^*L')}=\norm{1+a_n(\phi^*L)}$ is arbitrarily
  close to $1$. This contradicts the boundedness of $\slope L'$. The
  effective bound is then obtained in the same way as in
  Proposition~\ref{prop:slope-bound}.
\end{proof}

We will also need the following simple lemma.

\begin{Lem}\label{lem:rational-norm-bound}
  Let $r$ be a rational function, and $D$ denote the unit disc. If $r$
  has no poles in $D^{1/2}$ then
  \begin{equation}
    \max_{z\in D}|r(z)| \le e^{O(\deg r)} \norm{r},
  \end{equation}
  and if $r$ has no zeros in $D^{1/2}$ then
  \begin{equation}
     \min_{z\in D}|r(z)| \ge e^{-O(\deg r)}\norm{r}.
  \end{equation}
\end{Lem}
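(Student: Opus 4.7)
The plan is to write $r=p/q$ in reduced form so that $\norm{r}=\norm{p}/\norm{q}$ and both $\deg p,\deg q\le\deg r$, and to observe that the two claimed bounds are dual: applying the first bound to $1/r=q/p$ (whose poles are exactly the zeros of $r$, and which satisfies $\norm{1/r}=1/\norm{r}$) immediately yields the second. So it suffices to establish the maximum estimate.

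For the numerator side, the triangle inequality gives $|p(z)|\le\norm{p}$ on $D$ with no further work. The content of the lemma is thus a lower bound for $|q(z)|$ on $D$, assuming $q$ has no zeros in $D^{1/2}$. I would factor $q(z)=c\prod_j(z-z_j)$ with all $|z_j|\ge 2$. For any $z\in D$ one has $|z-z_j|\ge|z_j|-1\ge|z_j|/2$, and therefore
\begin{equation}
  |q(z)|\ge |c|\prod_j\frac{|z_j|}{2}=2^{-\deg q}\,|c|\prod_j|z_j|.
\end{equation}
On the other hand, the classical Landau inequality comparing the $\ell_1$-norm of a polynomial with its Mahler measure yields $\norm{q}\le 2^{\deg q}M(q)$, and since all $|z_j|\ge 1$ the Mahler measure is $M(q)=|c|\prod_j|z_j|$. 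Combining the two estimates gives $|q(z)|\ge 2^{-2\deg q}\norm{q}$ for every $z\in D$, and hence
\begin{equation}
  \max_{z\in D}|r(z)|\le\frac{\norm{p}}{2^{-2\deg q}\norm{q}}=2^{2\deg q}\norm{r}=e^{O(\deg r)}\norm{r}.
\end{equation}

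There is essentially no obstacle here: the only nontrivial input is the Landau/Mahler comparison $\norm{q}\le 2^{\deg q}M(q)$, which is standard. The small technical point worth checking is that the factor of $2$ in the hypothesis $D^{1/2}$ is used in two essentially independent ways, namely to ensure $|z_j|/(|z_j|-1)\le 2$ and to keep all roots outside the closed unit disc so that the Mahler measure is just $|c|\prod|z_j|$; any ratio strictly greater than $1$ would suffice at the cost of worsening the implicit constant in $O(\deg r)$.
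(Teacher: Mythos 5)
Your proof is correct and follows essentially the same route as the paper's: write $r=p/q$ in reduced form, bound the numerator trivially on $D$, reduce everything to a lower bound for $|q|$ on $D$ under the hypothesis that the zeros of $q$ lie outside $D^{1/2}$, and obtain the minimum estimate by applying the maximum estimate to $1/r$. The only difference is that the paper cites an external lemma for the key bound $|q(z)|\ge e^{-O(\deg q)}\norm{q}$ on $D$, whereas you prove it directly from the factorization of $q$ together with the Landau--Mahler comparison $\norm{q}\le 2^{\deg q}M(q)$; this is a correct, self-contained substitute for the citation.
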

\begin{proof}
  Without loss of generality $\norm{r}=1$. Write $r=p/q$ with $p,q$
  polynomials and $\norm{p}=\norm{q}=1$. Suppose $q$ has no zeros in
  $D^{1/2}$. Then
  \begin{equation}
    |q(z)| \ge e^{-O(\deg q)} \text{ for every }z\in D
  \end{equation}
  by e.g. \cite[Lemma~7]{me:polyfuchs}. Since $|p(z)|$ is bounded by
  $1$ for $z\in D$, the upper bound on $r(z)$ follows. The lower bound
  follows by repeating the above for $1/r$.
\end{proof}

We now come to our main theorem. Below if $D=D_r(t_0)$ is a disc we
call $z=(t-t_0)/r$ a natural coordinate on $D$.

\begin{Thm}\label{thm:fuchs-growth}
  Let $L$ be a Fuchsian operator as above, defined over a number
  field. Let $D=D_r(t_0)$ be a disc with
  $D^{1/2}\subset\C\setminus\Sigma$ and $z$ a natural coordinate on
  $D$. Consider the equation $Lf=b$ where $b$ is defined in $D^{1/2}$
  and bounded by $B$ there. Then for $t_1\in D$,
  \begin{equation}
    \norm{j^n_zf(t_1)} \le e^{c_L} \big(B+\norm{j^n_zf(t_0)}\big), \qquad c_L:=e^{\poly_n(\delta_L)}.
  \end{equation}
  In particular
  \begin{equation}
    \norm{j^n_tf(t_1)} \le \max(r,1/r)^n e^{c_L} \big(B+\norm{j^n_tf(t_0)}\big).
  \end{equation}
\end{Thm}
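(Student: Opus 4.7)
The plan is to pull $L$ back through the affine map $\phi\colon z\mapsto t_0+rz$, so that the unit $z$-disc corresponds to $D$ and the $z$-disc of radius $2$ corresponds to $D^{1/2}$. The pulled-back operator $\phi^*L$ is again Fuchsian, and on the unit $z$-disc I will divide by its leading coefficient to obtain a monic operator $\tilde L$, to which the elementary Lemma~\ref{lem:scalar-gronwall} applies once the new inhomogeneous term $b/a_0(\phi^*L)$ is controlled.

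The core point is that both $\slope L$ and the infimum in Proposition~\ref{prop:leading-term-bound} are $\Aut(\P^1)$-invariant quantities, so for our particular $\phi$ we obtain the free bounds
\[ \angle(\phi^*L)\le\slope L=e^{\poly_n(\delta_L)},\qquad \norm{a_0(\phi^*L)}\ge e^{-\poly_n(\delta_L)}, \]
which together control the reduced norm of each rational function $a_j/a_0$ by $e^{\poly_n(\delta_L)}$. By the hypothesis $D^{1/2}\cap\Sigma=\emptyset$, both $a_j/a_0$ and $1/a_0(\phi^*L)$ are holomorphic on the radius-$2$ $z$-disc, so Lemma~\ref{lem:rational-norm-bound} promotes these norm bounds to uniform sup bounds on the closed unit disc, giving
\[ A:=\max_{j,\,|z|\le 1}|a_j/a_0|\le e^{\poly_n(\delta_L)}, \qquad \min_{|z|\le 1}|a_0(\phi^*L)|\ge e^{-\poly_n(\delta_L)}. \]
Consequently the inhomogeneous term of $\tilde L f=b/a_0$ is bounded on the unit disc by $\tilde B:=B\cdot e^{\poly_n(\delta_L)}$.

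Feeding $A$ and $\tilde B$ into Lemma~\ref{lem:scalar-gronwall} on the unit $z$-disc yields $\norm{j_z^n f(t_1)}\le e^{O_n(A)}\bigl(O_n(\tilde B)+\norm{j_z^n f(t_0)}\bigr)$, and the doubly-exponential factor $e^{O_n(A)}=e^{e^{\poly_n(\delta_L)}}$ collapses into the claimed constant $c_L$, easily absorbing the polynomial prefactor hidden in $\tilde B$. The second estimate follows by the chain rule $\partial_z=r\partial_t$: converting between $z$-jets and $t$-jets at both endpoints produces an overall factor of $\max(r,1/r)^n$.

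The main obstacle is controlling the leading coefficient $a_0(\phi^*L)$: an arbitrary disc $D$ is parametrized by $(t_0,r)$ whose heights need not be bounded in terms of $\delta_L$, so $\phi^*L$ itself may have uncontrolled arithmetic complexity and the naive Gronwall applied to $L/a_0$ on $D$ would blow up. Proposition~\ref{prop:leading-term-bound} sidesteps this entirely by packaging the estimate we need as an $\Aut(\P^1)$-invariant of $L$ itself; once this and Proposition~\ref{prop:slope-bound} are in hand, the remaining steps are routine.
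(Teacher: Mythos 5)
Your proposal is correct and follows essentially the same route as the paper's proof: pull back to the natural coordinate $z$, invoke Propositions~\ref{prop:slope-bound} and~\ref{prop:leading-term-bound} for the $\Aut(\P^1)$-invariant bounds on $\angle(\phi^*L)$ and $\norm{a_0(\phi^*L)}$, divide by the leading term, convert the norm bounds into sup bounds on the unit disc via Lemma~\ref{lem:rational-norm-bound}, and conclude with the Gronwall estimate of Lemma~\ref{lem:scalar-gronwall}, recovering the jet rescaling factor $\max(r,1/r)^n$ at the end. Your explicit remark that the invariance of the slope and leading-coefficient infimum is what neutralizes the uncontrolled arithmetic complexity of $(t_0,r)$ is exactly the point of the argument.
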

\begin{proof}
  Note that $j^n_zf$ is obtained from $j^n_tf$ by
  multiplying the $j$-th coordinate by $r^j$, so the second estimate
  follows from the first.
  
  Let $\hat L$ denote the pullback of $L$ to the $z$-coordinate and
  set $\hat a_j=a_j(\hat L)$. By
  Propositions~\ref{prop:slope-bound}~and~\ref{prop:leading-term-bound},
  we have
  \begin{equation}\label{eq:hatL-bounds}
    \angle \hat L, \norm{\hat a_0}^{-1} = e^{\poly(\delta_L)}.
  \end{equation}
  Dividing by the leading term we have an equation
  \begin{equation}
    (\partial_z^n+\frac{\hat a_1}{\hat a_0}\partial_z^{n-1}+\cdots+\frac{\hat a_n}{\hat a_0})f=b/\hat a_0.
  \end{equation}
  The claim will now follow from Lemma~\ref{lem:scalar-gronwall} once
  we establish suitable bounds for the coefficients and for the right
  hand side. These bounds follow from~\eqref{eq:hatL-bounds} and
  Lemma~\ref{lem:rational-norm-bound} applied to obtain a lower bound
  for $\hat a_0$ (which has no zeros in $D^{1/2}$) and an upper bound
  for $\hat a_j$ (which has no poles in $D^{1/2}$).
\end{proof}

Theorem~\ref{thm:fuchs-growth} allows one to obtain a polynomial bound
on the growth of solutions for equations $Lf=b$, assuming $b$ has
polynomial growth. To see this consider a fixed $t_0\in\C$ and an
arbitrary $t_1\in\C$, say of distance $\delta$ to $\Sigma$. Connect
$t_0$ to $t_1$ by a sequence of $O(\log\delta)$ discs $D_i$ with
$D_i^{1/2}\subset\C\setminus\Sigma$ such that the sequence of radii is
$r_i$ satisfies e.g. $1/2<r_i/r_{i+1}<2$. It is a simple exercise in
plane geometry to check that this can always be achieved. Then
applying Theorem~\ref{thm:fuchs-growth} consecutively for the discs
$D_i$, and assuming $b$ is bounded by $\poly(1/\delta)$ throughout
gives an estimate on the branch of $f$ at $t_1$ obtained by analytic
continuation along the $D_i$, namely
\begin{equation}
  f(t_1) = \poly_L(1/\delta)\norm{j^n_tf(t_0)}.
\end{equation}
Here one should use the statement in the natural coordinate $z$,
noting that by our assumption on the radii the distortion in jets when
switching from coordinate $z_i$ to $z_{i+1}$ is bounded by $2^n$ at
each step. If one uses the statement with the $t$-coordinate then one
gets the slightly larger $\delta^{O(\log\delta)}$ term (which is still
suitable for our purposes in this paper).

\begin{Rem}
  The geometric requirements on the chains of discs $D_i$ are not
  arbitrary, they represent an actual obstruction. For instance,
  consider the function
  \begin{equation}
    f(x) = \sqrt{\e^2+x^2}+x.
  \end{equation}
  As an algebraic function, this satisfies a Fuchsian equation
  $L_\e f=0$ with singularities at $\{\e,-\e,\infty\}$. For $\e\ll1$,
  one branch of this function becomes uniformly small while the other
  tends uniformly to $2x$. On the other hand the slope of the
  operators $L_\e$ is uniformly bounded as a function of $\e$, for
  instance by the results of \cite{me:inf16} (or by direct computation
  for this simple case). However, to analytically continue from one of
  these branches to the other, one must at some point pass between
  $-\e$ and $\e$. To do this some of the discs $D_i$ would have to be
  of size $O(\e)$, and this explains why one cannot obtain an estimate
  for one branch in terms of the other branch which is uniform in
  $\e$.
\end{Rem}

\bibliographystyle{plain} \bibliography{nrefs}

\end{document}